\definecolor{darkbrown}{rgb}{.5,.2,0}
\setlist[enumerate]{%
topsep=.5ex plus0.5ex minus0.2ex,  
parsep=.5ex plus0.3ex minus0.1ex,  
itemsep=.5ex plus0.5ex minus0.2ex, 
leftmargin=3ex,    
itemindent=0ex,    
labelwidth=*,      
labelsep=1ex,      
listparindent=0ex, 
rightmargin=0ex,   
align=left,        
%
font=\rmfamily,
label=\arabic*)}
\theoremstyle{plain}
\newtheorem{Theorem}{Thm}[section]
\newtheorem{Thm}[Theorem]{Theorem}
\newtheorem{Pro}[Theorem]{Proposition}
\newtheorem{Lem}[Theorem]{Lemma}
\newtheorem{Cor}[Theorem]{Corollary}
\theoremstyle{definition}
\newtheorem{Exa}[Theorem]{Example}
\newtheorem{Rem}[Theorem]{Remark}
\newtheorem{Ope}[Theorem]{Open Problem}
\newcommand{\C}{\mathbb{C}}
\newcommand{\N}{\mathbb{N}}
\newcommand{\R}{\mathbb{R}}
\newcommand{\cA}{\mathcal{A}}
\newcommand{\cB}{\mathcal{B}}
\newcommand{\cP}{\mathcal{P}}
\DeclareMathOperator\aff{aff}
\DeclareMathOperator\cc{cc} 
\DeclareMathOperator\cone{cone} 
\DeclareMathOperator\ri{ri}
\DeclareMathOperator\rai{rai}
\DeclareMathOperator\sgn{sgn}
\DeclareMathOperator\spt{spt}
\DeclareMathOperator\dr{\mathrm{d}}
\begin{document}
\title{A note on faces of convex sets}
\author{Stephan Weis}
\begin{abstract}
The faces of a convex set owe their relevance to an interplay between 
convexity and topology that is systematically studied in the work of 
Rockafellar. Infinite-dimensional convex sets are excluded from this 
theory as their relative interiors may be empty. Shirokov and the 
present author answered this issue by proving that every point in a 
convex set lies in the relative algebraic interior of the face it 
generates. This theorem is proved here in a simpler way, connecting 
ideas scattered throughout the literature. This article summarizes 
and develops methods for faces and their relative algebraic interiors 
and applies them to spaces of probability measures.
\end{abstract}
\date{29 September 2024}
\subjclass[2020]{52A05, 46E27}
\keywords{Convex set,
extreme set,
face,
face generated by a point,
relative interior,
relative algebraic interior,
Radon-Nikodym derivative,
convex core}
%
%
%
%
%
\maketitle
%
%
\hfill
Dedicated to R.\,Tyrrell~Rockafellar on His Ninetieth Birthday
\par
%
%
\section{Introduction}
A \emph{face} of a convex set $K$ in a real vector space $V$ is a convex 
subset of $K$ including every pair of points in $K$ that are the endpoints 
of some open segment intersected by this convex subset. One-point faces 
are in a one-to-one correspondence with extreme points and are useful in 
functional analysis 
\cite{Alfsen1971,Barvinok2002,Bourbaki1987,Holmes1975,Werner2018}. Larger 
faces play a minor role beyond finite dimensions. A notable exception 
is Alfsen and Shultz' work on operator algebras \cite{AlfsenShultz2001}.
\par
Faces of finite-dimensional convex sets owe their success to an interplay 
of convexity and topology that appears in Grünbaum's work
\cite[Sec.~2.4]{Gruenbaum2003} and is systematically studied by Rockafellar 
\cite{Rockafellar1970}. A central notion is the \emph{relative interior} 
$\ri(K)$ of $K$, the interior of $K$ in the induced topology on the affine 
hull $\aff(K)$ of $K$. Lacking monotonicity, the operator $\ri$ is not the 
interior operator of a topology. Imagine a side $K$ of a triangle $L$ in 
the Euclidean plane $V$; although $K\subset L$ holds, $\ri(K)$ and $\ri(L)$ 
are disjoint and nonempty. Rockafellar and Wets 
\cite[p.~75]{RockafellarWets2009} stress that the closure of $\ri(K)$ 
includes $K$ in a Euclidean space $V$\!. Notably, $K\neq\emptyset$ implies 
$\ri(K)\neq\emptyset$.
\par
The last assertion is false in a Hausdorff topological vector space, as the 
interior of $K$ can be empty and the affine hull of $K$ can still be equal to $V$\!. 
\begin{enumerate}
\item
A first example is the convex set $K=\{x\in V:\ell(x)> 0\}$ defined by a 
discontinuous linear functional $\ell:V\to\R$, because $\ker(\ell)$ 
is dense in $V$\!. 
\item
The closed convex set $K=\{f\in V : \mbox{$f\geq 0$ a.e.} \}$ in the
Banach space $V=L^p([0,1])$ of $p$-integrable real functions, 
$1\leq p<\infty$, has no interior points. Every function $f\in K$ is the 
limit of $f\cdot 1_{(1/n,1]}-1_{[0,1/n]}$ as $n\to\infty$, where $1_A$ 
denotes the indicator function on a measurable set $A$. 
\end{enumerate}
Avoiding empty interiors, Borwein and Goebel \cite[Thm.~2.8]{BorweinGoebel2003} 
study modified interiors of convex sets in Banach spaces. This article, instead, 
focuses on those faces of a convex set that happen to have nonempty relative 
interiors. 
\par
Among all vector topologies on $V$\!, the greatest interior (with respect to 
inclusion) of a convex set is achieved by the 
\emph{finest locally convex topology} $\mathfrak{T}_\omega(V)$. We deduce this
from the continuity of the map $\R\to V$\!, $\lambda\mapsto x+\lambda v$, for 
every $x,v\in V$\!, bearing in mind that a point $x\in K$ lies in the interior 
of $K$ for $\mathfrak{T}_\omega(V)$, if and only if for every line $g$ in $V$ 
containing $x$, the intersection $g\cap K$ includes an open segment 
containing $x$ \cite[II.26]{Bourbaki1987}. The set of all such points $x$ is 
called the \emph{algebraic interior} \cite{Barvinok2002,Clason2020} or ``core'' 
\cite{Clason2020,Holmes1975} of~$K$. 
\par
Example 1) above ceases to exist in the topology $\mathfrak{T}_\omega(V)$, 
which renders every linear functional continuous \cite[II.26]{Bourbaki1987}. 
The empty interior of Example~2) persists in the topology 
$\mathfrak{T}_\omega(V)$, as the interior of every closed convex subset of 
a Banach space is the algebraic interior of that set 
\cite[p.~46]{Clason2020}. Another example with empty algebraic interior is
the set of univariate polynomials with real coefficients and positive 
leading coefficients, which Barvinok examines in several revealing 
exercises \cite[III.1.6]{Barvinok2002}. By contrast, if $\dim(V)<\infty$ 
then $\mathfrak{T}_\omega(V)$ is the Euclidean topology and, again, 
$K\neq\emptyset$ implies $\ri(K)\neq\emptyset$. 
\par
The relative interior $\ri(K)$ for the topology $\mathfrak{T}_\omega(V)$ 
can be described in terms of the Euclidean topology on a line. 
Shirokov and the present author \cite{WeisShirokov2021} define the 
\emph{relative algebraic interior} $\rai(K)$ of $K$ as the set of points 
$x\in K$ such that for every line $g$ in $\aff(K)$ containing $x$, 
the intersection $g\cap K$ includes an open segment containing $x$. This 
definition differs from that of the algebraic interior just in the affine 
space confining the lines. Holmes \cite{Holmes1975} calls $\rai(K)$ the 
``intrinsic core''. We prove $\ri(K)=\rai(K)$ for $\mathfrak{T}_\omega(V)$ 
in Sec.~\ref{sec:fintest-loc-conv-topo}. Topological vector spaces and 
relative interiors are ignored subsequently; instead, relative algebraic 
interiors are used consistently.
\par
In their studies of constrained density operators \cite{WeisShirokov2021}, 
Shirokov and this author employ the \emph{face of $K$ generated by} 
a point $x\in K$. This is the smallest face of $K$ containing $x$, which we 
denote by $F_K(x)$. A basic property is that $x\in\rai(F_K(x))$ holds 
for all $x\in K$.
\par
This key result is deduced in Sec.~\ref{sec:face-generated-by-a-point} from two 
elementary assertions, whereas our prior proof unnecessarily employs the 
Kuratowski-Zorn lemma. The first elementary assertion (Prop.~\ref{pro:Alfsen}) 
is Alfsen's formula \cite[p.~121]{Alfsen1971} 
\begin{equation}\label{eq:Alfsen}
F_K(x)=
\left\{ y\in K \mid \exists\epsilon>0 \colon x+\epsilon(x-y)\in K \right\}\,\text{.}
\end{equation}
The second one (Prop.~\ref{pro:BorweinGoebel}) is Borwein and Goebel's
observation \cite{BorweinGoebel2003} that a point $x\in K$ lies in 
$\rai(K)$ if and only if for all $y\in K$ there is $\epsilon>0$ such that 
$x+\epsilon(x-y)\in K$. A point $x$ satisfying the latter proposition is 
called an \emph{internal point} \cite{Dubins1962} or a 
``relatively absorbing point'' \cite{BorweinGoebel2003} of $K$.
\par
Consequences of $x\in\rai(F_K(x))$, $x\in K$, are organized roughly as 
follows. A review in Sec.~\ref{sec:prior-work} and new findings in 
Sec.~\ref{sec:novel-results-I} extend selected results from Secs.~6 
and~18 of Rocka\-fellar's monograph \cite{Rockafellar1970}. 
Secs.~\ref{sec:novel-results-II} and~\ref{sec:Dubins-Face} refer to 
\mbox{Secs.~2--4} of Dubins' paper \cite{Dubins1962} on infinite-dimensional 
convexity.
\par
The methods of this paper are suitable to study the space of probability 
measures on a measurable space. The face generated by a probability 
measure is described in Sec.~\ref{sec:example-prob-measures}. The face 
generated by a Borel probability measure $\mu$ on $\R^d$ is related to 
Csisz{\'a}r and Mat{\'u}{\v s}' notion \cite{CsiszarMatus2001} of the 
\emph{convex core} of $\mu$ in Sec.~\ref{sec:example-convex-cores}.
Sec.~\ref{sec:example-countable} studies measures on the set of natural numbers.
\par
%
%
%
\section{Main definitions}
\label{sec:definitions}
Throughout this paper, $V$ denotes a real vector space and $K$ a convex 
subset of $V$\!, unless stated otherwise. If $x\neq y$ are distinct 
points of $V$\!, then 
\begin{align*}
(x,y) &= \left\{(1-\lambda)x+\lambda y\colon \lambda\in(0,1)\right\}\\
\text{resp.}\quad 
[x,y] &= \left\{(1-\lambda)x+\lambda y\colon \lambda\in[0,1]\right\}
\end{align*}
is called the \emph{open segment} resp.\ \emph{closed segment} with 
endpoints $x,y$. Each of the symbols $(x,x)=[x,x]$ denotes the singleton 
$\{x\}$ containing $x\in V$\!.
\par
An \emph{extreme set} \cite{AliprantisBorder2006} of $K$ is a subset $E$ of 
$K$ including the closed segment $[x,y]$ for all points $x\neq y$ in $K$ 
for which the open segment $(x,y)$ intersects $E$. A point $x\in K$ is an 
\emph{extreme point} of $K$ if $\{x\}$ is an extreme set of $K$. A 
\emph{face} of $K$ is a convex extreme set of $K$. Clearly, any union 
or intersection of extreme sets of $K$ is an extreme set of $K$. 
Since any intersection of convex sets is convex, any intersection of 
faces of $K$ is a face of $K$. In particular, the intersection of all 
faces containing a point $x\in K$ is a face of $K$, which is called the 
\emph{face of $K$ generated by $x$}, and which is denoted by $F_K(x)$.
\par
The \emph{algebraic interior} of $K$ is the set of all points $x$ in $K$ 
such that for every line $g$ in $V$ containing $x$, the intersection 
$g\cap K$ includes an open segment containing $x$ \cite{Barvinok2002}. 
The convex set $K$ is \emph{algebraically open} if it is equal to its
algebraic interior. The \emph{relative algebraic interior} $\rai(K)$ of 
$K$ is the set of all points $x$ in $K$ such that for every line $g$ in 
$\aff(K)$ containing $x$, the intersection $g\cap K$ includes an open 
segment containing $x$ \cite{WeisShirokov2021}. The convex set $K$ is 
\emph{relative algebraically open} if $K=\rai(K)$. A point $x\in K$ is an 
\emph{internal point} \cite{Dubins1962} of $K$ if for all $y\neq x$ in $K$ 
there is $\epsilon>0$ such that $x+\epsilon(x-y)\in K$.
\par
%
%
\section{The finest locally convex topology}
\label{sec:fintest-loc-conv-topo}
One of the innovations of this paper is that Thm.~\ref{thm:WeisShirokov} is a 
theorem of Zermelo-Fraenkel set theory. However, the use of the relative 
interior in a topological vector space creates a new dependence on the axiom 
of choice by Rem.~\ref{rem:choice}. Coro.~\ref{cor:ri=rai} allows us to avoid
this problem simply by dismissing topological vector spaces altogether, and 
relying on relative algebraic interiors of convex sets instead. This comes at 
the modest price that some theorems of topological vector spaces require a 
proof in this paper. For example, the theorem that the interior of a convex 
set is convex \cite[II.14]{Bourbaki1987} would make Lemma~\ref{lem:ri-convex} 
superfluous. The assertion that the interior of a set is open would essentially 
supersede Thm.~\ref{thm:ri2=ri} (up to questions regarding affine hulls).
\par
A \emph{fundamental system of neighborhoods} of a point $x$ in a topological 
space is any set $\mathfrak{S}$ of neighborhoods of $x$ such that for 
each neighborhood $U$ of $x$ there is a neighborhood $W\in\mathfrak{S}$ such 
that $W\subset U$. A topological real vector space is \emph{locally convex} 
if there exists a fundamental system of neighborhoods of $0$ that are convex 
sets \cite[II.23]{Bourbaki1987}.
\par
It is known that a convex subset $C\subset V$ is open for the finest locally 
convex topology $\mathfrak{T}_\omega(V)$ on $V$ if and only if $C$ is 
algebraically open; a subset $U\subset V$ is open for 
$\mathfrak{T}_\omega(V)$ if and only if $U$ is a union of convex open subsets 
of $V$ \cite[II.26]{Bourbaki1987}. Every nonempty affine subspace $A\subset V$ 
is isomorphic to the vector space of translations $A-A=\{x-y\mid x,y\in A\}$ by 
means of the affine isomorphism $\alpha:x\mapsto x-x_0$ defined by some point 
$x_0\in A$. A topology is defined on $A$ for which a subset $U\subset A$ is open 
if and only if $\alpha(U)$ is open for $\mathfrak{T}_\omega(A-A)$. This topology 
does not depend on $x_0$ and is denoted by $\mathfrak{T}_\omega(A)$. The topology 
induced on $A$ by $\mathfrak{T}_\omega(V)$ is denoted by 
$\mathfrak{T}_\omega(V)|A$. By definition, a subset $U\subset A$ is open for 
$\mathfrak{T}_\omega(V)|A$ if and only if there exists an open set $W$ for 
$\mathfrak{T}_\omega(V)$ such that $U=A\cap W$. Let $S,S'\subset V$ be linear 
subspaces. Then $S'$ is a \emph{complementary subspace of $S$ in $V$} if
$S\cap S'=\{0\}$ and if for all $x\in V$ there is $s\in S$ and $s'\in S'$ such
that $x=s+s'$.
\par
\begin{Pro}\label{pro:char-ri-omega}
If $A$ is an affine subspace of $V$\!, then 
$\mathfrak{T}_\omega(A)=\mathfrak{T}_\omega(V)|A$.
\end{Pro}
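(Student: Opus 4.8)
The plan is to reduce the statement to the case of a linear subspace and then prove the two inclusions separately, using only the characterisation from the excerpt that a convex set is open for $\mathfrak{T}_\omega$ precisely when it is algebraically open, together with the fact that every open set is a union of convex open sets. First I would fix a point $x_0\in A$ and set $S:=A-A$, so that $A=x_0+S$ and $\alpha\colon A\to S$, $x\mapsto x-x_0$, is the defining homeomorphism of $\mathfrak{T}_\omega(A)$ onto $\mathfrak{T}_\omega(S)$. Since translating a line yields a line, the translation $\tau\colon v\mapsto v-x_0$ carries convex algebraically open sets to convex algebraically open sets and is therefore a homeomorphism of $(V,\mathfrak{T}_\omega(V))$. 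As $\tau(A\cap W)=S\cap(W-x_0)$, a set $U\subseteq A$ is open for $\mathfrak{T}_\omega(V)|A$ if and only if $U-x_0=S\cap(W-x_0)$ for some $\mathfrak{T}_\omega(V)$-open $W$; combined with the definition of $\mathfrak{T}_\omega(A)$ this reduces the claim to the equality $\mathfrak{T}_\omega(S)=\mathfrak{T}_\omega(V)|S$ for the linear subspace $S$.

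For the inclusion $\mathfrak{T}_\omega(V)|S\subseteq\mathfrak{T}_\omega(S)$ I would take a $\mathfrak{T}_\omega(V)$-open set $W$, write it as a union $\bigcup_i D_i$ of convex algebraically open subsets of $V$, and observe that $S\cap W=\bigcup_i(S\cap D_i)$. Each $S\cap D_i$ is convex, and it is algebraically open in $S$ because any line of $S$ through a point of $S\cap D_i$ is also a line of $V$, so algebraic openness of $D_i$ in $V$ supplies the required open segment inside $D_i$, which lies in $S\cap D_i$ since the whole line lies in $S$. Hence $S\cap W$ is a union of convex $\mathfrak{T}_\omega(S)$-open sets and is therefore $\mathfrak{T}_\omega(S)$-open. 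This direction is routine.

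The opposite inclusion is the heart of the matter, and it is exactly here that the notion of a complementary subspace enters. I would choose a complementary subspace $S'$ of $S$ in $V$ and let $\pi\colon V\to S$ be the linear projection along $S'$. The key point is that $\pi$ is continuous from $\mathfrak{T}_\omega(V)$ to $\mathfrak{T}_\omega(S)$: for a convex algebraically open $C\subseteq S$ one has $\pi^{-1}(C)=C+S'$, and I would verify that this set is algebraically open in $V$ by projecting an arbitrary line; if the direction of the line has zero $S$-component the whole line projects onto a single point of $C$, and otherwise algebraic openness of $C$ yields an open segment about the base point whose image lies in $C$. Since $\pi^{-1}$ preserves unions, $\pi$ pulls $\mathfrak{T}_\omega(S)$-open sets back to $\mathfrak{T}_\omega(V)$-open sets. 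Finally, because $\pi$ restricts to the identity on $S$, every $\mathfrak{T}_\omega(S)$-open $U$ satisfies $U=S\cap\pi^{-1}(U)$, exhibiting $U$ as open for $\mathfrak{T}_\omega(V)|S$ and completing the proof. The main obstacle, and the only nontrivial input, is constructing this continuous retraction of $V$ onto $S$ — the passage from $S$ back into $V$, whose existence rests on a Hamel-basis argument; the forward inclusion and the line-by-line verifications of algebraic openness are straightforward.
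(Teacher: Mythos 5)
Your proof is correct and follows essentially the same route as the paper's: reduce to a linear subspace $S=A-A$ by translation, obtain one inclusion from the fact that intersecting convex algebraically open subsets of $V$ with $S$ yields convex algebraically open subsets of $S$ (the paper phrases this as the induced topology being locally convex, hence coarser than the finest one), and obtain the other from a complementary subspace $S'$, since your $\pi^{-1}(U)=U+S'$ is exactly the convex open set $U+B$ that the paper exhibits with $A\cap(U+B)=U$. You merely spell out the verifications the paper dismisses as ``clearly'' and ``easy to see''.
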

\begin{proof}
Using the isomorphism $A\cong A-A$, we assume that $0$ lies in $A$. 
Clearly, the induced topology $\mathfrak{T}_\omega(V)|A$ is locally convex, 
which shows that $\mathfrak{T}_\omega(A)$ is finer than 
$\mathfrak{T}_\omega(V)|A$. Conversely, let $U$ be a convex open subset of 
$A$ for the topology $\mathfrak{T}_\omega(A)$. Let $B$ be a complementary
subspace of $A$ in $V$\!. It is easy to see that the convex set 
$U+B=\{u+b:u\in U, b\in B\}$ is open for $\mathfrak{T}_\omega(V)$ and that 
$A\cap (U+B)=U$ holds. This shows that $\mathfrak{T}_\omega(V)|A$ is finer 
than $\mathfrak{T}_\omega(A)$ and completes the proof.
\end{proof}
\begin{Rem}\label{rem:choice}
The complementary subspace used in Prop.~\ref{pro:char-ri-omega} exist by the 
axiom of choice. Conversely, the existence of a complementary subspace for all 
subspaces of all vector spaces over the reals (or over any other field) 
implies a weak version of the axiom of choice \cite[Lemma~2]{Bleicher1964} 
that is equivalent to the full axiom of choice in Zermelo-Fraenkel set theory 
\cite[Thm.~9.1]{Jech2008}.
\end{Rem}
\begin{Cor}\label{cor:ri=rai}
We have $\ri(K)=\rai(K)$ for the topology $\mathfrak{T}_\omega(V)$. 
\end{Cor}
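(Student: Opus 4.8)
The plan is to prove $\ri(K) = \rai(K)$ for the topology $\mathfrak{T}_\omega(V)$ by reducing the relative interior in $V$ to an interior within the affine hull, where the characterization of $\mathfrak{T}_\omega$-open sets as algebraically open sets can be applied. First I would recall that by definition $\ri(K)$ is the interior of $K$ in the subspace topology $\mathfrak{T}_\omega(V)|\aff(K)$ induced on the affine hull $A := \aff(K)$. By Proposition~\ref{pro:char-ri-omega}, this induced topology coincides with $\mathfrak{T}_\omega(A)$, the intrinsic finest locally convex topology on $A$. Hence the relative interior $\ri(K)$ equals the $\mathfrak{T}_\omega(A)$-interior of $K$ viewed as a subset of $A$.

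Next I would invoke the cited fact \cite[II.26]{Bourbaki1987} that a convex set is open for the finest locally convex topology if and only if it is algebraically open. Since $K$ is convex, a point $x \in K$ lies in the $\mathfrak{T}_\omega(A)$-interior of $K$ precisely when $x$ lies in the algebraic interior of $K$ computed \emph{within} $A$. I would make this precise: the $\mathfrak{T}_\omega(A)$-interior of $K$ is the largest $\mathfrak{T}_\omega(A)$-open subset of $K$, and because $K$ is convex this largest open subset is exactly the algebraic interior of $K$ relative to $A$. The latter is, by the very definition in Sec.~\ref{sec:definitions}, the set of points $x \in K$ such that for every line $g$ in $A = \aff(K)$ through $x$, the intersection $g \cap K$ contains an open segment about $x$. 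That set is precisely $\rai(K)$.

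Assembling the chain, $\ri(K)$ (interior of $K$ in $\mathfrak{T}_\omega(V)|A$) equals the interior in $\mathfrak{T}_\omega(A)$ by Proposition~\ref{pro:char-ri-omega}, which equals the algebraic interior of $K$ within $A$ by the Bourbaki characterization, which equals $\rai(K)$ by definition. I expect the main obstacle to be the second step: one must verify carefully that the largest $\mathfrak{T}_\omega(A)$-open subset of the convex set $K$ coincides with the relative algebraic interior, rather than merely asserting the one-line equivalence ``open $\Leftrightarrow$ algebraically open'' for $K$ itself, since $K$ need not be open. The clean way to handle this is to note that $\rai(K)$ is itself convex (so that the equivalence applies to it) and algebraically open as a subset of $A$, hence $\mathfrak{T}_\omega(A)$-open and contained in $K$, giving $\rai(K) \subseteq \ri(K)$; conversely any $x \in \ri(K)$ has a convex open neighborhood in $A$ contained in $K$, which forces the open-segment condition along every line through $x$, giving $x \in \rai(K)$. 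This two-inclusion argument sidesteps the need to apply the convex-open criterion directly to $K$.
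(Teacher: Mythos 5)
Your proof is correct and follows essentially the same route as the paper: the paper's proof of Coro.~\ref{cor:ri=rai} is exactly the one-line reduction via Prop.~\ref{pro:char-ri-omega} combined with the Bourbaki fact \cite[II.26]{Bourbaki1987}, already cited in the introduction, that the $\mathfrak{T}_\omega$-interior of a convex set is its algebraic interior. The only caveat is that your ``clean'' two-inclusion variant invokes the convexity and relative algebraic openness of $\rai(K)$, which in the paper are Lemma~\ref{lem:ri-convex} and Thm.~\ref{thm:ri2=ri} and are established only later (no circularity, but a forward reference); quoting Bourbaki's statement for the \emph{interior} of a convex set, rather than only the open-iff-algebraically-open criterion, avoids this.
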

\begin{proof}
This follows immediately from Prop.~\ref{pro:char-ri-omega}.
\end{proof}
%
%
\section{Every point lies in the relative algebraic interior of the 
face it generates}
\label{sec:face-generated-by-a-point}
This section provides proofs for Alfsen's formula \eqref{eq:Alfsen}
describing the face generated by a point, and Borwein and Goebel's 
observation \cite{BorweinGoebel2003} that a point is an internal 
point of $K$ if and only if it lies in $\rai(K)$. An immediate 
corollary is Shirokov and the present author's theorem \cite{WeisShirokov2021} 
that every point lies in the relative algebraic interior of the face it 
generates. Let $x\in K$ and define
\begin{align*}
S_K(x) &= 
\left\{ y\in K \mid \exists\epsilon>0 \colon x+\epsilon(x-y)\in K \right\}
\,\text{,}\\
C_K(x) &= 
\left\{ y\in V \mid \exists\epsilon>0 \colon x-\epsilon(x-y)\in K \right\}
\,\text{,}\\
A_K(x) &= 
\left\{ y\in V \mid \exists\epsilon>0 \colon x\pm\epsilon(x-y)\in K \right\}
\,\text{.}
\end{align*}
\par
\begin{figure}
a)\includegraphics[height=3.9cm]{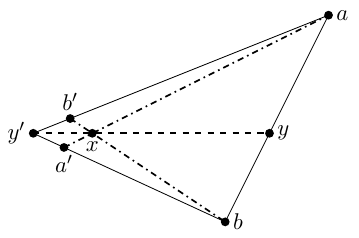}%
b)\includegraphics[height=3.9cm]{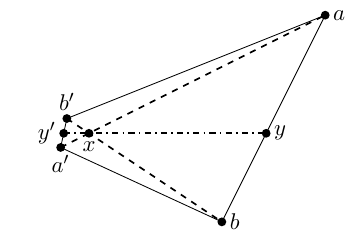}%
\caption{\label{fig:points-in-face-generated}%
Configurations (in the plane) for Prop.~\ref{pro:Alfsen}.
The set $S_K(x)$ is a) an extreme set and b) convex.}
\end{figure}%
\begin{Pro}[Alfsen]\label{pro:Alfsen}
For all $x\in K$ we have $F_K(x)=S_K(x)$.
\end{Pro}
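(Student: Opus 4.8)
The plan is to prove the two inclusions $F_K(x)\subseteq S_K(x)$ and $S_K(x)\subseteq F_K(x)$ separately, using that $F_K(x)$ is the intersection of all faces of $K$ containing $x$. For the first inclusion I would verify that $S_K(x)$ is itself a face of $K$ containing $x$; the second inclusion then follows by checking that $S_K(x)$ lies inside every face $F$ of $K$ with $x\in F$. All computations are organised around one reformulation: for $y\neq x$, the condition $y\in S_K(x)$ means exactly that the witness $z:=x+\epsilon(x-y)\in K$ places $x=\tfrac{1}{1+\epsilon}z+\tfrac{\epsilon}{1+\epsilon}y$ strictly inside the open segment $(y,z)$, where $y\neq z$ because $y\neq x$. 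The membership $x\in S_K(x)$ is immediate, since any $\epsilon>0$ works when $y=x$.

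The easier inclusion $S_K(x)\subseteq F_K(x)$ is handled by the extreme-set property alone. Given a face $F\ni x$ and a point $y\in S_K(x)$ with $y\neq x$, the reformulation puts $x\in F$ in the open segment $(y,z)$ whose endpoints $y,z$ lie in $K$; since $F$ is extreme, $[y,z]\subseteq F$, whence $y\in F$. Letting $F$ range over all faces containing $x$ gives $S_K(x)\subseteq F_K(x)$. For convexity of $S_K(x)$ (Figure~\ref{fig:points-in-face-generated}~b)), I would first record that if $x+\epsilon_0(x-y)\in K$ then $x+\epsilon(x-y)\in K$ for every $\epsilon\in[0,\epsilon_0]$, as this point is a convex combination of $x$ and $x+\epsilon_0(x-y)$. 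Taking $y_1,y_2\in S_K(x)$ with witnesses $\epsilon_1,\epsilon_2$ and setting $\epsilon:=\min\{\epsilon_1,\epsilon_2\}$, the identity $x+\epsilon(x-((1-t)y_1+ty_2))=(1-t)(x+\epsilon(x-y_1))+t(x+\epsilon(x-y_2))$ exhibits the required witness for any convex combination.

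The heart of the argument, and the step I expect to be the main obstacle, is showing that $S_K(x)$ is an extreme set, corresponding to Figure~\ref{fig:points-in-face-generated}~a). Suppose $u\neq v$ in $K$ and some $w=(1-s)u+sv$ with $s\in(0,1)$ lies in $S_K(x)$, with witness $p:=x+\delta(x-w)\in K$. I would produce a witness for $u$ explicitly by searching for $z=(1-t)p+tv$ of the form $x+\epsilon(x-u)$: forcing the $v$-coefficient to vanish determines $t=\tfrac{\delta s}{1+\delta s}\in(0,1)$, and the surviving coefficients then give $\epsilon=\tfrac{\delta(1-s)}{1+\delta s}>0$. Since $z$ is a convex combination of $p,v\in K$, it lies in $K$, so $u\in S_K(x)$, and the symmetric choice yields $v\in S_K(x)$; convexity of $S_K(x)$ then gives $[u,v]\subseteq S_K(x)$. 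The delicate point throughout is keeping the parameters strictly positive so that the witnesses remain genuine, and the explicit formulas for $t$ and $\epsilon$ are precisely what make this transparent. Combining the two inclusions completes the proof that $F_K(x)=S_K(x)$.
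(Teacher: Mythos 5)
Your proposal is correct and follows essentially the same route as the paper: both inclusions are obtained by showing that $S_K(x)$ is a face containing $x$ and that the extreme-set property of $F_K(x)$ forces $S_K(x)\subseteq F_K(x)$, and your coefficients $t=\delta s/(1+\delta s)$, $\epsilon=\delta(1-s)/(1+\delta s)$ for the extreme-set step are exactly the paper's $\xi_a,\epsilon_a$. The only (harmless) variation is in the convexity step, where you take the minimum of the two witnesses together with the monotonicity of $\epsilon\mapsto x+\epsilon(x-y)$ instead of the paper's weighted-harmonic-mean choice of $\epsilon$; both work.
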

\begin{proof}
Let $y$ be a point in $S_K(x)$ and $y\neq x$. Then there is $\epsilon>0$ such 
that $x$ lies in the open segment with endpoints $y$ and $x+\epsilon(x-y)$. 
As $F_K(x)$ is an extreme set containing $x$, it follows $y\in F_K(x)$, which 
proves $S_K(x)\subset F_K(x)$.
\par
We finish the proof by showing that $S_K(x)$ is a face of $K$. Thus, we consider
distinct points $a\neq b$ in $K$ and a point $y$ in the open segment $(a,b)$, 
see Fig.~\ref{fig:points-in-face-generated}. Let $\eta\in(0,1)$
such that $y=(1-\eta)a+\eta b$. The coefficients for the following constructions 
are obtained from Menelaus' theorem \cite{Barth2004,GruenbaumShephard1995}. 
\par
We show that $S_K(x)$ is an extreme set, see 
Fig.~\ref{fig:points-in-face-generated}~a). Assuming $y\in S_K(x)$, there is 
$\epsilon>0$ such that $y':=x+\epsilon(x-y)$ lies in $K$. Let 
$\epsilon_a=\epsilon(1-\eta)/(1+\epsilon\eta)$ and 
$\xi_a=\epsilon\eta/(1+\epsilon\eta)$. Then 
\[
a':=x+\epsilon_a(x-a)=(1-\xi_a)y'+\xi_a b
\]
lies in $K$, and hence $a\in S_K(x)$. Similarly, $b\in S_K(x)$.
\par
We show that $S_K(x)$ is convex, see 
Fig.~\ref{fig:points-in-face-generated}~b). 
We assume $a,b\in S_K(x)$. Let $\epsilon_a,\epsilon_b>0$ such that
$a':=x+\epsilon_a(x-a)$ and $b':=x+\epsilon_b(x-b)$ lie in $K$;
and let 
\[\textstyle
\epsilon=\frac{\epsilon_a\epsilon_b}{(1-\eta)\epsilon_b+\eta\epsilon_a}
\quad\text{and}\quad
\xi=\frac{\eta\epsilon_a}{(1-\eta)\epsilon_b+\eta\epsilon_a}
\,\text{.}
\]
Then
\[
y':=x+\epsilon(x-y)=(1-\xi)a'+\xi b'
\]
lies in $K$, and hence $y\in S_K(x)$.
\end{proof}
The union in Coro.~\ref{cor:facex} extends over all closed segments in $K$,
whose respective open segments contain $x$, and the singleton 
$\{x\}=(x,x)=[x,x]$.
\par
\begin{Cor}\label{cor:facex}
Every point $x\in K$ is an internal point of $F_K(x)$ and we have
$F_K(x)=\bigcup_{y,z\in K,x\in(y,z)}[y,z]$.
\end{Cor}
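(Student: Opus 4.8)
The plan is to derive both assertions directly from Alfsen's formula $F_K(x)=S_K(x)$ (Prop.~\ref{pro:Alfsen}), using only the defining extreme-set property of the face $F_K(x)$. The key geometric observation, used repeatedly, is that whenever $\epsilon>0$ and we set $z:=x+\epsilon(x-y)$, the point $x$ lies in the open segment $(y,z)$; indeed a routine computation gives $x=\tfrac{1}{1+\epsilon}z+\tfrac{\epsilon}{1+\epsilon}y$ with both coefficients in $(0,1)$.

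For the internal-point claim, I would fix $y\in F_K(x)$ with $y\neq x$. By Prop.~\ref{pro:Alfsen} we have $y\in S_K(x)$, so there is $\epsilon>0$ with $z:=x+\epsilon(x-y)\in K$. The observation above places $x$ in the open segment $(y,z)$ whose endpoints lie in $K$. Since $F_K(x)$ is a face containing $x$, its extreme-set property forces $[y,z]\subset F_K(x)$, and in particular $z=x+\epsilon(x-y)\in F_K(x)$. This is precisely the condition that $x$ be an internal point of $F_K(x)$.

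For the union formula, write $U:=\bigcup_{y,z\in K,\,x\in(y,z)}[y,z]$, understood to include the singleton $\{x\}=[x,x]$ for the degenerate case. The inclusion $U\subset F_K(x)$ is immediate: each segment $[y,z]$ appearing in the union satisfies $x\in(y,z)$ with $y,z\in K$, so the extreme-set property of $F_K(x)$ yields $[y,z]\subset F_K(x)$, while $\{x\}\subset F_K(x)$ is trivial. For the reverse inclusion I would take $w\in F_K(x)$. If $w=x$, then $w$ belongs to the singleton term of $U$. If $w\neq x$, then $w\in S_K(x)$ by Prop.~\ref{pro:Alfsen}, so some $z:=x+\epsilon(x-w)$ lies in $K$, and by the same observation $x\in(w,z)$; hence $w\in[w,z]\subset U$. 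This gives $F_K(x)\subset U$ and completes the identity.

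I do not expect a genuine obstacle here, since Alfsen's formula carries the essential content; the only points demanding care are the elementary verification that $x\in(y,z)$ and the separate treatment of the degenerate case $w=x$, which is exactly why the singleton $\{x\}$ must be adjoined to the union.
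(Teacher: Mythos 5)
Your proof is correct and follows essentially the same route as the paper's: both deduce everything from Alfsen's formula $F_K(x)=S_K(x)$ together with the observation that $x=\tfrac{1}{1+\epsilon}z+\tfrac{\epsilon}{1+\epsilon}y$ for $z=x+\epsilon(x-y)$, and both get ``$\supset$'' from the extreme-set property of $F_K(x)$. If anything you are slightly more explicit than the paper on the internal-point claim, where you spell out that the extreme-set property upgrades $x+\epsilon(x-y)\in K$ to $x+\epsilon(x-y)\in F_K(x)$ --- a step the paper leaves implicit.
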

\begin{proof}
The first assertion follows from the definition of $S_K(x)$ and from
Prop.~\ref{pro:Alfsen}, which proves $F_K(x)=S_K(x)$. Regarding the 
second assertion, the inclusion ``$\supset$'' holds because $F_K(x)$ 
is an extreme set of $K$ containing $x$. The inclusion ``$\subset$'' 
is implied by a proof of $S_K(x)\subset\bigcup_{y,z\in K,x\in(y,z)}[y,z]$. 
Let $y\in S_K(x)$. Then there is $\epsilon>0$ such that 
$z:=x+\epsilon(x-y)\in K$. Hence, 
$x=\frac{1}{\epsilon+1}(\epsilon y+z)\in(y,z)$ and $y\in[y,z]$ complete
the proof.
\end{proof}
Note that $C_K(x)=\cone(K-x)+x$ holds for all $x\in K$, where $\cone(C)$ 
denotes the set $\bigcup_{\lambda\geq 0}\{\lambda x: x\in C\}$ for every 
convex set $C\subset V$ containing the origin. Borwein and Goebel 
\cite[p.~2544]{BorweinGoebel2003} suggest that a preliminary step to 
Prop.~\ref{pro:BorweinGoebel} should be a proof that $x$ is an internal 
point of $K$ if and only if $\aff(K)=C_K(x)$ holds. Instead, we use the 
following Lemma~\ref{lem:BorweinGoebel}.
\par
\begin{Lem}\label{lem:BorweinGoebel}
Let $x$ be an internal point of $K$. Then $\aff(K)\subset A_K(x)$ holds.
\end{Lem}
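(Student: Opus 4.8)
The plan is to prove something slightly stronger that makes the internal-point hypothesis easy to use: that $A_K(x)$ is an \emph{affine subspace} of $V$ containing $K$. Since $\aff(K)$ is by definition the smallest affine subspace containing $K$, the desired inclusion $\aff(K)\subset A_K(x)$ then follows immediately. The two halves of this scheme use the hypotheses very differently: the affine-subspace structure of $A_K(x)$ rests only on the convexity of $K$, whereas the inclusion $K\subset A_K(x)$ is exactly where I would invoke that $x$ is internal.

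For the first half I would check that $A_K(x)$ contains $x$ (take the direction $x-x=0$), is stable under all homotheties $y\mapsto x+t(y-x)$, $t\in\R$, centred at $x$, and is convex. The stability is where the symmetric, two-sided definition of $A_K(x)$ pays off: if $x\pm\epsilon(x-y)\in K$, then for $t\neq 0$ one has $x-\bigl(x+t(y-x)\bigr)=t(x-y)$, and rescaling the witness to $\epsilon/|t|$ reproduces the pair $x\pm\epsilon(x-y)$, so $x+t(y-x)\in A_K(x)$ for every real $t$, not merely the positive ones. Convexity I would obtain by the convex-combination argument already used for $S_K(x)$ in Prop.~\ref{pro:Alfsen}: for $a,b\in A_K(x)$ with witnesses $\epsilon_a,\epsilon_b$ and $c=(1-\mu)a+\mu b$, the point $x+\epsilon(x-c)$ with $\epsilon=\epsilon_a\epsilon_b/\bigl((1-\mu)\epsilon_b+\mu\epsilon_a\bigr)$ is the convex combination of $x+\epsilon_a(x-a)$ and $x+\epsilon_b(x-b)$ with weights summing to $1$, and the same weights dispatch the minus sign. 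Writing $W=A_K(x)-x$, convexity together with closure under all real scalings forces closure under addition, since $w_1+w_2=2\cdot\tfrac12(w_1+w_2)\in W$; hence $W$ is a linear subspace and $A_K(x)=x+W$ is affine.

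For the second half, let $y\in K$. If $y=x$ then $y\in A_K(x)$ already. Otherwise the internal-point property yields $\epsilon_0>0$ with $x+\epsilon_0(x-y)\in K$; setting $\epsilon=\min(\epsilon_0,1)$ makes $x+\epsilon(x-y)$ a convex combination of $x$ and $x+\epsilon_0(x-y)$, and $x-\epsilon(x-y)=(1-\epsilon)x+\epsilon y$ a convex combination of $x$ and $y$, so both lie in $K$ and $y\in A_K(x)$. I expect no single step to be genuinely hard; the one idea that carries the proof is that the $\pm$ in the definition of $A_K(x)$ upgrades it from a convex cone with apex $x$ into a full affine subspace. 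The point worth keeping in mind is that $A_K(x)$ is typically strictly larger than $K$---a convex combination $(1-\epsilon)x+\epsilon y\in K$ need not force $y\in K$---so the argument must run through the subspace structure of $A_K(x)$ rather than through any inclusion $A_K(x)\subset K$.
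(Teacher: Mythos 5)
Your proof is correct, and it takes a genuinely different route from the paper's. The paper argues pointwise: given $y\in\aff(K)$ it writes $y=x+\sum_i\alpha_iy_i$ with $y_i\in K$ and $\sum_i\alpha_i=0$, reflects each $y_i$ through $x$ using the internal-point hypothesis, and then exhibits $x\pm\epsilon(x-y)$ explicitly as a convex combination of points lying on the segments $[y_i,x]$ or $[x,y_i']$, with $\epsilon$ controlled by $\|\alpha\|_1$. You instead prove the structural statement that $A_K(x)$ is an affine subspace of $V$ using only the convexity of $K$ (containment of $x$, stability under homotheties centred at $x$ --- where the two-sided $\pm$ in the definition is precisely what absorbs the sign of the scaling factor --- and convexity via the same coefficients as in the proof of Prop.~\ref{pro:Alfsen}), and you invoke the internal-point hypothesis only to get $K\subset A_K(x)$ via the $\min(\epsilon_0,1)$ truncation; minimality of $\aff(K)$ then finishes the job. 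All of these steps check out, and there is no circularity since Prop.~\ref{pro:Alfsen} precedes the lemma. Your route avoids the $\|\alpha\|_1$ bookkeeping, cleanly isolates where the hypothesis is used, and yields as a byproduct that $A_K(x)$ is an affine subspace for \emph{every} $x\in K$, a fact the paper only recovers later through Coro.~\ref{cor:ShirokovWeis-affine}; the paper's computation, in exchange, produces an explicit quantitative witness $\epsilon$ for each $y$.
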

\begin{proof}
Let $y\neq x$ be a point in $\aff(K)$. It suffices to find $\epsilon>0$ such 
that the two points $x\pm\epsilon(x-y)$ are both contained in $K$.
\par
Since $x,y\in\aff(K)$ and since $y\neq x$, there exist $y_i\in K$ and 
$\alpha_i\in\R$, $i=1,\dots,n$, not all numbers $\alpha_i$ being zero, 
such that
\[\textstyle
y=x+\sum_i\alpha_iy_i
\quad\text{and}\quad
\sum_i\alpha_i=0\,\text{.}
\]
By the assumption that $x$ is an internal point of $K$, there is $\epsilon_i>0$ 
such that 
\[
y_i':=x+\epsilon_i(x-y_i)
\]
is contained in $K$, $i=1,\dots,n$. Let $\|\alpha\|_1=\sum_{i=1}^n|\alpha_i|$ 
and let $\epsilon$ be the minimum of $\min_i\epsilon_i/\|\alpha\|_1$ and 
$1/\|\alpha\|_1$. Then $\epsilon$ is strictly positive. We have 
\begin{align*}\textstyle
x\pm\epsilon(x-y) 
 & \textstyle
 =x\mp\epsilon\sum_i\alpha_iy_i
 =x\pm\epsilon\sum_i\alpha_i(x-y_i)\\
 &\textstyle
 =\sum_i\frac{|\alpha_i|}{\|\alpha\|_1}
 \underbrace{\left(x\pm\sgn(\alpha_i)\epsilon\|\alpha\|_1(x-y_i)\right)}_{z_i:=}
\,\text{.}
\end{align*}
If $\pm\sgn(\alpha_i)=-1$, then $z_i\in[y_i,x]\subset K$ holds because of
$\epsilon\|\alpha\|_1\leq 1$. If $\sgn(\alpha_i)=0$, then $z_i=x\in K$. If 
$\pm\sgn(\alpha_i)=+1$, then $z_i\in[x,y_i']\subset K$ because 
$\epsilon\|\alpha\|_1\leq\epsilon_i$. This shows that the points 
$x\pm\epsilon(x-y)$ are convex combinations of points in $K$, and therefore 
are themselves points in $K$.
\end{proof}
Prop.~\ref{pro:BorweinGoebel} is mentioned on p.~2544 of \cite{BorweinGoebel2003}.
\par
\begin{Pro}[Borwein-Goebel]\label{pro:BorweinGoebel}
For all $x\in K$, the following assertions are equivalent.
\begin{enumerate}
\item
The point $x$ is an internal point of $K$.
\item
We have $x\in\rai(K)$.
\item
We have $A_K(x)=C_K(x)$.
\item
We have $C_K(x)=\aff(K)$.
\end{enumerate}
\end{Pro}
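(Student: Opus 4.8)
The plan is to use assertion (1) as a hub, showing it equivalent to each of (2), (3), (4) by means of Lemma~\ref{lem:BorweinGoebel} together with a few inclusions that hold for every $x\in K$ with no extra hypotheses. Before starting, I would record these standing facts: first, $A_K(x)\subset C_K(x)$, since any $\epsilon$ witnessing $y\in A_K(x)$ already certifies $x-\epsilon(x-y)\in K$; second, $C_K(x)\subset\aff(K)$, because $x-\epsilon(x-y)\in K\subset\aff(K)$ exhibits $y$ as an affine combination of $x$ and that point (equivalently $C_K(x)=\cone(K-x)+x$, as noted after Coro.~\ref{cor:facex}); and third, $K\subset C_K(x)$, taking $\epsilon=1$ so that $x-(x-y)=y\in K$. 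Chaining these yields the standing relations $A_K(x)\subset C_K(x)\subset\aff(K)$ and $K\subset C_K(x)$.

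For (1)$\Rightarrow$(3) and (1)$\Rightarrow$(4) at once, I would feed ``$x$ internal'' into Lemma~\ref{lem:BorweinGoebel} to obtain $\aff(K)\subset A_K(x)$. Combined with the standing chain $A_K(x)\subset C_K(x)\subset\aff(K)$, this forces all three sets to coincide, so $A_K(x)=C_K(x)=\aff(K)$, which is precisely (3) and (4).

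The two converses are short direct arguments, each producing the internal-point witness $x+\epsilon(x-y)\in K$. For (3)$\Rightarrow$(1), take $y\in K$; then $y\in C_K(x)=A_K(x)$, so some $\epsilon>0$ gives $x+\epsilon(x-y)\in K$. For (4)$\Rightarrow$(1), given $y\in K$ I would apply the reflection $2x-y$, which lies in $\aff(K)=C_K(x)$; unwinding the definition of $C_K$ at the point $2x-y$ produces $x-\epsilon\bigl(x-(2x-y)\bigr)=x+\epsilon(x-y)\in K$. In both cases $x$ is internal.

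Finally, for (1)$\Leftrightarrow$(2): assuming (1), Lemma~\ref{lem:BorweinGoebel} again gives $\aff(K)\subset A_K(x)$, so for a line $g=\{x+tv:t\in\R\}\subset\aff(K)$ through $x$ the endpoint $y=x+v\in A_K(x)$ supplies $\epsilon>0$ with $x\pm\epsilon(x-y)=x\mp\epsilon v\in K$, whence the open segment $(x-\epsilon v,x+\epsilon v)$ lies in $g\cap K$ and $x\in\rai(K)$; conversely, if $x\in\rai(K)$ then for $y\in K$ the line through $x$ and $y$ meets $K$ in an open segment about $x$, furnishing a point $x+\epsilon(x-y)\in K$ on the far side of $x$, so $x$ is internal. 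The only place demanding care is the sign bookkeeping, matching $x+\epsilon(x-y)$ and $x-\epsilon(x-y)$ to the defining conditions of $S_K$, $C_K$, $A_K$, and in particular the reflection step in (4)$\Rightarrow$(1); everything else is substitution into the definitions, so I expect no genuine obstacle once the Lemma is in hand.
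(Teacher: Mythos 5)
Your proposal is correct and follows essentially the same route as the paper: Lemma~\ref{lem:BorweinGoebel} gives $\aff(K)\subset A_K(x)$ from (1), the trivial chain $A_K(x)\subset C_K(x)\subset\aff(K)$ then yields (2), (3), (4), and the converses (3)$\Rightarrow$(1) via $K\subset C_K(x)$ and (4)$\Rightarrow$(1) via the reflection $2x-y$ are exactly the paper's arguments. Your explicit sign bookkeeping for (1)$\Leftrightarrow$(2) only spells out what the paper dismisses as clear.
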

\begin{proof}
If $x$ is an internal point of $K$, then Lemma~\ref{lem:BorweinGoebel} shows 
$\aff(K)\subset A_K(x)$. This implies 2) by the definition of the relative 
algebraic interior. It also implies 3) and 4), because the inclusions 
$A_K(x)\subset C_K(x)\subset \aff(K)$ are trivial. As \mbox{2) $\Rightarrow$ 1)} 
is clear, it suffices to prove \mbox{n) $\Rightarrow$ 1)} for $n=3,4$.
\par
Assume 3) is true and let $y\neq x$ be a point of $K$. Since $K\subset C_K(x)$,
the point $y$ lies in $A_K(x)$. This provides $\epsilon>0$ such that 
$x+\epsilon(x-y)$ lies in $K$. Hence, $x$ is an internal point of $K$. 
\par
Assume 4) is true and let $y\neq x$ be a point of $K$. Then $z:=2x-y$ lies in 
$\aff(K)$ and hence in $C_K(x)$. This provides $\epsilon>0$ such that
\[
x+\epsilon(x-y)=x-\epsilon(x-z)\in K
\]
and shows that $x$ is an internal point of $K$.
\end{proof}
This section's main result is a novel proof for Thm.~2.3 in 
\cite{WeisShirokov2021}, which states the following.
\par
\begin{Thm}[W\!.-Shirokov]\label{thm:WeisShirokov}
For all $x\in K$ we have $x\in\rai(F_K(x))$.
\end{Thm}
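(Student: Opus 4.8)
The plan is to treat the theorem as the immediate corollary it is advertised to be, combining the two preceding results without any further construction. First I would note that $F_K(x)$ is by definition a face of $K$, hence a convex subset of $V$ that contains $x$. This is exactly the setting to which Prop.~\ref{pro:BorweinGoebel} applies, except that the ambient convex set there should now be taken to be $F_K(x)$ in place of $K$ itself. Since the statement of that proposition holds for an arbitrary convex set and an arbitrary point of it, this substitution is legitimate.

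The key input is Coro.~\ref{cor:facex}, whose first assertion is that every point $x\in K$ is an internal point of $F_K(x)$. This is precisely condition 1) of Prop.~\ref{pro:BorweinGoebel} for the convex set $F_K(x)$ and the point $x$. Invoking the implication \mbox{1) $\Rightarrow$ 2)} of that proposition then delivers $x\in\rai(F_K(x))$, which is the claim. In effect the argument is just: internal point of the generated face (Coro.~\ref{cor:facex}, ultimately resting on Alfsen's formula Prop.~\ref{pro:Alfsen}) plus the internal-point/relative-algebraic-interior equivalence (Prop.~\ref{pro:BorweinGoebel}) equals membership in the relative algebraic interior.

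I do not expect any substantial obstacle, because the genuine content has already been absorbed into the two cited results; the theorem itself is a one-line composition of them. The only point deserving a moment's care is the one flagged above, namely that Prop.~\ref{pro:BorweinGoebel} is being read with $F_K(x)$ as the convex set rather than $K$, which is harmless since faces are convex by definition. This is exactly the step at which the earlier proof's detour through the Kuratowski-Zorn lemma becomes unnecessary, and I would emphasize that the whole chain remains within Zermelo-Fraenkel set theory.
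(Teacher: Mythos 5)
Your proposal is correct and coincides with the paper's own proof: the paper likewise cites Coro.~\ref{cor:facex} for the fact that $x$ is an internal point of $F_K(x)$ and then applies Prop.~\ref{pro:BorweinGoebel} (with the convex set taken to be $F_K(x)$) to conclude $x\in\rai(F_K(x))$. Your explicit remark that the proposition is being applied to $F_K(x)$ rather than $K$ is exactly the reading the paper intends.
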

\begin{proof}
The point $x$ is an internal point of $F_K(x)$ by Coro.~\ref{cor:facex}.
It lies in the relative algebraic interior of $F_K(x)$ by 
Prop.~\ref{pro:BorweinGoebel}.
\end{proof}
\begin{Cor}\label{cor:ShirokovWeis-affine}
For all $x\in K$ we have $\aff\left(F_K(x)\right)=A_K(x)$.
\end{Cor}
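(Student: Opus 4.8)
The plan is to obtain the identity in two stages: first for the face $F:=F_K(x)$ regarded as a convex set in its own right, and then by transferring the computation from $F$ to $K$.

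First, I would apply Prop.~\ref{pro:BorweinGoebel} to the convex set $F$. By Coro.~\ref{cor:facex} the point $x$ is an internal point of $F$, so assertion 1) of the proposition holds with $F$ substituted for $K$. Its equivalence with assertions 3) and 4) then yields $A_{F_K(x)}(x)=C_{F_K(x)}(x)=\aff(F_K(x))$, where $A_{F_K(x)}(x)$ and $C_{F_K(x)}(x)$ denote the sets defined at the start of Sec.~\ref{sec:face-generated-by-a-point} with $F_K(x)$ in place of $K$. In particular $\aff(F_K(x))=A_{F_K(x)}(x)$.

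Second, I would prove $A_{F_K(x)}(x)=A_K(x)$. The inclusion $A_{F_K(x)}(x)\subset A_K(x)$ is immediate from $F_K(x)\subset K$. For the reverse inclusion, let $y\in A_K(x)$; the case $y=x$ is trivial, so assume $y\neq x$. By definition there is $\epsilon>0$ such that the two points $x\pm\epsilon(x-y)$ both lie in $K$, and these points are distinct since $y\neq x$, with $x$ their midpoint; hence $x$ lies in the open segment joining them. Since $F_K(x)$ is an extreme set of $K$ containing $x$, both endpoints belong to $F_K(x)$, so $y\in A_{F_K(x)}(x)$. Combining the two stages gives $\aff(F_K(x))=A_K(x)$.

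I do not expect a genuine obstacle: the argument is a short assembly of the extreme-set property of $F_K(x)$ with the already established Prop.~\ref{pro:BorweinGoebel}. The only points needing minor care are the degenerate case $y=x$ and the remark that the endpoints $x\pm\epsilon(x-y)$ are distinct exactly when $y\neq x$, so that $x$ genuinely sits in an open segment and extremality of $F_K(x)$ may be invoked.
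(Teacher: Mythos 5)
Your proof is correct and follows essentially the same route as the paper: both establish $\aff(F_K(x))=A_{F_K(x)}(x)$ by applying Prop.~\ref{pro:BorweinGoebel} to the face $F_K(x)$ (you invoke the internal-point statement of Coro.~\ref{cor:facex} directly, the paper passes through Thm.~\ref{thm:WeisShirokov}, which are equivalent via that proposition), and both then identify $A_{F_K(x)}(x)$ with $A_K(x)$ using the extreme-set property of $F_K(x)$. No gaps.
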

\begin{proof}
Thm.~\ref{thm:WeisShirokov} and Prop.~\ref{pro:BorweinGoebel} prove
$\aff\left(F_K(x)\right)=A_{F_K(x)}(x)$. The inclusion 
$A_{F_K(x)}(x)\subset A_K(x)$ is clear. Conversely, if $y\in A_K(x)$, then 
there is $\epsilon>0$ such that $x\pm\epsilon(x-y)\in K$. Since $F_K(x)$ 
is an extreme set of $K$ containing $x$, this implies 
$x\pm\epsilon(x-y)\in F_K(x)$, and hence $y\in A_{F_K(x)}(x)$.
\end{proof}
%
%
\section{Review on the face generated by a point}
\label{sec:prior-work}
Here we review some of our prior work from \cite[Sec.~2]{WeisShirokov2021}.
Lemma~\ref{lem:relint}.1 matches \cite[Thm.~18.1]{Rockafellar1970}.
See Lemma 2.1 in \cite{WeisShirokov2021} for a proof.
\par
\begin{Lem}\label{lem:relint}
Let $C\subset K$ be a convex subset of $K$, 
let $E\subset K$ be an extreme set of $K$,
let $F\subset K$ be a face of $K$, 
and let $x\in K$ be a point in $K$. Then
\begin{enumerate}
\item
$\rai(C)\cap E\neq\emptyset\implies C\subset E$,
\item
$x\in F\iff F_K(x)\subset F$,
\item
$x\in\rai(F)\implies F=F_K(x)$.
\end{enumerate}
\end{Lem}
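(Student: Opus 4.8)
The plan is to establish the three assertions in order, using the first as the engine for the third and drawing the link between $\rai$ and internal points from Prop.~\ref{pro:BorweinGoebel}.

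For assertion 1), I would fix a point $x\in\rai(C)\cap E$ and show that an arbitrary $c\in C$ lies in $E$. The case $c=x$ is immediate, so assume $c\neq x$. Since $x\in\rai(C)$, Prop.~\ref{pro:BorweinGoebel} identifies $x$ as an internal point of $C$, so there is $\epsilon>0$ with $c':=x+\epsilon(x-c)\in C\subset K$. Then $x$ lies in the open segment $(c,c')$, and since $E$ is an extreme set of $K$ containing $x$ with endpoints $c,c'\in K$, the definition of extreme set forces $[c,c']\subset E$, whence $c\in E$. This yields $C\subset E$.

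For assertion 2), I would argue directly from the construction of $F_K(x)$ as the intersection of all faces of $K$ containing $x$. If $F_K(x)\subset F$, then $x\in F_K(x)\subset F$. Conversely, if $x\in F$, then $F$ is one of the faces over which the intersection defining $F_K(x)$ is taken, so $F_K(x)\subset F$. For assertion 3), I would combine the first two: assuming $x\in\rai(F)$, the point $x$ lies in the face $F$, so assertion 2) gives $F_K(x)\subset F$. For the reverse inclusion I apply assertion 1) with the convex set $C:=F$ and the extreme set $E:=F_K(x)$, both legitimate since a face is convex and extreme; as $x\in\rai(F)\cap F_K(x)$, the intersection is nonempty, so $F\subset F_K(x)$, giving $F=F_K(x)$.

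The only step requiring genuine care is assertion 1); the crux is passing from membership in $\rai(C)$ to a bona fide open segment in $C$ straddling $x$, which is precisely what the internal-point reformulation of Prop.~\ref{pro:BorweinGoebel} delivers. Once that translation is in hand, the extreme-set property closes assertion 1), and assertions 2) and 3) are then purely formal consequences of the definition of $F_K(x)$ and of assertion 1).
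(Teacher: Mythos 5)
Your proof is correct; the paper itself does not reprove this lemma but merely cites Lemma~2.1 of \cite{WeisShirokov2021}, and your argument is the natural one: the reflection $c'=x+\epsilon(x-c)$ places $x$ in an open segment $(c,c')$ with endpoints in $K$, so the extreme-set property of $E$ captures $c$, and assertions 2) and 3) then follow formally. Note that you only need the easy implication $x\in\rai(C)\Rightarrow x$ is an internal point of $C$ (immediate from the definition of $\rai$, and the direction labelled ``clear'' in Prop.~\ref{pro:BorweinGoebel}), so there is no circularity with the later material that does rely on Thm.~\ref{thm:WeisShirokov}.
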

Lemma~\ref{lem:ri-convex} is proved in Lemma 2.2 in \cite{WeisShirokov2021}.
\par
\begin{Lem}\label{lem:ri-convex}
The complement $K\setminus\rai(K)$ of the relative algebraic interior $\rai(K)$ 
is an extreme set of $K$ and $\rai(K)$ is a convex set. 
\end{Lem}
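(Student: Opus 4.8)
The plan is to isolate a single tool — a line-segment principle for the relative algebraic interior — from which both assertions fall out at once. Concretely, I would first establish the following claim: if $x\in\rai(K)$ and $y\in K$, then every point $w=(1-t)x+ty$ with $t\in[0,1)$ lies in $\rai(K)$. Geometrically this says that $\rai(K)$ absorbs every point of $K$ toward each of its own points, and it is the only nontrivial ingredient; the two conclusions of the lemma are then two-line deductions.

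To prove the claim I would invoke the internal-point characterization of Prop.~\ref{pro:BorweinGoebel}, equivalence 1) $\Leftrightarrow$ 2). Since $w\in[x,y]\subset K$ by convexity, it suffices to show that $w$ is an internal point of $K$, i.e.\ that for each $z\in K$ with $z\neq w$ there is $\epsilon>0$ with $w+\epsilon(w-z)\in K$. As $x\in\rai(K)$ is itself internal, for $z\neq x$ I can first pick $\delta>0$ with $u:=x+\delta(x-z)\in K$; eliminating $z$ in favour of $u$ and substituting $w=(1-t)x+ty$, a short computation rewrites $w+\epsilon(w-z)$ as an affine combination of the three points $x,y,u\in K$ whose coefficients sum to $1$. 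The coefficients of $y$ and $u$ are visibly nonnegative, while the coefficient of $x$ tends to $1-t>0$ as $\epsilon\to0$; hence for all sufficiently small $\epsilon>0$ the combination is convex and lies in $K$. The degenerate direction $z=x$ (which forces $t\neq0$) is handled directly, since there $w+\epsilon(w-x)=(1-t-\epsilon t)x+(t+\epsilon t)y\in[x,y]\subset K$ for small $\epsilon$.

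Granting the claim, convexity of $\rai(K)$ is immediate: for $x_1,x_2\in\rai(K)$ the claim with $x=x_1$, $y=x_2$ puts all points $(1-t)x_1+tx_2$, $t\in[0,1)$, in $\rai(K)$, and $x_2\in\rai(K)$ closes the segment, so $[x_1,x_2]\subset\rai(K)$. For the extreme-set property, take $a\neq b$ in $K$ with $(a,b)\cap(K\setminus\rai(K))\neq\emptyset$ and suppose, toward a contradiction, that some $d\in[a,b]$ lies in $\rai(K)$. Applying the claim to $d$ together with each endpoint forces both open half-segments from $d$ toward $a$ and toward $b$ into $\rai(K)$, hence the entire open segment $(a,b)\subset\rai(K)$, contradicting the presence of a point of $(a,b)$ outside $\rai(K)$. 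Therefore $[a,b]\subset K\setminus\rai(K)$, so $K\setminus\rai(K)$ is extreme.

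The main obstacle is purely the bookkeeping inside the line-segment principle: arranging the elimination of $z$ so that the coefficients of $x$, $y$, and $u$ are transparently those of a convex combination for small $\epsilon$, together with the separate treatment of the degenerate direction $z=x$. The boundary cases $t=0$ and $\rai(K)\in\{\emptyset,K\}$ (where both assertions hold vacuously) need only a passing remark, and everything downstream of the claim is routine.
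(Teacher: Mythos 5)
Your proof is correct. Note that the paper does not actually prove Lemma~\ref{lem:ri-convex}; it defers entirely to Lemma~2.2 of the cited reference \cite{WeisShirokov2021}, so there is no in-paper argument to compare against line by line. Your route --- isolating the line-segment principle ``$x\in\rai(K)$, $y\in K$ $\Rightarrow$ $(1-t)x+ty\in\rai(K)$ for $t\in[0,1)$'' and deriving it from the internal-point characterization 1)\,$\Leftrightarrow$\,2) of Prop.~\ref{pro:BorweinGoebel} --- is sound and fits the paper's toolkit without circularity, since Prop.~\ref{pro:BorweinGoebel} is established independently of this lemma. I checked the key computation: with $u=(1+\delta)x-\delta z$ one gets
\[
w+\epsilon(w-z)
=\Bigl[(1+\epsilon)(1-t)-\epsilon\bigl(1+\tfrac{1}{\delta}\bigr)\Bigr]x
+(1+\epsilon)t\,y+\tfrac{\epsilon}{\delta}\,u ,
\]
the coefficients sum to $1$, the last two are nonnegative, and the first tends to $1-t>0$ as $\epsilon\to 0^+$, so for small $\epsilon$ this is a convex combination of points of $K$; the degenerate direction $z=x$ is handled as you say. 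The two downstream deductions also hold: convexity of $\rai(K)$ is immediate, and for the extreme-set claim the union of the half-open segments $[d,a)$ and $[d,b)$ does cover $(a,b)$ whenever $d\in[a,b]$, yielding the desired contradiction. The one presentational quibble is that your claim is stated for $t\in[0,1)$ including $t=0$, where it is trivial; and you should make explicit that a finite affine combination with nonnegative coefficients summing to $1$ of points of the convex set $K$ lies in $K$ --- but these are cosmetic. As a bonus, your line-segment principle is a reusable statement (an ``accessibility lemma'' for $\rai$) that the paper never records explicitly, so including it would be a genuine addition rather than a duplication.
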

Whereas Lemma~\ref{lem:relint} and Lemma~\ref{lem:ri-convex} are rather easy to 
prove, the remainder of this section relies on Thm.~\ref{thm:WeisShirokov}.
\par
\begin{Cor}\label{cor:extreme}
Let $S\subset K$. The following assertions are equivalent. 
\begin{enumerate}
\item
$S$ is an extreme set of $K$.
\item
$S$ includes the face $F_K(x)$ of $K$ generated by any point $x$ in $S$.
\item
$S$ is the union of the faces $F_K(x)$ of $K$ generated by the points $x$ in $S$.
\end{enumerate}
\end{Cor}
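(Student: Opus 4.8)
The plan is to prove the three-way equivalence by establishing the cycle $1)\Rightarrow 2)\Rightarrow 3)\Rightarrow 1)$, leaning on the characterization of $F_K(x)$ already developed. The conceptual content lives almost entirely in the implication $1)\Rightarrow 2)$; the other two are bookkeeping.

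For $1)\Rightarrow 2)$, suppose $S$ is an extreme set of $K$ and fix $x\in S$. I want to show $F_K(x)\subset S$. The clean way is to invoke $F_K(x)=S_K(x)$ from Prop.~\ref{pro:Alfsen}: every $y\in F_K(x)$ satisfies $x+\epsilon(x-y)\in K$ for some $\epsilon>0$, so (for $y\neq x$) the point $x$ lies in the open segment $(y,x+\epsilon(x-y))$ whose endpoints are in $K$ and which meets $S$ at $x$. Since $S$ is an extreme set, the whole closed segment lies in $S$, whence $y\in S$. Alternatively, and more slickly, I could avoid re-running the segment argument by citing Lemma~\ref{lem:relint}: we have $x\in\rai(F_K(x))$ by Thm.~\ref{thm:WeisShirokov}, and $F_K(x)$ is convex, so $\rai(F_K(x))\cap S\neq\emptyset$ (it contains $x$), and Lemma~\ref{lem:relint}.1 with $C=F_K(x)$ and $E=S$ gives $F_K(x)\subset S$ immediately. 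I expect to prefer this second route since it is a one-liner and showcases the theorem.

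The implication $2)\Rightarrow 3)$ is a set-theoretic identity: if $S$ contains $F_K(x)$ for each $x\in S$, then $\bigcup_{x\in S}F_K(x)\subset S$; the reverse inclusion is automatic since $x\in F_K(x)$ for every $x$. Hence $S=\bigcup_{x\in S}F_K(x)$.

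For $3)\Rightarrow 1)$, I argue that an arbitrary union of faces is an extreme set. Each $F_K(x)$ is by definition a face, hence an extreme set of $K$; and the excerpt already records in Sec.~\ref{sec:definitions} that any union of extreme sets of $K$ is an extreme set of $K$. So $S=\bigcup_{x\in S}F_K(x)$ is an extreme set. The only point requiring a moment's care — and the mild obstacle — is that $S$ might be empty, but the empty set is vacuously an extreme set, a convex set, and the empty union, so all three conditions hold trivially and the equivalence is undisturbed. This completes the cycle.
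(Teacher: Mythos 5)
Your proposal is correct, and your preferred route for 1)\,$\Rightarrow$\,2) (combining Thm.~\ref{thm:WeisShirokov} with Lemma~\ref{lem:relint}.1 applied to $C=F_K(x)$ and $E=S$) together with your handling of the other two implications is exactly the paper's proof; the alternative direct segment argument via Prop.~\ref{pro:Alfsen} is also valid but not needed.
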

\begin{proof}
See Coro.~2.5 in \cite{WeisShirokov2021}; a proof is provided for easy reference.
1)~$\Rightarrow$~2) follows from Lem\-ma~\ref{lem:relint}.1 as $x\in\rai(F_K(x))$ 
holds for all $x\in S$ by Thm.~\ref{thm:WeisShirokov}. 
2)~$\Rightarrow$~3) follows from $x\in F_K(x)$ for all $x\in S$.
3)~$\Rightarrow$~1) follows directly from the definition of an extreme set.
\end{proof}
Coro.~\ref{cor:partition}, and Thm.~\ref{thm:maximal-elements} below, 
match \cite[Thm.~18.2]{Rockafellar1970}. Let
\begin{align*}
\mathfrak{U}_1 &= \left\{\rai(F_K(x)) \colon x \in K\right\}\,\text{,}\\
\mathfrak{U}_2 &= 
\left\{\rai(F)\colon\text{$F$ is a face of $K$}\right\}\setminus\{\emptyset\}
\,\text{.}
\end{align*}
We recall that a partition of $K$ is a family of nonempty subsets of $K$
whose elements are mutually disjoint and whose union is $K$.
\par
\begin{Cor}\label{cor:partition}
We have $\mathfrak{U}_1=\mathfrak{U}_2$, the family $\mathfrak{U}_2$ is a 
partition of $K$, and
\[
\left\{\text{$F$ is a face of $K$} \colon \rai(F)\neq\emptyset\right\}
\longrightarrow \mathfrak{U}_2,
\quad 
F\longmapsto\rai(F)
\]
is a bijection.
\end{Cor}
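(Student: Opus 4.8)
The plan is to establish the three claims of Corollary~\ref{cor:partition} in sequence, leaning on Theorem~\ref{thm:WeisShirokov} and Lemma~\ref{lem:relint} throughout. The equality $\mathfrak{U}_1=\mathfrak{U}_2$ should come first, since the remaining claims are most naturally stated in terms of $\mathfrak{U}_2$. The inclusion $\mathfrak{U}_1\subset\mathfrak{U}_2$ is immediate: each $F_K(x)$ is a face of $K$, and $\rai(F_K(x))$ is nonempty because it contains $x$ by Theorem~\ref{thm:WeisShirokov}. For the reverse inclusion, I would take a face $F$ with $\rai(F)\neq\emptyset$, pick a point $x\in\rai(F)$, and invoke Lemma~\ref{lem:relint}.3 to conclude $F=F_K(x)$, so that $\rai(F)=\rai(F_K(x))\in\mathfrak{U}_1$.

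Next I would verify that $\mathfrak{U}_2$ is a partition of $K$. Covering is the easy part: every $x\in K$ lies in $\rai(F_K(x))$ by Theorem~\ref{thm:WeisShirokov}, and this set belongs to $\mathfrak{U}_2=\mathfrak{U}_1$, so the union of the family is all of $K$. The elements are nonempty by the very definition of $\mathfrak{U}_2$. The crux is \emph{disjointness}, and this is the step I expect to be the main obstacle. I would argue it by contradiction: suppose two faces $F$ and $G$ satisfy $\rai(F)\cap\rai(G)\neq\emptyset$, and let $x$ lie in the intersection. Then $x\in\rai(F)$ forces $F=F_K(x)$ by Lemma~\ref{lem:relint}.3, and likewise $x\in\rai(G)$ forces $G=F_K(x)$; hence $F=G$, so distinct members of $\mathfrak{U}_2$ cannot overlap.

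Finally, for the bijection, the map $F\mapsto\rai(F)$ lands in $\mathfrak{U}_2$ by construction and is surjective by the definition of $\mathfrak{U}_2$ as the image of exactly this assignment. Injectivity is where Theorem~\ref{thm:WeisShirokov} does its real work once more: if two faces $F$ and $G$ with nonempty relative algebraic interiors satisfy $\rai(F)=\rai(G)$, I would choose any $x$ in this common set and apply Lemma~\ref{lem:relint}.3 twice to obtain $F=F_K(x)=G$. The recurring mechanism in all three parts is the same — a nonempty $\rai$ pins down the face uniquely as the one generated by any of its points — so the genuine mathematical content is concentrated in that single consequence of Theorem~\ref{thm:WeisShirokov}, and the remainder is bookkeeping. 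I would therefore present disjointness and injectivity as two faces of one observation rather than as separate arguments.
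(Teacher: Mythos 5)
Your proposal is correct and uses exactly the same ingredients as the paper's proof: Theorem~\ref{thm:WeisShirokov} for covering and Lemma~\ref{lem:relint}.3 for disjointness, injectivity, and the identification $\mathfrak{U}_1=\mathfrak{U}_2$. The only difference is organizational — you prove $\mathfrak{U}_2\subset\mathfrak{U}_1$ directly, whereas the paper deduces it from disjointness plus the covering property — which is immaterial.
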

\begin{proof}
See Coro.~2.6 in \cite{WeisShirokov2021}; a proof is provided for easy reference. 
The union of the family $\mathfrak{U}_1$ covers $K$ as $x\in\rai(F_K(x))$ by 
Thm.~\ref{thm:WeisShirokov}. Since $\mathfrak{U}_1\subset \mathfrak{U}_2$ is clear, 
proving that the elements of $\mathfrak{U}_2$ are mutually disjoint implies that
$\mathfrak{U}_1=\mathfrak{U}_2$ and that $\mathfrak{U}_2$ is a partition of $K$. 
Let $F,G$ be faces of $K$ and let $x\in\rai(F)\cap\rai(G)$. Then 
Lemma~\ref{lem:relint}.3 shows $F=F_K(x)=G$. This also shows that the map in 
question is injective. Its surjectivity is clear.
\end{proof}
\begin{Rem}[Partitions of extreme sets]~
\begin{enumerate}
\item
Every extreme set $E$ of $K$ is the union of the family 
$\{\rai(F_K(x)):x\in E\}$ by Coro.~\ref{cor:extreme}.2 and 
Thm.~\ref{thm:WeisShirokov}. This family is a partition of $E$ 
by Coro.~\ref{cor:partition}.
\item
There exist subfamilies of the partition $\mathfrak{U}_1$ in 
Coro.~\ref{cor:partition}, whose union is not an extreme set of $K$. An 
example is the subfamily having as its only element the open 
unit interval $(0,1)$ when $K=[0,1]$.
\item
If the present concept of an extreme set is replaced with that underlying 
Dubins' work \cite{Dubins1962}, then the union of any subfamily of 
$\mathfrak{U}_1$ will be an extreme set, cf.~Thm.~\ref{thm:d-extreme}.
\end{enumerate}
\end{Rem}
Coro.~\ref{cor:char-ri-F} and Coro.~\ref{cor:char-ri-Fx} are similar to 
\cite[Coro.~2.7]{WeisShirokov2021}. Proofs are provided for easy reference. 
\par
\begin{Cor}\label{cor:char-ri-F}
Let $F$ be a face of $K$ and let $x\in K$. The following statements are equivalent.
\begin{enumerate}
\item
We have $x\in\rai(F)$.
\item 
We have $F=F_K(x)$.
\item
We have $\rai(F)=\rai(F_K(x))$.
\end{enumerate}
\end{Cor}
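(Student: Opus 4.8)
The plan is to establish the equivalence as a cyclic chain of implications $1) \Rightarrow 2) \Rightarrow 3) \Rightarrow 1)$, each of which I expect to follow immediately from results already in hand, so that no new combinatorial or geometric work is needed.

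First I would prove $1) \Rightarrow 2)$. Assuming $x\in\rai(F)$, this is precisely the content of Lemma~\ref{lem:relint}.3, which asserts that a point in the relative algebraic interior of a face $F$ generates that face, i.e.\ $F=F_K(x)$. Next, $2) \Rightarrow 3)$ is trivial: if $F=F_K(x)$ as sets, then applying the operator $\rai$ to both sides yields $\rai(F)=\rai(F_K(x))$. The only step carrying any content is $3) \Rightarrow 1)$, and even this is short: by Thm.~\ref{thm:WeisShirokov} we always have $x\in\rai(F_K(x))$, so the hypothesis $\rai(F)=\rai(F_K(x))$ immediately gives $x\in\rai(F)$.

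I do not anticipate a genuine obstacle here, since the corollary is essentially a bookkeeping rearrangement of Lemma~\ref{lem:relint}.3 together with Thm.~\ref{thm:WeisShirokov}. The one point deserving a moment's care is that in the implication $1) \Rightarrow 2)$ one must use that $F$ is a \emph{face} (not merely an extreme set or convex subset), because Lemma~\ref{lem:relint}.3 is stated for faces; the hypotheses of the corollary supply exactly this. It is also worth noting in passing that the chain could be closed in other orders, but the route $1) \Rightarrow 2) \Rightarrow 3) \Rightarrow 1)$ is the most economical, invoking each prior result exactly once.
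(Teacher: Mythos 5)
Your proposal is correct and follows exactly the same cyclic chain as the paper's own proof: $1)\Rightarrow 2)$ via Lemma~\ref{lem:relint}.3, $2)\Rightarrow 3)$ trivially, and $3)\Rightarrow 1)$ via Thm.~\ref{thm:WeisShirokov}. No differences to report.
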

\begin{proof}
1) $\Rightarrow$ 2) is Lemma~\ref{lem:relint}.3. 
2) $\Rightarrow$ 3) is clear.
3) $\Rightarrow$ 1) is implied by Thm.~\ref{thm:WeisShirokov}.
\end{proof}
\begin{Cor}\label{cor:char-ri-Fx}
Let $x,y\in K$. The following statements are equivalent.
\begin{enumerate}
\item
We have $x\in\rai(F(y))$.
\item 
We have $F_K(y)=F_K(x)$.
\item 
We have $\rai(F_K(y))=\rai(F_K(x))$.
\item
We have $x\in F_K(y)$ and $y\in F_K(x)$.
\end{enumerate}
\end{Cor}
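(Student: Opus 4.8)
The plan is to reduce the whole chain of equivalences to Corollary~\ref{cor:char-ri-F} together with the monotonicity recorded in Lemma~\ref{lem:relint}.2, reading the face $F(y)$ in statement 1) as $F_K(y)$. Since $F_K(y)$ is a face of $K$, I would apply Corollary~\ref{cor:char-ri-F} to the particular face $F=F_K(y)$. Its three statements then read $x\in\rai(F_K(y))$, $F_K(y)=F_K(x)$, and $\rai(F_K(y))=\rai(F_K(x))$, which are exactly the present statements 1), 2), and 3). This delivers the equivalence 1) $\iff$ 2) $\iff$ 3) with no further work.

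It then remains only to attach statement 4) to the others, and the cleanest route is to establish 2) $\iff$ 4). The implication 2) $\Rightarrow$ 4) is immediate: if $F_K(x)=F_K(y)$, then $x\in F_K(x)=F_K(y)$ and $y\in F_K(y)=F_K(x)$, using nothing beyond the fact that a point always lies in the face it generates. For the converse 4) $\Rightarrow$ 2) I would invoke Lemma~\ref{lem:relint}.2 twice: the hypothesis $x\in F_K(y)$ yields $F_K(x)\subset F_K(y)$, while $y\in F_K(x)$ yields $F_K(y)\subset F_K(x)$, and the two inclusions combine to $F_K(x)=F_K(y)$, which is statement 2).

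I expect no genuine obstacle here; the corollary is a bookkeeping consequence of the structural results already in hand, so the only thing demanding care is the correct interpretation of the abbreviated notation $F(y)=F_K(y)$ in statement 1). A fully symmetric alternative would be to prove the four conditions pairwise equivalent directly, but routing everything through Corollary~\ref{cor:char-ri-F} and Lemma~\ref{lem:relint}.2 keeps the argument short and self-contained.
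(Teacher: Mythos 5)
Your proposal is correct and follows essentially the same route as the paper: the equivalence 1) $\iff$ 2) $\iff$ 3) is obtained as the special case of Coro.~\ref{cor:char-ri-F} with $F=F_K(y)$, the implication 2) $\Rightarrow$ 4) is immediate, and 4) $\Rightarrow$ 2) is deduced from Lemma~\ref{lem:relint}.2. Your reading of $F(y)$ as $F_K(y)$ is also the intended one.
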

\begin{proof}
1) $\Leftrightarrow$ 2) $\Leftrightarrow$ 3) is the special case of
Coro.~\ref{cor:char-ri-F} when $F$ is replaced with $F_K(y)$.
2) $\Rightarrow$ 4) is clear, and 4) $\Rightarrow$ 2) follows from
Lemma~\ref{lem:relint}.2.
\end{proof}
Prop.~\ref{pro:intersection} complements \cite[Thm.~4.3]{Dubins1962}
but is not equivalent to it, as different concepts of a ``face'' 
are in use (see Sec.~\ref{sec:Dubins-Face} below).
\par
\begin{Pro}\label{pro:intersection}
Let $K,L\subset V$ be convex sets and let $x\in K\cap L$. Then
\begin{enumerate}
\item
\parbox{5cm}{\raggedleft $F_{K\cap L}(x)$}
$=$
\parbox{5cm}{$F_K(x)\cap F_L(x)$\,\text{,}}
\item
\parbox{5cm}{\raggedleft $\rai\big(F_{K\cap L}(x)\big)$}
$=$
\parbox{5cm}{$\rai\big(F_K(x)\big)\cap\rai\big(F_L(x)\big)$\,\text{,}}
\item
\parbox{5cm}{\raggedleft $\aff\big(F_{K\cap L}(x)\big)$}
$=$
\parbox{5cm}{$\aff\big(F_K(x)\big)\cap\aff\big(F_L(x)\big)$\,\text{.}}
\end{enumerate}
\end{Pro}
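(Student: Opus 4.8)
The plan is to prove the three identities in the order first, third, second, because the first and third reduce to one and the same elementary convexity computation, whereas the second rests on the face machinery of Secs.~\ref{sec:fintest-loc-conv-topo} and~\ref{sec:face-generated-by-a-point}.

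For the first identity I would invoke Alfsen's formula (Prop.~\ref{pro:Alfsen}): for a convex set $M$, a point $y$ belongs to $F_M(x)$ precisely when $x+\epsilon(x-y)\in M$ for some $\epsilon>0$. The inclusion $F_{K\cap L}(x)\subset F_K(x)\cap F_L(x)$ is immediate, since a witness $x+\epsilon(x-y)\in K\cap L$ serves as a witness for both $K$ and $L$. For the reverse inclusion I take $y\in F_K(x)\cap F_L(x)$, choose $\epsilon_1,\epsilon_2>0$ with $x+\epsilon_1(x-y)\in K$ and $x+\epsilon_2(x-y)\in L$, and put $\epsilon=\min\{\epsilon_1,\epsilon_2\}$. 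As $x$ itself lies in $K\cap L$, the point $x+\epsilon(x-y)$ is a convex combination of $x$ and $x+\epsilon_i(x-y)$ inside each of $K$ and $L$, hence lies in $K\cap L$, so that $y\in F_{K\cap L}(x)$.

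For the third identity I would first replace affine hulls by the sets $A_M(x)$ via Coro.~\ref{cor:ShirokovWeis-affine}, reducing the claim to $A_{K\cap L}(x)=A_K(x)\cap A_L(x)$. This is settled by exactly the same ``minimum of two radii'' argument, carried out simultaneously for the two points $x\pm\epsilon(x-y)$: shrinking $\epsilon$ keeps both of them on the segments joining $x$ to $x\pm\epsilon_i(x-y)$, so a single $\epsilon$ witnesses membership in $K$ and in $L$ at once.

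The main obstacle is the second identity, and within it the inclusion $\rai(F_{K\cap L}(x))\subset\rai(F_K(x))\cap\rai(F_L(x))$: this cannot be read off the internal-point description of Prop.~\ref{pro:BorweinGoebel}, because an absorbing direction $w\in F_K(x)$ need not lie in the smaller face $F_{K\cap L}(x)$ whose internal structure one controls. I would instead argue through the generated faces. Fix $z\in\rai(F_{K\cap L}(x))$. Since $F_{K\cap L}(x)$ is a face of $K\cap L$ and $z$ lies in its relative algebraic interior, Lemma~\ref{lem:relint}.3 gives $F_{K\cap L}(z)=F_{K\cap L}(x)$, and the already-proved first identity applied at $z$ rewrites this as $F_K(z)\cap F_L(z)=F_K(x)\cap F_L(x)$. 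Now $z$ lies in $F_K(x)$, as it lies in the smaller face $F_{K\cap L}(x)$, while $x$ lies in $F_K(z)$ by the displayed equality; hence Lemma~\ref{lem:relint}.2 forces $F_K(z)=F_K(x)$, and symmetrically $F_L(z)=F_L(x)$. Theorem~\ref{thm:WeisShirokov} then places $z$ in $\rai(F_K(z))=\rai(F_K(x))$ and in $\rai(F_L(z))=\rai(F_L(x))$. The reverse inclusion follows the same template: from $z\in\rai(F_K(x))\cap\rai(F_L(x))$, Lemma~\ref{lem:relint}.3 yields $F_K(z)=F_K(x)$ and $F_L(z)=F_L(x)$, the first identity at $z$ gives $F_{K\cap L}(z)=F_{K\cap L}(x)$, and Theorem~\ref{thm:WeisShirokov} delivers $z\in\rai(F_{K\cap L}(x))$.
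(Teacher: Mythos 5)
Your proposal is correct. Note, however, that the paper itself offers no argument for this proposition: its ``proof'' is the single line ``See [Prop.~2.13] of Weis--Shirokov (2021)'', so there is nothing in the text to match your route against. What you supply is a self-contained derivation from the paper's own toolkit, and each step checks out: part~1 follows from Alfsen's formula (Prop.~\ref{pro:Alfsen}) with the standard ``take the minimum of the two $\epsilon$'s and use convexity through $x$'' argument; part~3 correctly reduces via Coro.~\ref{cor:ShirokovWeis-affine} to $A_{K\cap L}(x)=A_K(x)\cap A_L(x)$, which the same shrinking argument handles for both points $x\pm\epsilon(x-y)$ simultaneously; and for part~2 you rightly identify that the internal-point criterion alone does not give the inclusion $\rai(F_{K\cap L}(x))\subset\rai(F_K(x))\cap\rai(F_L(x))$, and instead combine Lemma~\ref{lem:relint}.2--3, the already-proved part~1 applied at the point $z$, and Thm.~\ref{thm:WeisShirokov} to show $F_K(z)=F_K(x)$ and $F_L(z)=F_L(x)$ (and conversely). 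The only cosmetic omissions are the trivial verifications that $y\in K\cap L$ in the reverse inclusion of part~1 and that $z\in K\cap L$ before invoking part~1 at $z$; both are immediate since faces are subsets of their ambient convex sets. Your argument has the merit of making the proposition independent of the external reference, using only results proved in Secs.~\ref{sec:face-generated-by-a-point} and~\ref{sec:prior-work}.
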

\begin{proof}
See \cite[Prop.~2.13]{WeisShirokov2021}.
\end{proof}
\begin{Cor}\label{cor:face-in-intersection}
Let $K,L\subset V$ be convex sets and let $F$ be a nonempty face of $K\cap L$
with $\rai(F)\neq\emptyset$. Then $F$ is the intersection of a face of $K$ 
and a face of $L$. A sufficient condition for $\rai(F)\neq\emptyset$ is that 
$F$ have finite dimension.
\end{Cor}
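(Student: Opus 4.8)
The plan is to treat the two assertions separately. The first is a direct combination of Proposition~\ref{pro:intersection} with the characterization of relative algebraic interiors in Corollary~\ref{cor:char-ri-F}, and requires essentially no computation. Since $\rai(F)\neq\emptyset$, I would fix a point $x\in\rai(F)$; because $\rai(F)\subset F\subset K\cap L$, the point $x$ lies in both $K$ and $L$, so the faces $F_K(x)$ and $F_L(x)$ are defined. Applying Corollary~\ref{cor:char-ri-F} to the convex set $K\cap L$ and its face $F$, the implication \mbox{1) $\Rightarrow$ 2)} turns $x\in\rai(F)$ into $F=F_{K\cap L}(x)$. Proposition~\ref{pro:intersection}.1 then rewrites this as $F=F_K(x)\cap F_L(x)$, exhibiting $F$ as the intersection of a face of $K$ with a face of $L$, as claimed.

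For the sufficiency of finite-dimensionality I would show that the barycenter of a maximal affinely independent subset of $F$ lies in $\rai(F)$. Set $d=\dim\aff(F)<\infty$ and choose affinely independent points $x_0,\dots,x_d\in F$ with $\aff\{x_0,\dots,x_d\}=\aff(F)$ (such points exist: a maximal affinely independent subset of $F$ has $d+1$ elements and spans $\aff(F)$, and greedily extending an affinely independent set terminates after finitely many steps, so no choice principle is invoked). Their convex hull is a simplex $\Delta\subset F$ with $\aff(\Delta)=\aff(F)$. I would prove that the barycenter $b=\tfrac{1}{d+1}\sum_i x_i$ is an internal point of $F$ and then invoke Proposition~\ref{pro:BorweinGoebel} to conclude $b\in\rai(F)$, whence $\rai(F)\neq\emptyset$. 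To check the internal-point property, take $y\in F$ with $y\neq b$ and write it in barycentric coordinates $y=\sum_i\mu_i x_i$ with $\sum_i\mu_i=1$, which is legitimate because $y\in\aff(F)=\aff(\Delta)$ and the $x_i$ are affinely independent. Then $b+\epsilon(b-y)=\sum_i\bigl(\tfrac{1+\epsilon}{d+1}-\epsilon\mu_i\bigr)x_i$ has barycentric coordinates summing to $1$, and each coordinate is nonnegative once $\tfrac{1+\epsilon}{d+1}\geq\epsilon\mu_i$; as there are only finitely many indices, a single $\epsilon>0$ satisfies all these inequalities at once, placing $b+\epsilon(b-y)$ in $\Delta\subset F$.

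The only real work is the bookkeeping in the second part, and the main obstacle is recognizing that finiteness of $d$ is exactly what makes the argument go through: the finitely many admissible bounds on $\epsilon$ have a strictly positive minimum, so an $\epsilon>0$ serving all indices simultaneously exists. This is precisely the step that breaks down in the introduction's examples, where $\aff(K)$ is infinite-dimensional and no single $\epsilon$ can absorb a point along every affine direction at once. Given Theorem~\ref{thm:WeisShirokov} and Proposition~\ref{pro:BorweinGoebel}, I expect no further difficulty.
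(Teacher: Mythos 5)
Your proof is correct, and while the first assertion is handled exactly as in the paper (the step $x\in\rai(F)\Rightarrow F=F_{K\cap L}(x)$ via Coro.~\ref{cor:char-ri-F} is literally Lemma~\ref{lem:relint}.3, followed by Prop.~\ref{pro:intersection}.1), your treatment of the sufficiency of finite-dimensionality genuinely departs from the paper's. The paper argues topologically: for $\dim(F)<\infty$ the finest locally convex topology on $\aff(F)$ is the Euclidean one, so $\ri(F)\neq\emptyset$ by Rockafellar's Thm.~6.2, and then $\ri(F)=\rai(F)$ by Coro.~\ref{cor:ri=rai}. You instead give a self-contained convexity argument: pick a maximal affinely independent set $x_0,\dots,x_d$ in $F$ spanning $\aff(F)$, and verify directly that the barycenter $b=\tfrac{1}{d+1}\sum_i x_i$ is an internal point of $F$, since for $y=\sum_i\mu_i x_i$ the point $b+\epsilon(b-y)$ has barycentric coordinates $\tfrac{1+\epsilon}{d+1}-\epsilon\mu_i$ summing to $1$, and finitely many constraints on $\epsilon$ admit a common positive value; Prop.~\ref{pro:BorweinGoebel} then gives $b\in\rai(F)$. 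Your route buys elementarity and self-containment: it imports nothing from topological vector spaces or from Rockafellar's monograph, stays entirely within the paper's ``relative algebraic'' framework, and sidesteps the choice-of-complement issue flagged in Rem.~\ref{rem:choice} that lurks behind Coro.~\ref{cor:ri=rai} (harmless here since everything is finite-dimensional, but your version never raises the question). The paper's route is shorter on the page and illustrates how the $\mathfrak{T}_\omega$ machinery of Sec.~\ref{sec:fintest-loc-conv-topo} connects back to the classical finite-dimensional theory. Both are valid; your identification of the finiteness of the index set as the exact point where the argument uses $\dim(F)<\infty$ is accurate and matches the failure mode of the introduction's infinite-dimensional examples.
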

\begin{proof}
Let $x\in\rai(F)$. Lemma~\ref{lem:relint}.3 shows $F=F_{K\cap L}(x)$ and 
Prop.~\ref{pro:intersection}.1 proves the first claim. If $\dim(F)<\infty$,
then $\mathfrak{T}_\omega(\aff(F))$ is the Euclidean topology on $\aff(F)$
\cite[II.26]{Bourbaki1987}. Hence, the relative interior of $F$ is nonempty 
by Thm.~6.2 in \cite{Rockafellar1970}. Coro.~\ref{cor:ri=rai} shows that the
relative algebraic interior is nonempty, too.
\end{proof}
Is the assumption of $\rai(F)\neq\emptyset$ in 
Coro.~\ref{cor:face-in-intersection} necessary?
\par
\begin{Ope}
Are there convex sets $K$ and $L$ and a face $F$ of $K\cap L$ that can not be 
written as the intersection of a face of $K$ and a face of $L$?
\end{Ope}
The analogue of Prop.~\ref{pro:intersection} for infinitely many convex sets
is wrong.
\par
\begin{Exa}
We consider the open segment $(-\epsilon,1+\epsilon)$ for every $\epsilon>0$. 
The intersection $\bigcap_{\epsilon>0}(-\epsilon,1+\epsilon)$ is the closed 
unit interval $[0,1]$. The only face of $(-\epsilon,1+\epsilon)$ containing 
the extreme point $0$ of $[0,1]$ is $(-\epsilon,1+\epsilon)$ itself. 
Therefore, $\{0\}$ is not an intersection of faces of 
$\{(-\epsilon,1+\epsilon)\}_{\epsilon>0}$, as such an intersection includes
$[0,1]$. More examples are obtained by replacing some or all of the open 
segments $(-\epsilon,1+\epsilon)$ with closed segments $[-\epsilon,1+\epsilon]$.
\end{Exa}
%
%
\section{Novel results on faces and relative algebraic interiors}
\label{sec:novel-results-I}
This section is inspired by properties of relative interiors of convex 
sets in finite dimensions \cite{Rockafellar1970}.
Various insights into relative algebraic interiors are facilitated by
Prop.~\ref{pro:char2-open-segments} and Coro.~\ref{cor:ri-gen}. The unions 
in these statements extend over all open segments in $K$ containing $x$, 
and the singleton $\{x\}=(x,x)$. Prop.~\ref{pro:char2-open-segments} 
matches part of \cite[Thm.~6.1]{Rockafellar1970}.
\par
\begin{Pro}\label{pro:char2-open-segments}
For all $x\in K$ we have $\rai(F_K(x))=\bigcup_{y,z\in K,x\in(y,z)}(y,z)$.
\end{Pro}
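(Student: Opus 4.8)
The plan is to set $F:=F_K(x)$ and $U:=\bigcup_{y,z\in K,\,x\in(y,z)}(y,z)$, where the union also carries the degenerate term $(x,x)=\{x\}$, and to prove the two inclusions $U\subseteq\rai(F)$ and $\rai(F)\subseteq U$ separately. Both rest on the same toolkit: Alfsen's formula $F=S_K(x)$ from Prop.~\ref{pro:Alfsen}, the fact $x\in\rai(F)$ from Thm.~\ref{thm:WeisShirokov}, the identification of $\rai(F)$ with the internal points of $F$ from Prop.~\ref{pro:BorweinGoebel}, and the ``two points generate the same face'' criteria of Coro.~\ref{cor:char-ri-F} and Coro.~\ref{cor:char-ri-Fx}.

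For $U\subseteq\rai(F)$ I would take $w\in(y,z)$ with $y,z\in K$ and $x\in(y,z)$; the case $w=x$ is immediate from Thm.~\ref{thm:WeisShirokov}, so assume $w\neq x$. Since $x\in(y,z)$, Alfsen's formula gives $y,z\in S_K(x)=F$, whence $w\in[y,z]\subseteq F$, so $w\in F_K(x)$. After possibly swapping $y$ and $z$, the point $w$ lies on $(x,z)$, and a one-line computation exhibits $\epsilon>0$ with $z=w+\epsilon(w-x)\in K$, so $x\in S_K(w)=F_K(w)$. Thus $w\in F_K(x)$ and $x\in F_K(w)$, and Coro.~\ref{cor:char-ri-Fx} (with its first point taken to be $w$ and its second point $x$, so that assertion~4 is exactly these two memberships and assertion~1 reads $w\in\rai(F_K(x))$) yields $w\in\rai(F)$.

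For $\rai(F)\subseteq U$ I would take $w\in\rai(F)$; again $w=x$ falls under the term $(x,x)\subseteq U$, so assume $w\neq x$. Both $x$ and $w$ are internal points of $F$ (Thm.~\ref{thm:WeisShirokov} and Prop.~\ref{pro:BorweinGoebel}). Extending the segment beyond $x$, internality of $x$ applied to $w\in F$ gives $\epsilon>0$ with $y':=x+\epsilon(x-w)\in F\subseteq K$; extending beyond $w$, internality of $w$ applied to $y'\in F$ gives $\delta>0$ with $z':=w+\delta(w-y')\in F\subseteq K$. Parametrising the common line by $w=0$ and $x=1$ shows the four points occur in the order $z'<w<x<y'$, so that both $w$ and $x$ lie in the open segment $(z',y')$ with $y',z'\in K$; hence $x\in(z',y')$ and $w\in(z',y')$, i.e.\ $w\in U$.

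The routine parts are the Menelaus-type coefficient computations that justify ``$z=w+\epsilon(w-x)\in K$'' and the ordering of the four points; the one genuinely delicate point is the reverse inclusion, where membership in $U$ requires an open segment that contains $x$ in its \emph{interior} (matching the index set of $U$) rather than merely one having $w$ in its interior. The device that overcomes this is the double extension: first push past $x$ to create the endpoint $y'$ lying beyond $x$ away from $w$, then push past $w$ to create the opposite endpoint $z'$, so that $x$ and $w$ end up strictly between $z'$ and $y'$ simultaneously.
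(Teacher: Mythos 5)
Your proof is correct and follows essentially the same route as the paper: the forward inclusion reduces, exactly as in the paper's chain of equivalences, to the mutual memberships $w\in F_K(x)$ and $x\in F_K(w)$ resolved by Coro.~\ref{cor:char-ri-Fx}, while your reverse inclusion's double extension (first past $x$, then past $w$) is the same geometric step the paper performs via Coro.~\ref{cor:facex}, merely phrased through the internal-point property of Prop.~\ref{pro:BorweinGoebel}. The only cosmetic difference is that the paper packages both directions into a single string of equivalences.
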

\begin{proof}
The point $x$ lies in the left-hand side of the equation by 
Thm.~\ref{thm:WeisShirokov} and in the right-hand side by definition.
Let $a\in K$ and $a\neq x$. Then
\[\begin{array}{rcl}
a\in\rai(F_K(x))
 & \stackrel{\rm{Coro.~\ref{cor:char-ri-Fx}.4}}{\iff} 
 & \mbox{$a\in F_K(x)$ and $x\in F_K(a)$}\\
 & \stackrel{\rm{Coro.~\ref{cor:facex}}}{\iff}  
 & \exists y,z\in K : \mbox{$x\in(a,y)$ and $a\in(x,z)$}\\
 & \stackrel{a\neq x}{\iff}
 & \exists y,z\in K : x,a\in(y,z)
\end{array}\]
proves the claim.
\end{proof}
\begin{Cor}\label{cor:ri-gen}
Let $x\in K$. Then $\rai(K)=\bigcup_{y,z\in K,x\in(y,z)}(y,z)$ holds
if and only if $x\in\rai(K)$.
\end{Cor}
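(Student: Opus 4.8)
The plan is to eliminate the right-hand side union by invoking Prop.~\ref{pro:char2-open-segments}, which identifies $\bigcup_{y,z\in K,\,x\in(y,z)}(y,z)$ with $\rai(F_K(x))$. After this substitution the asserted equivalence reduces to $\rai(K)=\rai(F_K(x))$ if and only if $x\in\rai(K)$, and both implications then follow directly from results already in hand.

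For the forward implication I would assume $\rai(K)=\bigcup_{y,z\in K,\,x\in(y,z)}(y,z)$. By Prop.~\ref{pro:char2-open-segments} the right-hand side is exactly $\rai(F_K(x))$, so $\rai(K)=\rai(F_K(x))$. Since Thm.~\ref{thm:WeisShirokov} gives $x\in\rai(F_K(x))$, I immediately conclude $x\in\rai(K)$. This direction requires nothing beyond these two citations.

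For the converse I would assume $x\in\rai(K)$. The key observation is that $K$ is itself a face of $K$: it is convex, and it is an extreme set of $K$ because, by convexity, it contains every closed segment with endpoints in $K$. Applying Lemma~\ref{lem:relint}.3 with $F=K$ then forces $F_K(x)=K$. Hence $\rai(K)=\rai(F_K(x))$, and a final appeal to Prop.~\ref{pro:char2-open-segments} rewrites this as $\rai(K)=\bigcup_{y,z\in K,\,x\in(y,z)}(y,z)$, which is the claimed equality.

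I do not expect a genuine obstacle here; the argument is a short assembly of Prop.~\ref{pro:char2-open-segments}, Thm.~\ref{thm:WeisShirokov}, and Lemma~\ref{lem:relint}.3. The only point demanding a moment's care is recognizing that $K$ counts as a face of itself, so that Lemma~\ref{lem:relint}.3 may be applied with $F=K$; this is precisely what converts the membership $x\in\rai(K)$ into the identity $F_K(x)=K$ that the converse needs.
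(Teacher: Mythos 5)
Your proof is correct and follows essentially the same route as the paper: the paper cites Coro.~\ref{cor:char-ri-F} for the equivalence $x\in\rai(K)\iff\rai(K)=\rai(F_K(x))$ and then applies Prop.~\ref{pro:char2-open-segments}, whereas you unpack that corollary into its two ingredients (Thm.~\ref{thm:WeisShirokov} for one direction, Lemma~\ref{lem:relint}.3 with $F=K$ for the other). The substance is identical.
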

\begin{proof}
Coro.~\ref{cor:char-ri-F} shows that $x\in\rai(K)$ is equivalent to 
$\rai(K)=\rai(F_K(x))$. Now, the claim follows from 
Prop.~\ref{pro:char2-open-segments}.
\end{proof}
Coro.~\ref{cor:ri} matches part of \cite[Thm.~6.2]{Rockafellar1970}.
\par
\begin{Cor}\label{cor:ri}
If $\rai(K)\neq\emptyset$, then $\aff(\rai(K))=\aff(K)$ holds.
\end{Cor}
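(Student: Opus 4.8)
The plan is to prove the equality $\aff(\rai(K))=\aff(K)$ by establishing the two inclusions separately. One of them is immediate and needs no hypothesis: since $\rai(K)\subseteq K$, passing to affine hulls gives $\aff(\rai(K))\subseteq\aff(K)$. All the work therefore lies in the reverse inclusion $\aff(K)\subseteq\aff(\rai(K))$, and the idea is to use the nonemptiness of $\rai(K)$ to fix a base point through which every point of $K$ can be joined to the relative algebraic interior by a segment.

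Concretely, I would pick a point $x\in\rai(K)$, which exists by hypothesis. By the Borwein--Goebel equivalence (Prop.~\ref{pro:BorweinGoebel}), $x$ is an internal point of $K$. Now let $w\in K$ with $w\neq x$ be arbitrary. The internal point property furnishes an $\epsilon>0$ such that $w':=x+\epsilon(x-w)$ lies in $K$, and since $w'=(1+\epsilon)x-\epsilon w$ a direct rearrangement gives $x=\tfrac{\epsilon}{1+\epsilon}w+\tfrac{1}{1+\epsilon}w'$. As $w'\neq w$ (because $x\neq w$) and the two coefficients lie in $(0,1)$, this shows $x\in(w,w')$.

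The key step is then to promote the open segment $(w,w')$ into $\rai(K)$. Since $x\in\rai(K)$, Coro.~\ref{cor:ri-gen} supplies the representation $\rai(K)=\bigcup_{y,z\in K,\,x\in(y,z)}(y,z)$; as $w,w'\in K$ and $x\in(w,w')$, the segment $(w,w')$ is one of the sets in this union, so $(w,w')\subseteq\rai(K)$. Because the affine hull of a nondegenerate open segment is the entire line carrying it, $\aff(\rai(K))$ contains the line through $w$ and $x$, and in particular the point $w$. Since $w\in K\setminus\{x\}$ was arbitrary and $x\in\rai(K)\subseteq\aff(\rai(K))$, this yields $K\subseteq\aff(\rai(K))$ and hence $\aff(K)\subseteq\aff(\rai(K))$, completing the argument.

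I do not anticipate a genuine obstacle here; the only point deserving care is the passage from a single open segment lying in $\rai(K)$ to the conclusion that $w$ belongs to the affine hull, which rests solely on the elementary fact that $\aff((w,w'))$ is the full line through $w$ and $w'$. As an alternative to invoking Coro.~\ref{cor:ri-gen}, one may note that $x\in\rai(K)$ forces $K=F_K(x)$ (Coro.~\ref{cor:char-ri-F} applied to the face $F=K$ of itself) and then read off $(w,w')\subseteq\rai(F_K(x))=\rai(K)$ directly from Prop.~\ref{pro:char2-open-segments}; either route delivers the same inclusion.
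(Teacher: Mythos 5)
Your proof is correct and follows essentially the same route as the paper: fix $x\in\rai(K)$, reflect a point across $x$ to obtain an open segment in $K$ containing $x$, and invoke Coro.~\ref{cor:ri-gen} to place that segment inside $\rai(K)$. The only (immaterial) difference is that the paper takes $y\in\aff(K)$ directly and writes it as an affine combination of two points of $\rai(K)$, whereas you establish $K\subseteq\aff(\rai(K))$ first and then pass to affine hulls.
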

\begin{proof}
Let $x\in\rai(K)$ and $y\in\aff(K)$. By the definition of the relative 
algebraic interior, there is $\epsilon>0$ such that 
$x\pm\epsilon(x-y)\in K$. Coro.~\ref{cor:ri-gen} shows 
$y_\pm=x\pm\frac{\epsilon}{2}(x-y)\in\rai(K)$, which implies 
\[\textstyle
y=\left(\frac{1}{2}-\frac{1}{\epsilon}\right)y_+ 
+ \left(\frac{1}{2}+\frac{1}{\epsilon}\right)y_-
\in\aff(\rai(K))\,\text{.}
\]
The opposite inclusion is obvious.
\end{proof}
Coro.~\ref{cor:inter-rel-open} matches part of 
\cite[Thm.~6.5]{Rockafellar1970}. 
\par
\begin{Cor}\label{cor:inter-rel-open}
Let $K,L\subset V$ be convex sets and $\rai(K)\cap\rai(L)\neq\emptyset$.
Then
\[
\rai(K\cap L)=\rai(K)\cap\rai(L)
\quad\text{and}\quad
\aff(K\cap L)=\aff(K)\cap\aff(L)\,\text{.}
\]
The intersection of two relative algebraically open convex sets is 
relative algebraically open. 
\end{Cor}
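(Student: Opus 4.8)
The plan is to reduce the whole statement to Prop.~\ref{pro:intersection} by exploiting that a point in the relative algebraic interior of a convex set generates that set as its face. First I would fix a point $x\in\rai(K)\cap\rai(L)$, which exists by hypothesis. Since $K$ is a face of itself, Coro.~\ref{cor:char-ri-F} applied with $F=K$ turns $x\in\rai(K)$ into the identity $K=F_K(x)$; the same reasoning gives $L=F_L(x)$.

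Next I would compute the face generated by $x$ in the intersection. By Prop.~\ref{pro:intersection}.1 we have $F_{K\cap L}(x)=F_K(x)\cap F_L(x)=K\cap L$. Thm.~\ref{thm:WeisShirokov} then places $x$ in $\rai(F_{K\cap L}(x))=\rai(K\cap L)$, so the relative algebraic interior of $K\cap L$ is nonempty and $K\cap L$ equals the face it generates. With the three identities $F_K(x)=K$, $F_L(x)=L$, and $F_{K\cap L}(x)=K\cap L$ in hand, the two asserted equations drop out by substitution into parts~2 and~3 of Prop.~\ref{pro:intersection}: part~2 yields $\rai(K\cap L)=\rai(K)\cap\rai(L)$ and part~3 yields $\aff(K\cap L)=\aff(K)\cap\aff(L)$.

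For the final sentence I would argue as follows. Suppose $K$ and $L$ are relative algebraically open, so that $K=\rai(K)$ and $L=\rai(L)$. If $K\cap L=\emptyset$ the claim holds trivially, since $\rai(\emptyset)=\emptyset$. Otherwise I would pick any $x\in K\cap L$; then $x\in\rai(K)\cap\rai(L)$, so the hypothesis of the first part is satisfied, and the already-established identity gives $\rai(K\cap L)=\rai(K)\cap\rai(L)=K\cap L$, proving that $K\cap L$ is relative algebraically open.

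I expect no serious obstacle here: the argument rests entirely on the single observation that $x\in\rai(K)$ forces $F_K(x)=K$, after which Prop.~\ref{pro:intersection} does all the work. The only points demanding mild attention are verifying that $K$ counts as a face of itself so that Coro.~\ref{cor:char-ri-F} may be applied, and treating the empty-intersection case separately in the last assertion.
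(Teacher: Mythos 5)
Your proposal is correct and follows essentially the same route as the paper: fix $x\in\rai(K)\cap\rai(L)$, deduce $K=F_K(x)$ and $L=F_L(x)$ (the paper cites Lemma~\ref{lem:relint}.3, which is the same fact you extract from Coro.~\ref{cor:char-ri-F}), and then substitute into Prop.~\ref{pro:intersection}; the final assertion is handled identically. Your explicit treatment of the empty-intersection case is a harmless extra precaution.
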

\begin{proof}
Let $x\in\rai(K)\cap\rai(L)$. Then $K=F_K(x)$ and $L=F_L(x)$ by 
Lemma~\ref{lem:relint}.3 and the first claim follows from 
Prop.~\ref{pro:intersection}. If $K$ and $L$ are relative algebraically 
open, then 
\[
K\cap L 
=\rai(K)\cap\rai(L)
=\rai(K\cap L)
\]
shows that the intersection $K\cap L$ is relative algebraically open.
\end{proof}
Thm.~\ref{thm:ri-affine-trafo} matches part of
\cite[Thm.~6.6]{Rockafellar1970}.
\par
\begin{Thm}\label{thm:ri-affine-trafo}
Let $\rai(K)\neq\emptyset$ and let $\alpha:V\to W$ be an affine map 
to a real vector space $W$\!. Then $\rai(\alpha(K))=\alpha(\rai(K))$.
\end{Thm}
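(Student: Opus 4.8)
The plan is to establish the two inclusions $\alpha(\rai(K))\subset\rai(\alpha(K))$ and $\rai(\alpha(K))\subset\alpha(\rai(K))$ separately, working throughout with the internal-point characterization of the relative algebraic interior from Prop.~\ref{pro:BorweinGoebel}. This characterization is well suited to affine maps: writing $\alpha(v)=L(v)+c$ with $L$ linear, one has $\alpha(x\pm\epsilon(x-y))=\alpha(x)\pm\epsilon(\alpha(x)-\alpha(y))$, so the defining relation $x+\epsilon(x-y)\in K$ of an internal point transports directly to its image. I first record that $\alpha(K)$ is convex, so that $\rai(\alpha(K))$ is defined.

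For the inclusion $\alpha(\rai(K))\subset\rai(\alpha(K))$ I would take $x\in\rai(K)$ and show that $\alpha(x)$ is an internal point of $\alpha(K)$. Given $w\in\alpha(K)$ with $w\neq\alpha(x)$, choose $y\in K$ with $\alpha(y)=w$; then $y\neq x$, and since $x$ is internal in $K$ there is $\epsilon>0$ with $x+\epsilon(x-y)\in K$. Applying $\alpha$ yields $\alpha(x)+\epsilon(\alpha(x)-w)\in\alpha(K)$, so $\alpha(x)\in\rai(\alpha(K))$ by Prop.~\ref{pro:BorweinGoebel}. This direction is routine.

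The substance lies in the reverse inclusion. Fix $x_0\in\rai(K)$, which exists by hypothesis, and let $w\in\rai(\alpha(K))$; if $\alpha(x_0)=w$ we are done, so assume otherwise. Since $w$ is internal in $\alpha(K)$ and $\alpha(x_0)\in\alpha(K)$ with $\alpha(x_0)\neq w$, there is $\epsilon>0$ with $w'':=w+\epsilon(w-\alpha(x_0))\in\alpha(K)$; pick $y''\in K$ with $\alpha(y'')=w''$. Then $w=\tfrac{\epsilon}{1+\epsilon}\alpha(x_0)+\tfrac{1}{1+\epsilon}w''$, so the corresponding convex combination $x_1:=\tfrac{\epsilon}{1+\epsilon}x_0+\tfrac{1}{1+\epsilon}y''\in(x_0,y'')\subset K$ satisfies $\alpha(x_1)=w$. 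It remains to upgrade $x_1\in K$ to $x_1\in\rai(K)$.

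I expect this last step to be the main obstacle: it is precisely a line-segment principle stating that a point strictly between a relative-interior point and an arbitrary point of $K$ again lies in $\rai(K)$. I would prove it from the tools at hand as follows. Because $x_0\in\rai(K)$, Lemma~\ref{lem:relint}.3 gives $F_K(x_0)=K$, and $x_0$ is internal in $K$; applying the latter to $y''$ (which is distinct from $x_0$, since $\alpha(x_0)\neq w$ forces $w''\neq\alpha(x_0)$) produces $\delta>0$ with $q:=x_0+\delta(x_0-y'')\in K$ and hence $x_0\in(y'',q)$. As $x_1\in(x_0,y'')$ lies on the line through $y''$ and $q$, an elementary ordering argument places $x_1$ in the open segment $(y'',q)$ as well. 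Since $x_0\in(y'',q)$ with $y'',q\in K$, Prop.~\ref{pro:char2-open-segments} yields $x_1\in\rai(F_K(x_0))=\rai(K)$. Therefore $w=\alpha(x_1)\in\alpha(\rai(K))$, completing the inclusion and the proof.
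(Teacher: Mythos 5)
Your proof is correct. The first inclusion $\alpha(\rai(K))\subset\rai(\alpha(K))$ is argued exactly as in the paper: lift a point of $\alpha(K)$, use the internal-point property of $x$ in $K$, push forward, and invoke Prop.~\ref{pro:BorweinGoebel}. For the reverse inclusion the routes diverge. The paper applies Coro.~\ref{cor:ri-gen} twice, writing $\alpha(\rai(K))$ as $\alpha\bigl(\bigcup_{y,z\in K,\,x\in(y,z)}(y,z)\bigr)$, pushing the union forward, and identifying the result with $\bigcup_{y',z'\in\alpha(K),\,\alpha(x)\in(y',z')}(y',z')=\rai(\alpha(K))$; the nontrivial content is hidden in the inclusion ``$\supset$'' of that last identification of unions, which requires lifting a segment of $\alpha(K)$ through $\alpha(x)$ back to a segment of $K$ through $x$. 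Your argument makes exactly that lifting explicit and self-contained: you use the internal-point property of $w\in\rai(\alpha(K))$ to overshoot past $\alpha(x_0)$ to a point $w''\in\alpha(K)$, pull back to $x_1\in(x_0,y'')\subset K$ with $\alpha(x_1)=w$, and then upgrade $x_1$ to a point of $\rai(K)$ by the line-segment principle, correctly derived here from Lemma~\ref{lem:relint}.3 and Prop.~\ref{pro:char2-open-segments} (since $x_0\in(y'',q)$ puts the whole open segment $(y'',q)\ni x_1$ inside $\rai(F_K(x_0))=\rai(K)$). The degenerate cases ($\alpha(x_0)=w$, $y''=x_0$) are handled, and the ordering computation placing $x_1$ in $(y'',q)$ checks out. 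Your version is slightly longer but arguably more transparent on the direction where the real work lies; the paper's is more compact at the cost of leaving the segment-lifting step implicit.
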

\begin{proof}
Let $x\in\rai(K)$. First, we show $\alpha(x)\in\rai(\alpha(K))$.
Let $y\neq\alpha(x)$ be a point in $\alpha(K)$ and choose any 
$y'\in\alpha|_K^{-1}(y)$. Since $x$ is an
internal point of $K$, there exists $\epsilon>0$ such that $x+\epsilon(x-y')$ 
lies in $K$. Applying $\alpha$ shows that $\alpha(x)+\epsilon(\alpha(x)-y)$ 
lies in $\alpha(K)$, hence $\alpha(x)$ is an internal point of $\alpha(K)$.
The implication 1)~$\Rightarrow$~2) of Prop.~\ref{pro:BorweinGoebel} 
implies $\alpha(x)\in\rai(\alpha(K))$. Second, Coro.~\ref{cor:ri-gen} proves
\begin{align*}
\alpha(\rai(K)) 
&=\textstyle
\alpha\left(\bigcup_{y,z\in K,x\in(y,z)}(y,z)\right)\\
&=\textstyle
\bigcup_{y,z\in K,x\in(y,z)}(\alpha(y),\alpha(z))\\
&=\textstyle
\bigcup_{y',z'\in\alpha(K),\alpha(x)\in(y',z')}(y',z')
\,\text{.}
\end{align*}
As $\alpha(x)\in\rai(\alpha(K))$, the last expression of this equation
equals $\rai(\alpha(K))$, again by Coro.~\ref{cor:ri-gen}.
\end{proof}
Thm.~\ref{thm:ri2=ri} matches Problem 2 in \cite[III.1.6]{Barvinok2002}.
\par
\begin{Thm}\label{thm:ri2=ri}
The set $\rai(K)$ is a relative algebraically open convex set.
\end{Thm}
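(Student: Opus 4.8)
The plan is to split the assertion into its two halves. Convexity of $\rai(K)$ is already supplied by Lemma~\ref{lem:ri-convex}, so the real content is that $\rai(K)$ is relative algebraically open, i.e.\ $\rai(\rai(K))=\rai(K)$. The inclusion $\rai(\rai(K))\subset\rai(K)$ is automatic, since the relative algebraic interior of any convex set is contained in that set. Hence the whole task reduces to proving $\rai(K)\subset\rai(\rai(K))$. If $\rai(K)=\emptyset$ there is nothing to show, so I would assume $\rai(K)\neq\emptyset$, write $M:=\rai(K)$, fix a point $x\in M$, and aim to verify $x\in\rai(M)$.

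To unpack the goal, I would recall that $x\in\rai(M)$ means: for every line $g$ in $\aff(M)$ through $x$, the intersection $g\cap M$ contains an open segment about $x$. I would fix such a line $g$ and pick any second point $y\neq x$ on it. Since $M\subset K$ forces $\aff(M)\subset\aff(K)$, the point $y$ lies in $\aff(K)$. Because $x\in\rai(K)$, Prop.~\ref{pro:BorweinGoebel} makes $x$ an internal point of $K$, and Lemma~\ref{lem:BorweinGoebel} then yields $\aff(K)\subset A_K(x)$; applied to $y$ this produces $\epsilon>0$ with both $p:=x-\epsilon(x-y)$ and $q:=x+\epsilon(x-y)$ lying in $K$.

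The crucial step—and the only one demanding care—is to upgrade membership in $K$ to membership in $M=\rai(K)$. Here I would invoke Coro.~\ref{cor:ri-gen}: since $x\in\rai(K)$, we have $\rai(K)=\bigcup_{u,v\in K,\,x\in(u,v)}(u,v)$. The endpoints $p,q$ lie in $K$ and $x=\tfrac12(p+q)\in(p,q)$, so the entire open segment $(p,q)$ is contained in $\rai(K)=M$. Furthermore $p$ and $q$ lie on the line through $x$ and $y$, which is precisely $g$, so $(p,q)\subset g\cap M$ while $x\in(p,q)$. This exhibits the required open segment and gives $x\in\rai(M)$.

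As $x\in\rai(K)$ was arbitrary, this establishes $\rai(K)\subset\rai(\rai(K))$, hence equality; together with convexity from Lemma~\ref{lem:ri-convex}, the theorem follows. I expect the passage from $K$ to $\rai(K)$ in the segment construction to be the main obstacle, and it is exactly what Coro.~\ref{cor:ri-gen} is tailored to resolve; once that is in place the verification is routine. I would remark that Coro.~\ref{cor:ri} (which gives $\aff(\rai(K))=\aff(K)$) is not actually needed, since only the trivial inclusion $\aff(M)\subset\aff(K)$ enters the argument.
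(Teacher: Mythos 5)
Your proof is correct. It differs from the paper's in how it verifies that each $x\in\rai(K)$ lies in $\rai(\rai(K))$: you check the definition of the relative algebraic interior directly, taking an arbitrary line $g$ through $x$ in $\aff(\rai(K))\subset\aff(K)$, producing via Lemma~\ref{lem:BorweinGoebel} a symmetric open segment $(p,q)\subset K$ about $x$ on $g$, and then using Coro.~\ref{cor:ri-gen} to upgrade $(p,q)\subset K$ to $(p,q)\subset\rai(K)$. The paper instead shows that $x$ is an internal point of $\rai(K)$ and then appeals to the implication 1)~$\Rightarrow$~2) of Prop.~\ref{pro:BorweinGoebel}: for $y\neq x$ in $\rai(K)$ it uses Coro.~\ref{cor:ri-gen} to find $a,b\in K$ with $x,y\in(a,b)$, and exhibits $x+\tfrac{\lambda}{2(\mu-\lambda)}(x-y)=\tfrac12(a+x)\in\rai(K)$ by an explicit coefficient computation. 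Both arguments hinge on Coro.~\ref{cor:ri-gen} as the device that moves points from $K$ into $\rai(K)$; yours quantifies over lines in the affine hull and avoids the computation, while the paper's quantifies over points of $\rai(K)$ and stays within the internal-point formalism it uses throughout. Your closing remark is also accurate: only the trivial inclusion $\aff(\rai(K))\subset\aff(K)$ is needed, not the equality from Coro.~\ref{cor:ri}, and there is no circularity since Coro.~\ref{cor:ri-gen} does not depend on Thm.~\ref{thm:ri2=ri}.
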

\begin{proof}
The convexity of $\rai(K)$ is provided by Lemma~\ref{lem:ri-convex}. That 
$\rai(K)$ is relative algebraically open can be proved by showing for all
$x\in\rai(K)$ that $x\in\rai(\rai(K))$, or equivalently that $x$ is an 
internal point of $\rai(K)$, as per the implication 1)~$\Rightarrow$~2) of 
Prop.~\ref{pro:BorweinGoebel}. Let $y\neq x$ lie in $\rai(K)$.
Coro.~\ref{cor:ri-gen} provides $a,b\in K$ such that $x$ and $y$ are in the 
open segment $(a,b)$. Relabel $a$ and $b$, if necessary, such that 
$x=(1-\lambda)a+\lambda b$ and $y=(1-\mu)a+\mu b$ for scalars 
$0<\lambda<\mu<1$. Then $\frac{1}{2}(a+x)$ lies in $\rai(K)$, again 
by Coro.~\ref{cor:ri-gen}, and 
\[\textstyle
x+\frac{\lambda}{2(\mu-\lambda)}(x-y)=\frac{1}{2}(a+x)
\]
shows that $x$ is an internal point of $\rai(K)$.
\end{proof}
Coro.~\ref{cor:convex-subset} matches part of 
\cite[Thm.~2.1]{Dubins1962}, see also Coro.~\ref{cor:Dubins21} below.
\par
\begin{Cor}\label{cor:convex-subset}
For every $x\in K$, the set $\rai(F_K(x))$ is the greatest relative 
al\-ge\-bra\-ically open convex subset (with respect to inclusion) 
of $K$ that contains~$x$.
\end{Cor}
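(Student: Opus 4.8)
The plan is to establish the two halves of the maximality claim separately: first that $\rai(F_K(x))$ itself belongs to the class of relative algebraically open convex subsets of $K$ containing $x$, and second that it contains every other member of this class.

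For the first half I would note that $\rai(F_K(x))\subset F_K(x)\subset K$ and that $x\in\rai(F_K(x))$ by Thm.~\ref{thm:WeisShirokov}. Convexity comes from Lemma~\ref{lem:ri-convex}, and relative algebraic openness from Thm.~\ref{thm:ri2=ri} applied to the convex set $F_K(x)$ in place of $K$. This half is routine.

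The substance is the maximality. Let $C$ be any relative algebraically open convex subset of $K$ with $x\in C$, fix an arbitrary $c\in C$, and aim to show $c\in\rai(F_K(x))$; the strategy is to prove the stronger fact $F_K(c)=F_K(x)$ and then appeal to Thm.~\ref{thm:WeisShirokov}. Because $C=\rai(C)$, both $x$ and $c$ lie in $\rai(C)$. The faces $F_K(x)$ and $F_K(c)$ are extreme sets of $K$ that meet $\rai(C)$ (in $x$ and in $c$ respectively), so Lemma~\ref{lem:relint}.1 applied twice yields $C\subset F_K(x)$ and $C\subset F_K(c)$. In particular $c\in F_K(x)$ and $x\in F_K(c)$, which is exactly condition~4) of Coro.~\ref{cor:char-ri-Fx} (with $y=c$); the equivalence 4)~$\Rightarrow$~2) there gives $F_K(c)=F_K(x)$. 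Now Thm.~\ref{thm:WeisShirokov} provides $c\in\rai(F_K(c))=\rai(F_K(x))$, so that $C\subset\rai(F_K(x))$, as desired.

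I expect the only delicate point to be the symmetric use of Lemma~\ref{lem:relint}.1: the relative algebraic openness of $C$ is precisely what places \emph{every} $c\in C$ in $\rai(C)$, letting me anchor $C$ inside $F_K(c)$ as well as inside $F_K(x)$. It is this two-sided containment that couples the two generated faces and forces them to coincide. Once $F_K(c)=F_K(x)$ is in hand, the conclusion is immediate from the results already proved.
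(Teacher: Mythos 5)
Your proof is correct, and the maximality half takes a genuinely different route from the paper's. The paper argues via the open-segment characterizations: since $C=\rai(C)$ contains $x$, Coro.~\ref{cor:ri-gen} (applied to $C$) produces, for each $y\in C$, endpoints $a,b\in C\subset K$ with $x,y\in(a,b)$, and Prop.~\ref{pro:char2-open-segments} then places $y$ directly in $\rai(F_K(x))$ --- a two-line argument once those descriptions are in hand. You instead avoid any segment construction and work with face containments: the openness of $C$ puts both $x$ and $c$ in $\rai(C)$, so Lemma~\ref{lem:relint}.1 anchors $C$ inside $F_K(x)$ \emph{and} inside $F_K(c)$, the mutual memberships $c\in F_K(x)$, $x\in F_K(c)$ force $F_K(c)=F_K(x)$ by Coro.~\ref{cor:char-ri-Fx}, and Thm.~\ref{thm:WeisShirokov} finishes. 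Each step checks out (in particular, $F_K(x)$ and $F_K(c)$ are legitimate extreme sets for Lemma~\ref{lem:relint}.1, and all results you cite precede this corollary, so there is no circularity). What your version buys is a more order-theoretic proof that makes explicit why full relative algebraic openness of $C$ is needed --- the membership $c\in\rai(C)$ is what couples the two generated faces; the paper's version is shorter and more geometric, exhibiting the witnessing segment explicitly. The first half of your argument (that $\rai(F_K(x))$ is itself a relative algebraically open convex subset of $K$ containing $x$) coincides with the paper's.
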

\begin{proof}
Thm.~\ref{thm:ri2=ri} shows that $\rai(F_K(x))$ is a relative algebraically 
open convex set, which contains $x$ by Thm.~\ref{thm:WeisShirokov}.
Let $C\subset K$ be relative algebraically open and convex, and let $x\in C$. 
For each $y\in C$, Coro.~\ref{cor:ri-gen} provides $a,b\in C$ such 
that $x,y\in(a,b)$. Hence, Prop.~\ref{pro:char2-open-segments} shows 
$y\in\rai(F_K(x))$. 
\end{proof}
Thm.~\ref{thm:maximal-elements} generalizes 
\cite[Thm.~18.2]{Rockafellar1970}.
\par
\begin{Thm}\label{thm:maximal-elements}
Let $K\neq\emptyset$. The family of maximal relative algebraically open 
convex subsets (with respect to inclusion) of $K$ is 
\[
\mathfrak{U}
=\left\{\rai(F_K(x)) \colon x \in K\right\}
=\left\{\rai(F)\colon\text{$F$ is a face of $K$}\right\}\setminus\{\emptyset\}
\,\text{.}
\]
The family $\mathfrak{U}$ is a partition of $K$. Each nonempty relative 
algebraically open convex subset of $K$ is included in a unique element 
of $\mathfrak{U}$.
\end{Thm}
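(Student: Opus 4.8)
The plan is to deduce everything from Corollaries~\ref{cor:partition} and~\ref{cor:convex-subset}, which already carry the analytic weight. The two set equalities $\mathfrak{U}=\{\rai(F_K(x)):x\in K\}=\{\rai(F):\text{$F$ a face of $K$}\}\setminus\{\emptyset\}$ and the claim that this family is a partition of $K$ are precisely the content of Corollary~\ref{cor:partition}, since the families there are $\mathfrak{U}_1$ and $\mathfrak{U}_2$ and the corollary proves $\mathfrak{U}_1=\mathfrak{U}_2$ together with the partition property. Hence it only remains to identify $\mathfrak{U}$ with the family of maximal relative algebraically open convex subsets of $K$ and to establish the final sentence.

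For the final sentence, let $C\subset K$ be nonempty, relative algebraically open, and convex, and pick any $x\in C$. By Corollary~\ref{cor:convex-subset}, $\rai(F_K(x))$ is the greatest relative algebraically open convex subset of $K$ containing $x$; since $C$ is one such subset, $C\subset\rai(F_K(x))\in\mathfrak{U}$, which gives existence. Uniqueness is forced by the partition property: if $C$ were contained in two elements $U,U'\in\mathfrak{U}$, then $U\cap U'\supset C\neq\emptyset$ would contradict the mutual disjointness of the members of a partition, so $U=U'$.

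To characterize maximality I would prove both inclusions. Each element $\rai(F_K(x))\in\mathfrak{U}$ is maximal: if a relative algebraically open convex set $C\subset K$ satisfies $\rai(F_K(x))\subset C$, then $x\in C$, and Corollary~\ref{cor:convex-subset} returns $C\subset\rai(F_K(x))$, hence equality. Conversely, let $C$ be a maximal relative algebraically open convex subset of $K$. Since $K\neq\emptyset$, every singleton $\{x\}$ with $x\in K$ is a nonempty relative algebraically open convex subset (its affine hull contains no line, so the defining condition is vacuous), whence a maximal element cannot be empty; choose $x\in C$. By Theorem~\ref{thm:ri2=ri} together with Theorem~\ref{thm:WeisShirokov}, the set $\rai(F_K(x))$ is a relative algebraically open convex subset of $K$ containing $x$, and Corollary~\ref{cor:convex-subset} gives $C\subset\rai(F_K(x))$. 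Maximality of $C$ then forces $C=\rai(F_K(x))\in\mathfrak{U}$.

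I do not expect a serious obstacle, because Corollary~\ref{cor:convex-subset} already packages the greatest-element property that simultaneously drives the maximality characterization and the inclusion statement. The only point requiring care is confirming that maximal elements are nonempty, which rests on $K\neq\emptyset$ and on singletons being relative algebraically open, so that ``maximal'' is not satisfied vacuously by the empty set.
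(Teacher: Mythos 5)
Your proposal is correct and follows essentially the same route as the paper: both deduce the set equalities and the partition property from Corollary~\ref{cor:partition}, and both use the greatest-element property of Corollary~\ref{cor:convex-subset} to establish maximality, the converse, and the inclusion statement. The only cosmetic difference is that you justify the nonemptiness of a maximal element via singletons being relative algebraically open, whereas the paper invokes $\mathfrak{U}\neq\emptyset$; both arguments are sound.
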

\begin{proof}
Let $C$ be a nonempty relative algebraically open convex subset of $K$
and let $x$ be a point in $C$. Then $C\subset\rai(F_K(x))$ holds by 
Coro.~\ref{cor:convex-subset}. The set $C$ can not be included in any other 
element of $\mathfrak{U}$ because $\mathfrak{U}$ is a partition by 
Coro.~\ref{cor:partition}, which also proves the equality between the two 
descriptions of $\mathfrak{U}$.
\par
Let $C$ be a relative algebraically open convex subset of $K$. We show that the 
elements of $\mathfrak{U}$ are maximal. Assume that $C$ includes $\rai(F_K(x))$ 
for some $x\in K$. Then $C$ contains $x$ by Thm.~\ref{thm:WeisShirokov} and 
$C\subset\rai(F_K(x))$ follows from Coro.~\ref{cor:convex-subset}. We show that 
$K$ has no other maximal relative algebraically open convex subsets. Assume that 
$C$ is maximal. As $\mathfrak{U}\neq\emptyset$, the set $C$ contains some point 
$x\in K$. This implies $C=\rai(F_K(x))$ by Coro.~\ref{cor:convex-subset}.
\end{proof}
%
%
\section{Novel results on generators of faces}
\label{sec:novel-results-II}
This section is motivated by theorems in \cite{Dubins1962}, but differs 
from them due to the nonequivalent concepts of a ``face'' 
(see Sec.~\ref{sec:Dubins-Face}).
\begin{Lem}\label{lem:riconv}
Let $K_i$ be a convex subset of $V$\!, let $x_i\in\rai(K_i)$, and let 
$\lambda_i>0$, $i=1,\ldots,n$, such that $\lambda_1+\ldots+\lambda_n=1$.
Then the point $\lambda_1x_1+\ldots+\lambda_nx_n$ lies in the relative 
algebraic interior of the convex hull of $K_1\cup\cdots\cup K_n$.
\end{Lem}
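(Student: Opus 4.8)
The plan is to reduce everything to the internal-point characterization of the relative algebraic interior. Writing $C=\operatorname{conv}(K_1\cup\cdots\cup K_n)$ and $x=\lambda_1 x_1+\cdots+\lambda_n x_n$, I would show that $x$ is an internal point of $C$; the desired conclusion $x\in\rai(C)$ then follows from the implication 1)~$\Rightarrow$~2) of Prop.~\ref{pro:BorweinGoebel}. By the implication 2)~$\Rightarrow$~1) of the same proposition, each $x_i$ is an internal point of $K_i$.

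First I would record the standard description of the convex hull of finitely many convex sets: since each $K_i$ is convex, every $y\in C$ can be written as $y=\sum_{i=1}^n\alpha_i w_i$ with $w_i\in K_i$, $\alpha_i\ge 0$ and $\sum_i\alpha_i=1$. This comes from grouping a finite convex combination of points of $\bigcup_i K_i$ according to which set each point belongs to, and using convexity of $K_i$ within each group; when $\alpha_i=0$ I simply set $w_i:=x_i$. Given such a $y\neq x$, the internal-point property of $x_i$ supplies $\epsilon_i>0$ with $x_i+\epsilon_i(x_i-w_i)\in K_i$, and convexity of $K_i$ then yields $x_i+t(x_i-w_i)\in K_i$ for every $t\in[0,\epsilon_i]$.

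The heart of the argument is a coefficient-matching in the spirit of the Menelaus computations in Prop.~\ref{pro:Alfsen}. For $\epsilon>0$ I would set $c_i=(1+\epsilon)\lambda_i-\epsilon\alpha_i$ and, whenever $c_i>0$, put $t_i=\epsilon\alpha_i/c_i$ and $v_i=x_i+t_i(x_i-w_i)$, so that $c_i v_i=(1+\epsilon)\lambda_i x_i-\epsilon\alpha_i w_i$. Summing over $i$ gives the identity $x+\epsilon(x-y)=\sum_i c_i v_i$, and the telescoping $\sum_i c_i=(1+\epsilon)\sum_i\lambda_i-\epsilon\sum_i\alpha_i=1$ is the decisive point: it makes the right-hand side a convex combination as soon as all $c_i\ge 0$. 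This matching is the step I expect to be the real obstacle, since it is what forces the resulting weights to sum to $1$ automatically.

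It then remains to choose $\epsilon$. Since $c_i\to\lambda_i>0$ and $t_i\to 0$ as $\epsilon\to 0^+$, and there are only finitely many indices, a sufficiently small $\epsilon>0$ makes every $c_i>0$ and every $t_i\le\epsilon_i$ simultaneously; for such $\epsilon$ each $v_i$ lies in $K_i\subset C$, whence $x+\epsilon(x-y)=\sum_i c_i v_i\in C$, proving that $x$ is internal in $C$. I expect the only remaining bookkeeping to be the degenerate cases $\alpha_i=0$ (then $t_i=0$ and $v_i=x_i$, so no internal-point hypothesis is needed for that index) and $w_i=x_i$ (then $x_i+t(x_i-w_i)=x_i\in K_i$, so any $\epsilon_i>0$ works). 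An alternative would be to prove the case $n=2$ by the same matching and then induct via $\sum_i\lambda_i x_i=\lambda_1 x_1+(1-\lambda_1)\sum_{i\ge2}\tfrac{\lambda_i}{1-\lambda_1}x_i$, but the direct computation above dispenses with the induction.
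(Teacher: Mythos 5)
Your proof is correct. The strategy coincides with the paper's---reduce the claim to showing that $x$ is an internal point of the convex hull $C$ of $K_1\cup\cdots\cup K_n$ and conclude via the implication 1)~$\Rightarrow$~2) of Prop.~\ref{pro:BorweinGoebel}---but the execution is genuinely different. The paper proves only the case $n=2$: it fixes one $\epsilon$ that works for both $y_1$ and $y_2$, and then splits into the cases $\mu\leq\lambda$ and $\mu\geq\lambda$ so that the excess weight is carried by $y_2$ or by $y_1$ respectively, yielding explicit closed-form values of $\nu$ and $\eta$; the general case follows by induction. Your uniform ansatz $c_i=(1+\epsilon)\lambda_i-\epsilon\alpha_i$, $t_i=\epsilon\alpha_i/c_i$, $v_i=x_i+t_i(x_i-w_i)$, together with the identities $\sum_i c_iv_i=x+\epsilon(x-y)$ and $\sum_i c_i=1$, treats all $n$ in a single computation and eliminates both the case distinction and the induction, at the modest cost of choosing $\epsilon$ merely ``sufficiently small'' rather than exhibiting it explicitly. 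The points that require care---the representation $y=\sum_i\alpha_i w_i$ with $w_i\in K_i$ for points of the convex hull, the convexity argument giving $x_i+t(x_i-w_i)\in K_i$ for $t\in[0,\epsilon_i]$, and the degenerate indices with $\alpha_i=0$ or $w_i=x_i$ (where the internal-point hypothesis cannot be invoked)---are all handled correctly.
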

\begin{proof}
Let $n=2$, let $\lambda:=\lambda_2\in(0,1)$, $x:=(1-\lambda)x_1+\lambda x_2$,
and let $C$ denote the convex hull of $K_1\cup K_2$. Below, we construct 
for every $y\in C$ a number $\eta>0$ such that $x+\eta(x-y)$ lies in 
$C$. This shows that $x$ is an internal point of $C$, and the implication 
1)~$\Rightarrow$~2) of Prop.~\ref{pro:BorweinGoebel} proves $x\in\rai(C)$.
\par
As $y\in C$, there is $\mu\in[0,1]$ and there are $y_1\in K_1$ and $y_2\in K_2$
such that $y=(1-\mu)y_1+\mu y_2$. Since $x_i$ is an internal point of $K_i$, 
there exists $\epsilon>0$ such that $z_i:=x_i+\epsilon(x_i-y_i)$ lies in $K_i$, 
$i=1,2$. We distinguish two cases. If $\mu\leq\lambda$, then 
$\nu:=1-\mu +\epsilon(\lambda-\mu)$ and $\eta:=\epsilon(1-\lambda)/\nu$ are
positive and 
\[\textstyle
x+\eta(x-y)
=\frac{1}{\nu}\left[
\epsilon(\lambda-\mu)y_2
+(1-\lambda)(1-\mu )z_1
+\lambda(1-\mu )z_2
\right]\in K\,\text{.}
\]
If $\mu\geq\lambda$, then 
$\nu:=\mu+\epsilon(\mu-\lambda)$ and $\eta:=\epsilon\lambda/\nu$ are
positive and 
\[\textstyle
x+\eta(x-y)
=\frac{1}{\nu}\left[
\epsilon(\mu-\lambda)y_1
+(1-\lambda)\mu z_1
+\lambda\mu z_2
\right]\in K\,\text{.}
\]
Induction extends the claim from $n=2$ to all $n\in\N$.
\end{proof}
Thm.~\ref{thm:conv} matches \cite[(3.1)]{Dubins1962}.
\par
\begin{Thm}\label{thm:conv}
Let $C\subset K$ be convex. 
Then $\bigcup_{x\in C}F_K(x)$ is a face of $K$.
\end{Thm}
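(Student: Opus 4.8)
The plan is to write $S=\bigcup_{x\in C}F_K(x)$ and to check that $S$ is a convex extreme set of $K$, that is, a face. The extreme-set half follows almost for free from the machinery already assembled, so the real work lies in establishing convexity, for which I would lean on Lemma~\ref{lem:riconv}.

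For the extreme-set property I would argue as follows. Any $x\in S$ lies in some $F_K(c)$ with $c\in C$, whence $F_K(x)\subset F_K(c)\subset S$ by Lemma~\ref{lem:relint}.2. Thus $S$ contains $F_K(x)$ for each of its own points, and Coro.~\ref{cor:extreme} (the equivalence of being an extreme set with being closed under passing to generated faces) yields at once that $S$ is an extreme set of $K$.

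The convexity step is where the argument has content. I would take $a,b\in S$ and $\lambda\in(0,1)$ (the cases $\lambda\in\{0,1\}$ being trivial), choose $c,d\in C$ with $a\in F_K(c)$ and $b\in F_K(d)$, and set $e=(1-\lambda)c+\lambda d$, which lies in $C$ because $C$ is convex. The goal is to show that the interpolated point $p=(1-\lambda)a+\lambda b$ lands in $F_K(e)\subset S$. By Thm.~\ref{thm:WeisShirokov} we have $c\in\rai(F_K(c))$ and $d\in\rai(F_K(d))$; applying Lemma~\ref{lem:riconv} to the two sets $F_K(c),F_K(d)$ with weights $1-\lambda,\lambda$ places $e$ in the relative algebraic interior of their convex hull $D$. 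Since $e$ then lies in $\rai(D)\cap F_K(e)$, with $D$ a convex subset of $K$ and $F_K(e)$ an extreme set, Lemma~\ref{lem:relint}.1 forces $D\subset F_K(e)$. Because $a\in F_K(c)\subset D$ and $b\in F_K(d)\subset D$ and $D$ is convex, we obtain $p\in D\subset F_K(e)\subset S$, completing the verification.

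The main obstacle is exactly this convexity step, and its crux is choosing a single generator $e\in C$ whose face swallows the entire segment from $a$ to $b$. The mechanism that makes this possible is the interplay of two results: Lemma~\ref{lem:riconv} manufactures a relative-algebraic-interior point of the convex hull $D$, and then Lemma~\ref{lem:relint}.1 (the absorption principle that $\rai$ of a convex set meeting an extreme set forces containment) absorbs all of $D$ into $F_K(e)$. Once these are in place, the extreme-set verification and the chain of inclusions are routine.
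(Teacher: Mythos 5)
Your proof is correct and follows essentially the same route as the paper: both establish the extreme-set property by noting that $S$ is a union of extreme sets, and both obtain convexity by feeding relative-algebraic-interior points into Lemma~\ref{lem:riconv} and then absorbing the resulting convex hull into an extreme set via Lemma~\ref{lem:relint}.1. The only cosmetic difference is that the paper applies Lemma~\ref{lem:riconv} to the segments $[a_i,b_i]$ supplied by Coro.~\ref{cor:facex} and absorbs the hull directly into $E$, whereas you apply it to the full faces $F_K(c),F_K(d)$ via Thm.~\ref{thm:WeisShirokov} and absorb into $F_K(e)$; both variants are valid.
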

\begin{proof}
The set $E:=\bigcup_{x\in C}F_K(x)$ is a union of extreme sets, and hence an
extreme set itself. We show that $E$ is convex. Let $a_i\in E$ and let 
$c_i\in C$ such that $a_i\in F_K(c_i)$, $i=1,2$. By Coro.~\ref{cor:facex},
there is a point $b_i\in K$ such that $c_i$ lies in the relative algebraic 
interior of $[a_i,b_i]$, $i=1,2$. Lemma~\ref{lem:riconv} shows that the point
$c:=(c_1+c_2)/2$ lies in the relative algebraic interior of the convex 
hull $D$ of $\{a_1,a_2,b_1,b_2\}$. Since $C$ is convex, it contains $c$.
Hence $c$ lies in $E$, and Lemma~\ref{lem:relint}.1 proves $D\subset E$. 
It follows that $[a_1,a_2]\subset D\subset E$.
\end{proof}
We define the \emph{face generated by a subset $S\subset K$} as the 
smallest face of $K$ containing $S$. We denote this face by $F_K(S)$.
Coro.~\ref{cor:face-gen-S} matches \cite[(3.3)]{Dubins1962}.
\par
\begin{Cor}\label{cor:face-gen-S}
Let $S\subset K$. Then $F_K(S)=\bigcup_{x\in C}F_K(x)$, where $C$ is the 
convex hull of $S$.
\end{Cor}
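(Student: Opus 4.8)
The plan is to abbreviate $E:=\bigcup_{x\in C}F_K(x)$ and to establish the two inclusions $F_K(S)\subset E$ and $E\subset F_K(S)$ separately. The decisive ingredient is already at hand: since $C$ is the convex hull of $S\subset K$ and $K$ is convex, $C$ is a convex subset of $K$, so Thm.~\ref{thm:conv} applies and shows that $E$ is a \emph{face} of $K$. This is the only nonroutine point; everything else follows from the defining minimality of $F_K(S)$ and from the convexity of faces.

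For the inclusion $F_K(S)\subset E$, I would note that $E$ is a face of $K$ containing $S$: every $s\in S$ lies in $C$, and $s\in F_K(s)\subset E$ gives $S\subset E$. As $F_K(S)$ is by definition the smallest face of $K$ containing $S$, we obtain $F_K(S)\subset E$.

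For the reverse inclusion $E\subset F_K(S)$, I would exploit that the face $F_K(S)$ is convex and contains $S$, hence contains its convex hull $C$, that is, $C\subset F_K(S)$. Fixing any $x\in C$, the membership $x\in F_K(S)$ together with Lemma~\ref{lem:relint}.2 yields $F_K(x)\subset F_K(S)$. Taking the union over $x\in C$ gives $E\subset F_K(S)$, and combining the two inclusions finishes the proof.

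I expect no real obstacle here: the substantive work---that a union of generated faces over a convex index set is again a face---has been carried out in Thm.~\ref{thm:conv}, and once $E$ is recognised as a face the corollary is a formal consequence of minimality and of the equivalence $x\in F\iff F_K(x)\subset F$ recorded in Lemma~\ref{lem:relint}.2.
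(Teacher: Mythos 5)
Your proposal is correct and follows essentially the same route as the paper's proof: both invoke Thm.~\ref{thm:conv} to recognise $\bigcup_{x\in C}F_K(x)$ as a face, and both use the convexity of a face containing $S$ together with Lemma~\ref{lem:relint}.2 to get the reverse inclusion. You are merely a bit more explicit in checking $S\subset E$, which the paper leaves implicit.
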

\begin{proof}
The union $F:=\bigcup_{x\in C}F_K(x)$ is a face of $K$ by Thm.~\ref{thm:conv}. 
Let $G$ be any face containing $S$. As $G$ is convex it includes $C$ and it
also includes the face $F_K(x)$ for every $x\in C$ by Lemma~\ref{lem:relint}.2. 
This proves $F\subset G$.
\end{proof}
\begin{Cor}
Let $x_i\in K$, $i=1,\ldots,n$. Let $S=\bigcup_{i=1}^nF_K(x_i)$. Let 
$\lambda_i>0$, $i=1,\ldots,n$, such that $\lambda_1+\ldots+\lambda_n=1$, 
and put $x=\lambda_1x_1+\ldots+\lambda_nx_n$. Then
\[
F_K(S)
=F_K(\{x_1,\ldots,x_n\})
=F_K(x)\,\text{.}
\]
\end{Cor}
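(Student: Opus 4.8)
The plan is to prove the two equalities separately, reducing everything to the monotonicity of the generated-face operator, to Lemma~\ref{lem:relint}.2, and to the extreme-set property of $F_K(x)$. First I would dispatch $F_K(S)=F_K(\{x_1,\dots,x_n\})$. Since $x_i\in F_K(x_i)\subset S$, the finite set is contained in $S$, so any face containing $S$ contains $\{x_1,\dots,x_n\}$, whence $F_K(\{x_1,\dots,x_n\})\subset F_K(S)$. Conversely, $F_K(\{x_1,\dots,x_n\})$ is a face containing every $x_i$, so by Lemma~\ref{lem:relint}.2 it contains each $F_K(x_i)$ and therefore contains $S=\bigcup_i F_K(x_i)$; being a face containing $S$, it must include the smallest such face $F_K(S)$. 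This gives the reverse inclusion. (Alternatively one may invoke Coro.~\ref{cor:face-gen-S} directly.)

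The substantive step is $F_K(\{x_1,\dots,x_n\})=F_K(x)$. The inclusion $F_K(x)\subset F_K(\{x_1,\dots,x_n\})$ is immediate, since $F_K(\{x_1,\dots,x_n\})$ is convex and contains every $x_i$, hence contains the convex combination $x$, and then Lemma~\ref{lem:relint}.2 applies. For the reverse inclusion it suffices to show $x_i\in F_K(x)$ for each $i$, for then $F_K(x)$ is a face containing $\{x_1,\dots,x_n\}$. Here I would fix $i$, treat $n=1$ as trivial (then $x=x_1$), and for $n\ge2$ write $x=\lambda_i x_i+(1-\lambda_i)\bar x_i$ with $\bar x_i=\frac{1}{1-\lambda_i}\sum_{j\ne i}\lambda_j x_j\in K$ and $\lambda_i\in(0,1)$. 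If $\bar x_i=x_i$ then $x=x_i$ and we are done; otherwise $x$ lies in the open segment $(x_i,\bar x_i)$ whose endpoints are in $K$, so the extreme-set property of the face $F_K(x)$ forces $[x_i,\bar x_i]\subset F_K(x)$, whence $x_i\in F_K(x)$.

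I expect the only delicate point to be this last inclusion $F_K(\{x_1,\dots,x_n\})\subset F_K(x)$: one must \emph{recover} each generator $x_i$ from the single point $x$, and the strict positivity of all $\lambda_i$ (which guarantees $\lambda_i\in(0,1)$ when $n\ge2$, so that $x$ is genuinely interior to the segment $[x_i,\bar x_i]$) is exactly what drives the extreme-set argument; the degenerate case $\bar x_i=x_i$ has to be separated out. A cleaner route that sidesteps the case analysis runs as follows: by Thm.~\ref{thm:WeisShirokov} we have $x_i\in\rai(F_K(x_i))$, so Lemma~\ref{lem:riconv} applied with the convex sets $F_K(x_i)$ yields $x\in\rai(C)$, where $C$ is the convex hull of $S$. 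Since then $x\in\rai(C)\cap F_K(x)$, Lemma~\ref{lem:relint}.1 gives $C\subset F_K(x)$, so $F_K(S)=F_K(C)\subset F_K(x)$; the opposite inclusion follows from $x\in C\subset F_K(S)$ together with Lemma~\ref{lem:relint}.2.
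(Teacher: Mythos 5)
Your argument is correct, and for the substantive inclusion $F_K(\{x_1,\dots,x_n\})\subset F_K(x)$ it takes a genuinely different route from the paper. The paper disposes of the two easy inclusions in one stroke via Coro.~\ref{cor:face-gen-S} and then applies Lemma~\ref{lem:riconv} to the singletons $\{x_i\}$ to place $x$ in $\rai(C)$ for $C$ the convex hull of $\{x_1,\dots,x_n\}$, after which Lemma~\ref{lem:relint}.1 gives $C\subset F_K(x)$ and Lemma~\ref{lem:relint}.2 pulls in each $F_K(x_i)$. You instead recover each generator directly: writing $x=\lambda_i x_i+(1-\lambda_i)\bar x_i$ with $\bar x_i$ the normalized barycenter of the remaining points, the strict positivity of the $\lambda_j$ puts $x$ in the open segment $(x_i,\bar x_i)$, and the extreme-set property of $F_K(x)$ forces $x_i\in F_K(x)$; you correctly isolate the degenerate cases $n=1$ and $\bar x_i=x_i$. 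This is more elementary --- it needs neither Lemma~\ref{lem:riconv} nor Thm.~\ref{thm:WeisShirokov}, only the definition of an extreme set and Lemma~\ref{lem:relint}.2 --- at the cost of a small case analysis that the paper's relative-algebraic-interior machinery absorbs automatically. Your closing alternative (applying Lemma~\ref{lem:riconv} to the faces $F_K(x_i)$ via Thm.~\ref{thm:WeisShirokov}) is essentially the paper's proof, only with $K_i=F_K(x_i)$ in place of $K_i=\{x_i\}$, and is equally valid.
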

\begin{proof}
As the inclusions $F_K(S)\supset F_K(\{x_1,\ldots,x_n\})\supset F_K(x)$ 
follow from Coro.~\ref{cor:face-gen-S}, it suffices to prove 
$F_K(S)\subset F_K(x)$. Lemma~\ref{lem:riconv} shows that $x$ lies in 
the relative algebraic interior of the convex hull $C$ of 
$\{x_1,\ldots,x_n\}$. Hence $C$ is included in $F_K(x)$ by 
Lemma~\ref{lem:relint}.1. Lemma~\ref{lem:relint}.2 then shows that 
$F_K(x_i)$ is included in $F_K(x)$ for all $i=1,\ldots,n$, which proves 
$F_K(S)\subset F_K(x)$.
\end{proof}
Coro.~\ref{cor:faces-rel-open} matches \cite[(4.7)]{Dubins1962}.
\par
\begin{Cor}\label{cor:faces-rel-open}
Let $K,L\subset V$ be convex sets and let $K$ be relative algebraically open. Then every extreme 
set resp.\ face of $K\cap L$ is the intersection of $K$ and an extreme set resp.\ 
face of $L$. 
\end{Cor}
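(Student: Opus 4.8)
The plan is to reduce everything to a single structural identity. Because $K$ is relative algebraically open we have $K=\rai(K)$, so every point $x\in K$ lies in $\rai(K)$; applying Lemma~\ref{lem:relint}.3 with the face $K$ of itself yields $F_K(x)=K$ for all $x\in K$. Combined with Prop.~\ref{pro:intersection}.1 this gives, for every $x\in K\cap L$,
\[
F_{K\cap L}(x)=F_K(x)\cap F_L(x)=K\cap F_L(x).
\]
This identity is the engine of the whole argument.

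Next I would treat the two cases uniformly via the union characterizations. Let $E$ be an extreme set of $K\cap L$. By the implication 1)~$\Rightarrow$~3) of Coro.~\ref{cor:extreme} we have $E=\bigcup_{x\in E}F_{K\cap L}(x)$, and substituting the identity above and distributing the intersection over the union gives
\[
E=\bigcup_{x\in E}\big(K\cap F_L(x)\big)=K\cap\Big(\bigcup_{x\in E}F_L(x)\Big).
\]
Setting $E':=\bigcup_{x\in E}F_L(x)$, it remains only to identify what kind of set $E'$ is. For the extreme-set claim this is immediate: $E'$ is a union of faces of $L$, each of which is an extreme set of $L$, and any union of extreme sets is an extreme set (as noted in Sec.~\ref{sec:definitions}); hence $E=K\cap E'$ with $E'$ an extreme set of $L$.

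For the face claim I would use that a face $F$ of $K\cap L$ is convex and satisfies $F\subset K\cap L\subset L$, so $F$ is a convex subset of $L$. Then Thm.~\ref{thm:conv} shows that $F':=\bigcup_{x\in F}F_L(x)$ is a \emph{face} of $L$, and the computation above (with $E=F$) yields $F=K\cap F'$. The empty extreme set/face is covered automatically, since both the empty union and $K\cap\emptyset$ equal $\emptyset$, which is a face of $L$. I do not anticipate a serious obstacle: the only nontrivial observation is that relative algebraic openness collapses $F_K(x)$ to $K$, after which Prop.~\ref{pro:intersection}.1 together with the union characterizations in Coro.~\ref{cor:extreme} and Thm.~\ref{thm:conv} do all the work. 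The step requiring the most care is the face case, where one must check the hypotheses of Thm.~\ref{thm:conv} (convexity of $F$ and the inclusion $F\subset L$) so that the relevant union of generated faces is genuinely a face and not merely an extreme set.
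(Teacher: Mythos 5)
Your proposal is correct and follows essentially the same route as the paper: both use $F_K(x)=K$ (via Lemma~\ref{lem:relint}.3 and relative algebraic openness), Prop.~\ref{pro:intersection}.1, the union characterization of extreme sets from Coro.~\ref{cor:extreme}, and Thm.~\ref{thm:conv} for the face case. The only additions are your explicit handling of the empty set and the remark that $F\subset L$, both of which the paper leaves implicit.
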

\begin{proof}
Let $E$ be an extreme set of $K\cap L$. Coro.~\ref{cor:extreme}.3 
and Prop.~\ref{pro:intersection}.1 show
\[\textstyle
E
=\bigcup_{x\in E}F_{K\cap L}(x)
=\textstyle\bigcup_{x\in E}\left(F_K(x)\cap F_L(x)\right)
\,\text{.}
\]
As $K$ is relative algebraically open, Lemma~\ref{lem:relint}.3 implies
$F_K(x)=K$ for all $x\in K$, hence
\[\textstyle
E=\textstyle\bigcup_{x\in E}\left(K\cap F_L(x)\right)
=\textstyle K\cap\bigcup_{x\in E}F_L(x)
\,\text{.}
\]
The set $\bigcup_{x\in E}F_L(x)$ is a union of extreme sets of $L$ and 
hence an extreme set of $L$ itself. If $E$ is a face of $K\cap L$, then 
$E$ is convex and Thm.~\ref{thm:conv} completes the proof.
\end{proof}
%
%
\section{Dubins' terminology}
\label{sec:Dubins-Face}
A \emph{d-extreme set} of $K$ is a subset $E$ of $K$ including the open segment 
$(x,y)$ for all points $x\neq y$ in $K$ for which $(x,y)$ intersects $E$. A 
point $x\in K$ is a \emph{d-extreme point} of $K$ if $\{x\}$ is a d-extreme set 
of $K$. A \emph{d-face} of $K$ is a convex d-extreme set of $K$. Clearly, any 
union or intersection of d-extreme sets of $K$ is a d-extreme set of $K$. 
Hence, the intersection of all d-faces containing a point $x\in K$ is a d-face 
of $K$, which we call the \emph{d-face of $K$ generated by $x$}. This is the 
smallest d-face of $K$ containing $x$.
\par
\begin{Thm}\label{thm:d-extreme}
A subset of $K$ is a d-extreme set of $K$ if and only if it is equal to the 
union $\bigcup_{x\in S}\rai(F_K(x))$ for some subset $S\subset K$. The d-face 
generated by $x\in K$ is $\rai(F_K(x))$. A point in $K$ is a d-extreme point 
of $K$ if and only if it is an extreme point of $K$.
\end{Thm}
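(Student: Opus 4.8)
The plan is to deduce all three assertions from one fact: for every $x\in K$ the set $\rai(F_K(x))$ is a d-extreme set of $K$, with Prop.~\ref{pro:char2-open-segments} doing the essential work. I would first prove this fact. Suppose $a\neq b$ lie in $K$ and the open segment $(a,b)$ meets $\rai(F_K(x))$ in a point $w$. Because $w\in\rai(F_K(x))$, Lemma~\ref{lem:relint}.3 gives $F_K(w)=F_K(x)$, hence $\rai(F_K(w))=\rai(F_K(x))$. Now $w\in K$ and $a,b\in K$ with $w\in(a,b)$, so the segment $(a,b)$ occurs among the open segments whose union is $\rai(F_K(w))$ according to Prop.~\ref{pro:char2-open-segments}; thus $(a,b)\subseteq\rai(F_K(w))=\rai(F_K(x))$. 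This is exactly the d-extreme property. Since $\rai(F_K(x))$ is convex by Lemma~\ref{lem:ri-convex} and contains $x$ by Thm.~\ref{thm:WeisShirokov}, it is a d-face of $K$ that contains $x$.

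Next I would record an absorption principle: if $D$ is any d-extreme set of $K$ with $x\in D$, then $\rai(F_K(x))\subseteq D$. Given $w\in\rai(F_K(x))$, Prop.~\ref{pro:char2-open-segments} furnishes $y,z\in K$ with $x,w\in(y,z)$; since $x\in D\cap(y,z)$, d-extremeness of $D$ yields $(y,z)\subseteq D$, so $w\in D$. Letting $D$ run over all d-faces containing $x$ shows that $\rai(F_K(x))$ is contained in each of them, and since it is itself such a d-face it coincides with their intersection. Hence $\rai(F_K(x))$ is the d-face generated by $x$, which is the second assertion.

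The characterization of d-extreme sets then follows. If $E=\bigcup_{x\in S}\rai(F_K(x))$, then $E$ is a union of d-extreme sets and so is d-extreme. Conversely, for a d-extreme set $E$ I would take $S=E$: the absorption principle gives $\rai(F_K(x))\subseteq E$ for each $x\in E$, hence $\bigcup_{x\in E}\rai(F_K(x))\subseteq E$, while $x\in\rai(F_K(x))$ by Thm.~\ref{thm:WeisShirokov} gives the reverse inclusion. For the final statement, observe that for distinct $a,b\in K$ an open segment $(a,b)$ can never be contained in a singleton, so $\{x\}$ is a d-extreme set precisely when no such $(a,b)$ contains $x$; this is word for word the condition that $\{x\}$ be an extreme set. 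Therefore a point is d-extreme if and only if it is extreme.

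The only substantive step is the first one, and its crux is that Prop.~\ref{pro:char2-open-segments} lets me re-base the union of open segments at an arbitrary interior point $w$ once I know $F_K(w)=F_K(x)$. I expect the main difficulty to be merely bookkeeping, namely keeping the generator of each face straight while passing between the base points $x$ and $w$, rather than any conceptual obstacle.
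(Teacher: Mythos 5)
Your proof is correct and follows essentially the same route as the paper's: both directions rest on Prop.~\ref{pro:char2-open-segments} (re-basing the union of open segments at an interior point, which the paper does via Coro.~\ref{cor:convex-subset} and Coro.~\ref{cor:char-ri-Fx} where you use Lemma~\ref{lem:relint}.3), and the remaining assertions are deduced exactly as in the paper. The only cosmetic difference is that you isolate the absorption principle as a named intermediate step, which the paper folds into the first direction.
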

\begin{proof}
Let $E$ be a d-extreme set of $K$ and let $x\in E$. As $x\in\rai(F_K(x))\subset E$ 
holds by Prop.~\ref{pro:char2-open-segments}, we have 
$E=\bigcup_{x\in E}\rai(F_K(x))$. Conversely, let $S\subset K$ be any subset and 
assume that a point $y$ lies in the open segment $(a,b)$ with endpoints $a\neq b$ 
in $K$ and in the set $\rai(F_K(x))$ for some $x\in S$. The open segment $(a,b)$ 
is included in $\rai(F_K(y))$ by Coro.~\ref{cor:convex-subset} and hence in 
$\rai(F_K(x))$, as $\rai(F_K(y))=\rai(F_K(x))$ holds by Coro.~\ref{cor:char-ri-Fx}.
\par
The set $\rai(F_K(x))$ contains $x$ by Thm.~\ref{thm:WeisShirokov} and is convex 
by Lemma~\ref{lem:ri-convex}. Hence, it is the smallest d-face containing $x$ by 
the first part of this theorem.
\par
That ``d-extreme point'' and ``extreme point'' are equivalent terms is 
implied by the fact that a singleton cannot contain a segment no matter 
whether it is an open segment or a closed segment.
\end{proof}
Coro.~\ref{cor:Dubins21} provides an alternative proof of 
\cite[Thm.~2.1]{Dubins1962}.
\par
\begin{Cor}\label{cor:Dubins21}
For all $x\in K$, the d-face of $K$ generated by $x$ is equal to $\rai(F_K(x))$ 
and equal to the greatest relative algebraically open convex subset of $K$ 
containing $x$.
\end{Cor}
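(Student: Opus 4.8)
The plan is to recognize that both asserted equalities have already been established in the preceding results, so the proof reduces to assembling them. The statement has two halves, and each one is a verbatim consequence of an earlier result. I would treat the corollary as a translation between Dubins' terminology and the machinery developed in Sec.~\ref{sec:novel-results-I}.

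For the first equality, that the d-face of $K$ generated by $x$ equals $\rai(F_K(x))$, I would simply cite the second sentence of Thm.~\ref{thm:d-extreme}, which already identifies $\rai(F_K(x))$ as the smallest d-face of $K$ containing $x$. For the second equality, that $\rai(F_K(x))$ is the greatest relative algebraically open convex subset of $K$ containing $x$, I would cite Coro.~\ref{cor:convex-subset} directly, since that corollary states exactly this. Chaining the two citations yields the full claim.

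Since both halves follow immediately from statements proved earlier, there is no genuine obstacle here; the real content lies upstream, in Thm.~\ref{thm:d-extreme} (which rests on Prop.~\ref{pro:char2-open-segments} and Coro.~\ref{cor:convex-subset}) and in Thm.~\ref{thm:ri2=ri} (which feeds Coro.~\ref{cor:convex-subset}). The point worth emphasizing in the write-up is the payoff flagged in the lead-in: because Coro.~\ref{cor:convex-subset} was obtained independently of Dubins' framework, reading it through the lens of Thm.~\ref{thm:d-extreme} recovers \cite[Thm.~2.1]{Dubins1962} as a corollary rather than as a starting assumption. I would therefore keep the proof to one or two sentences, invoking Thm.~\ref{thm:d-extreme} for the identification with the d-face and Coro.~\ref{cor:convex-subset} for the maximality, and let the surrounding discussion carry the interpretive weight.
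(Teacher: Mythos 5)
Your proposal is correct and matches the paper's own proof exactly: the paper likewise cites Thm.~\ref{thm:d-extreme} for the identification of the d-face generated by $x$ with $\rai(F_K(x))$ and Coro.~\ref{cor:convex-subset} for the maximality statement. Nothing further is needed.
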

\begin{proof}
Thm.~\ref{thm:d-extreme} shows that $\rai(F_K(x))$ is the d-face of $K$ generated 
by $x$. That $\rai(F_K(x))$ is the greatest relative algebraically open convex subset of $K$ 
containing $x$ is proved in Coro.~\ref{cor:convex-subset}.
\end{proof}
By definition \cite{Dubins1962}, an \emph{elementary d-face} of $K$ is a
nonempty relative algebraically open d-face of $K$. 
\par
\begin{Cor}\label{cor:elementary-d-faces}
Let $K\neq\emptyset$. A subset of $K$ is an elementary d-face of $K$ 
if and only if it equals $\rai(F_K(x))$ for some $x\in K$.
\end{Cor}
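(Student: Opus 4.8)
The plan is to establish the stated equivalence by sandwiching an arbitrary elementary d-face between the two characterizations of $\rai(F_K(x))$ already in hand: it is simultaneously the \emph{smallest} d-face of $K$ containing $x$ (Thm.~\ref{thm:d-extreme}) and the \emph{greatest} relative algebraically open convex subset of $K$ containing $x$ (Coro.~\ref{cor:Dubins21}, or equivalently Coro.~\ref{cor:convex-subset}). No new construction is required; the corollary is essentially an assembly of these facts, so I anticipate no genuine obstacle, only the bookkeeping of invoking the right prior result for each inclusion.

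For the direction ``$\Leftarrow$'', I would fix $x\in K$ and check that $\rai(F_K(x))$ satisfies the three defining properties of an elementary d-face. It is nonempty because $x\in\rai(F_K(x))$ by Thm.~\ref{thm:WeisShirokov}; it is a d-face because Thm.~\ref{thm:d-extreme} identifies it as the d-face of $K$ generated by $x$; and it is relative algebraically open because $F_K(x)$ is a convex set (being a face) and Thm.~\ref{thm:ri2=ri} guarantees that the relative algebraic interior of any convex set is itself relative algebraically open. Hence $\rai(F_K(x))$ is an elementary d-face of $K$.

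For the direction ``$\Rightarrow$'', let $E$ be an elementary d-face of $K$, that is, a nonempty relative algebraically open d-face. Since $E\neq\emptyset$, I choose a point $x\in E$. On one hand, $E$ is a relative algebraically open convex subset of $K$ containing $x$, so Coro.~\ref{cor:convex-subset} yields $E\subset\rai(F_K(x))$. On the other hand, $E$ is a d-face of $K$ containing $x$, so it contains the d-face of $K$ generated by $x$, which is $\rai(F_K(x))$ by Thm.~\ref{thm:d-extreme}; this gives $\rai(F_K(x))\subset E$. Combining the two inclusions produces $E=\rai(F_K(x))$, which completes the argument. The only point worth stating carefully is that the two bounding properties really do apply to $E$: its relative algebraic openness and convexity license Coro.~\ref{cor:convex-subset}, while its being a d-face licenses the minimality from Thm.~\ref{thm:d-extreme}.
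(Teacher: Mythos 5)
Your proof is correct and follows essentially the same route as the paper: both directions rest on Thm.~\ref{thm:d-extreme} together with the maximality of $\rai(F_K(x))$ among relative algebraically open convex subsets of $K$ (Coro.~\ref{cor:convex-subset}), which the paper invokes in the packaged form of Thm.~\ref{thm:maximal-elements} and the partition $\mathfrak{U}$. Your two-inclusion sandwich (smallest d-face containing $x$ versus greatest relative algebraically open convex subset containing $x$) is a slightly more explicit rendering of the same argument, and you also spell out the converse direction that the paper leaves implicit.
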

\begin{proof}
Let $F$ be a d-face of $K$. By Thm.~\ref{thm:d-extreme}, $F$ is a union of a 
subfamily of $\mathfrak{U}:=\left\{\rai(F_K(x)) \colon x \in K\right\}$. 
If $F$ is an elementary d-face of $K$, then $F$ cannot be a union of more than 
one element of $\mathfrak{U}$, because the elements of $\mathfrak{U}$ are the 
maximal relative algebraically open convex subsets of $K$ by 
Thm.~\ref{thm:maximal-elements}.
\end{proof}
\begin{Cor}\label{cor:2d-faces}
Let $K\neq\emptyset$. The following statements are equivalent.
\begin{enumerate}
\item
$K$ is relative algebraically open.
\item
$K$ has exactly two d-faces (which are $\emptyset$ and $K$).
\item
$K$ has exactly two faces (which are $\emptyset$ and $K$).
\end{enumerate}
\end{Cor}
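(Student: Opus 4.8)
The plan is to route all three statements through a single pivot condition, namely $(\ast)$: $F_K(x)=K$ for every $x\in K$. Since $K\neq\emptyset$, the sets $\emptyset$ and $K$ are always two distinct faces, and likewise two distinct d-faces, of $K$; hence in both 2) and 3) the phrase ``exactly two'' means precisely that there are no further faces (resp. d-faces). So my task reduces to proving that each of the three statements is equivalent to $(\ast)$.

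First I would establish $1)\iff(\ast)$. If $K=\rai(K)$, then every $x\in K$ lies in $\rai(K)$, and Lemma~\ref{lem:relint}.3 applied to the face $F=K$ yields $F_K(x)=K$. Conversely, if $(\ast)$ holds, then Thm.~\ref{thm:WeisShirokov} gives $x\in\rai(F_K(x))=\rai(K)$ for every $x\in K$, whence $K\subset\rai(K)\subset K$ and $K$ is relative algebraically open.

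Next I would prove $3)\iff(\ast)$. Assuming $(\ast)$, let $F$ be a nonempty face of $K$ and pick $x\in F$; the minimality of $F_K(x)$ gives $K=F_K(x)\subset F\subset K$, so $F=K$, and the only faces are $\emptyset$ and $K$. Conversely, if $\emptyset$ and $K$ are the only faces, then for each $x\in K$ the face $F_K(x)$ is nonempty (it contains $x$) and therefore equals $K$.

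Finally I would prove $2)\iff(\ast)$ using Thm.~\ref{thm:d-extreme}, which identifies the d-face generated by $x$ as $\rai(F_K(x))$ and presents every d-face as a union $\bigcup_{x\in S}\rai(F_K(x))$. Under $(\ast)$, by the already-proved equivalence $1)\iff(\ast)$ we have $\rai(F_K(x))=\rai(K)=K$ for all $x$, so the only such unions are $\emptyset$ (from $S=\emptyset$) and $K$ (from any nonempty $S$); thus $K$ has exactly two d-faces. Conversely, if $\emptyset$ and $K$ are the only d-faces, then for each $x$ the d-face $\rai(F_K(x))$ is nonempty, since $x\in\rai(F_K(x))$ by Thm.~\ref{thm:WeisShirokov}, and hence equals $K$. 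The one step needing a touch of care is recovering $(\ast)$ from this: the inclusion chain $\rai(F_K(x))\subset F_K(x)\subset K$ together with $\rai(F_K(x))=K$ forces $F_K(x)=K$. This sandwich is the only nonformal move; the genuine content already resides in Thm.~\ref{thm:d-extreme} and Thm.~\ref{thm:WeisShirokov}, so I expect no serious obstacle beyond keeping the faces/d-faces/$\rai$ correspondence straight.
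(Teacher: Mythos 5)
Your proof is correct, but it is organized differently from the paper's. The paper argues in a cycle 1) $\Rightarrow$ 2) $\Rightarrow$ 3) $\Rightarrow$ 1): for 1) $\Rightarrow$ 2) it invokes the partition $\mathfrak{U}=\{\rai(F_K(x)):x\in K\}$ of Coro.~\ref{cor:partition} (if $\rai(K)=K$ is nonempty it must be the sole block of the partition) together with Thm.~\ref{thm:d-extreme}; for 2) $\Rightarrow$ 3) it uses the one-line observation that every face is a d-face (since every extreme set is a d-extreme set); and for 3) $\Rightarrow$ 1) it again appeals to Coro.~\ref{cor:partition} to conclude $\{\rai(K)\}$ is a partition of $K$. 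You instead route all three statements through the pivot condition $F_K(x)=K$ for all $x\in K$, using only Lemma~\ref{lem:relint}, Thm.~\ref{thm:WeisShirokov}, and Thm.~\ref{thm:d-extreme}, and each of your six implications checks out (including the sandwich $K=\rai(F_K(x))\subset F_K(x)\subset K$ at the end). What the paper's route buys is brevity --- the cycle needs only three implications, and the step 2) $\Rightarrow$ 3) is essentially free --- while your hub-and-spoke layout avoids the partition machinery of Coro.~\ref{cor:partition} entirely and makes the logical dependence of each statement on the pivot condition explicit, at the cost of proving six implications instead of three.
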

\begin{proof}
Assuming 1), $\rai(K)=K$ is nonempty. Hence, the partition of $K$
\[
\mathfrak{U}=\left\{\rai(F_K(x)) \colon x \in K\right\}
\]
contains $\rai(K)$ as one of its elements by Coro.~\ref{cor:partition}.
Hence, $\emptyset$ and $K$ are the only d-faces of $K$
by Thm.~\ref{thm:d-extreme}.
\par
The statement 2) implies that $K$ has at most two faces, as every face of 
$K$ is a d-face of $K$. Since $K\neq\emptyset$, the convex set $K$ has exactly 
two faces.
\par
Assuming 3), the convex set $K$ has only one nonempty face. Then $\{\rai(K)\}$ 
is a partition of $K$ by Coro.~\ref{cor:partition}, which implies $K=\rai(K)$.
\end{proof}
%
%
%
\section{Examples 1: Spaces of probability measures}
\label{sec:example-prob-measures}
Let $\cP=\cP(\Omega,\cA)$ denote the convex set of probability measures on a 
measurable space $(\Omega,\cA)$. A probability measure $\lambda\in\cP$ is 
\emph{absolutely continuous} with respect to $\mu\in\cP$, symbolically 
$\lambda\ll\mu$, if every $\mu$-null set is a $\lambda$-null set. The 
measures are \emph{equivalent}, $\lambda\equiv\mu$, if $\lambda\ll\mu$ and 
$\mu\ll\lambda$. If $\lambda\ll\mu$, then we denote by 
$\frac{\dr\lambda}{\dr\mu}:\Omega\to[0,\infty)$ the 
\emph{Radon-Nikodym derivative} of $\lambda$ with respect to $\mu$, which is a 
measurable function satisfying 
$\lambda(A)=\int_A \frac{\dr\lambda}{\dr\mu}\,\dr\mu$ for all $A\in\cA$, see 
for example Halmos \cite[Sec. 31]{Halmos1974}. 
\par
\begin{Lem}
For every $\mu\in\cP$, the set $\{\lambda\in\cP:\lambda\ll\mu\}$ is a face
of $\cP$.
\end{Lem}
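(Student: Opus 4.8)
The plan is to show directly that $F:=\{\lambda\in\cP:\lambda\ll\mu\}$ is a face of $\cP$, i.e.\ a convex extreme set. I would verify convexity first, as it is immediate: if $\lambda_1,\lambda_2\ll\mu$ and $t\in[0,1]$, then any $\mu$-null set $A$ is null for both $\lambda_1$ and $\lambda_2$, hence null for $(1-t)\lambda_1+t\lambda_2$; so $F$ is closed under convex combinations.

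The substance lies in the extreme-set property. I would use the characterization from Section~\ref{sec:definitions}: $F$ is an extreme set of $\cP$ provided that whenever an open segment $(\nu_1,\nu_2)$ with $\nu_1\neq\nu_2$ in $\cP$ meets $F$, the whole closed segment $[\nu_1,\nu_2]$ lies in $F$. So suppose $\lambda=(1-t)\nu_1+t\nu_2\in F$ for some $t\in(0,1)$, meaning $\lambda\ll\mu$; I must deduce $\nu_1\ll\mu$ and $\nu_2\ll\mu$. The key observation is that absolute continuity with respect to $\mu$ is preserved under the passage to components precisely because the coefficients are positive: if $A$ is a $\mu$-null set, then $\lambda(A)=0$, so $(1-t)\nu_1(A)+t\nu_2(A)=0$; since $\nu_1(A),\nu_2(A)\geq 0$ and the weights $1-t,t$ are strictly positive, both $\nu_1(A)=0$ and $\nu_2(A)=0$. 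Hence $\nu_1,\nu_2\ll\mu$, so $[\nu_1,\nu_2]\subset F$ by the convexity just established.

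The main (and only genuine) obstacle is a conceptual one rather than a computational one: recognizing that the extreme-set condition reduces to the nonnegativity of measures together with the strict positivity of the weights in an open segment. Once this is seen, no measure-theoretic machinery beyond the definition of absolute continuity is required; in particular the Radon-Nikodym derivative, though available, is not needed for this lemma. I would present the argument in the order convexity, then extremality, concluding that $F$ is a convex extreme set and therefore a face of $\cP$.
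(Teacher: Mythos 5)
Your proof is correct and follows essentially the same route as the paper: the extreme-set property is obtained, exactly as in the paper, from the fact that a $\mu$-null set is $\lambda$-null and hence $\nu_i$-null because the weights in an open segment are strictly positive and measures are nonnegative. The only (inessential) difference is in the convexity step, where the paper invokes the Radon--Nikodym derivative of a convex combination while you argue directly with null sets; your version is slightly more elementary and equally valid.
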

\begin{proof}
Let $D(\mu):=\{\lambda\in\cP:\lambda\ll\mu\}$.
The set $D(\mu)$ is an extreme set of $\cP$. Let $\lambda_1,\lambda_2\in\cP$, 
$s\in(0,1)$, and $\lambda:=(1-s)\lambda_1+s\lambda_2$ be contained in $D(\mu)$.
If $A$ is a $\mu$-null set, then $A$ is a $\lambda$-null set and hence 
a $\lambda_i$-null set for $i=1,2$.
\par
The set $D(\mu)$ is convex, as $(1-s)\lambda_1+s\lambda_2$ has the Radon-Nikodym 
derivative $(1-s)\frac{\dr\lambda_1}{\dr\mu}+s\frac{\dr\lambda_2}{\dr\mu}$ with 
respect to $\mu$ for all $\lambda_1,\lambda_2\in D(\mu)$ and $s\in[0,1]$. 
\end{proof}
If $\mu\in\cP$, then we say a proposition $\pi(\omega)$, $\omega\in\Omega$, 
is true \emph{$\mu$-almost surely}, which we abbreviate as \emph{$\mu$-a.s.}, 
if $\mu(\{\omega\in\Omega \colon \text{$\pi(\omega)$ is false} \})=0$.
\par
\begin{Thm}\label{thm:faces-propm}
Let $\lambda,\mu\in\cP$. The following assertions are equivalent.
\begin{enumerate}
\item
The measure $\lambda$ lies in the face $F_\cP(\mu)$ of $\cP$ generated by $\mu$.
\item
There is $c\in[1,\infty)$ such that $\lambda(A)\leq c\,\mu(A)$ holds for all 
$A\in\cA$.
\item
We have $\lambda\ll\mu$ and there is $c\in[1,\infty)$ such that
$\frac{\dr\lambda}{\dr\mu}\leq c$ holds $\mu$-a.s..
\end{enumerate}
\end{Thm}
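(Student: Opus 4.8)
The plan is to prove the two biconditionals $1)\Leftrightarrow 2)$ and $2)\Leftrightarrow 3)$ separately, the first resting on Alfsen's formula and the second on the Radon-Nikodym theorem.

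For $1)\Leftrightarrow 2)$, I would invoke Prop.~\ref{pro:Alfsen}, which gives $F_\cP(\mu)=S_\cP(\mu)$, so that $\lambda\in F_\cP(\mu)$ holds precisely when there is $\epsilon>0$ with $\mu+\epsilon(\mu-\lambda)\in\cP$. The signed measure $\nu:=\mu+\epsilon(\mu-\lambda)=(1+\epsilon)\mu-\epsilon\lambda$ automatically has total mass $\nu(\Omega)=1$, so it belongs to $\cP$ exactly when it is non-negative on every $A\in\cA$, that is, when $\lambda(A)\leq(1+\epsilon^{-1})\mu(A)$ holds for all $A$. The substitution $c=1+\epsilon^{-1}$ is a bijection between $\epsilon\in(0,\infty)$ and $c\in(1,\infty)$, which yields $2)$.

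The one point requiring care is the boundary value $c=1$. I would observe that $\lambda(A)\le\mu(A)$ for all $A$, applied also to complements, forces $\lambda=\mu$ by means of $\lambda(\Omega)=\mu(\Omega)=1$; since $\lambda=\mu$ is equally witnessed by any $c>1$, the interval in $2)$ may be taken to be $[1,\infty)$ without loss. Evaluating $2)$ at $A=\Omega$ also confirms that $c\geq 1$ is the correct normalisation. A minor background fact used here is that a finite signed measure which is non-negative on every measurable set is itself a non-negative measure, so that $\nu(A)\ge 0$ for all $A$ indeed places $\nu$ in $\cP$.

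For $2)\Leftrightarrow 3)$, the direction $3)\Rightarrow 2)$ is immediate by integrating the almost sure bound $\tfrac{\dr\lambda}{\dr\mu}\leq c$ against $\mu$ over $A$, with $c\geq 1$ recovered again at $A=\Omega$. For $2)\Rightarrow 3)$, the inequality $\lambda(A)\leq c\,\mu(A)$ forces $\lambda\ll\mu$ by testing on a $\mu$-null set, so the Radon-Nikodym derivative exists since both measures are finite. I would then argue by contradiction that $\tfrac{\dr\lambda}{\dr\mu}\leq c$ holds $\mu$-a.s., since otherwise the superlevel set $B=\{\tfrac{\dr\lambda}{\dr\mu}>c\}$ would satisfy $\mu(B)>0$ and hence $\lambda(B)=\int_B\tfrac{\dr\lambda}{\dr\mu}\,\dr\mu>c\,\mu(B)$, contradicting $2)$. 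The main obstacle in the whole argument is thus bookkeeping rather than substance: handling the $c=1$ endpoint cleanly and invoking the signed-measure characterisation correctly; no deeper difficulty is expected.
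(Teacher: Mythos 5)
Your proposal is correct and follows essentially the same route as the paper: Alfsen's formula (Prop.~\ref{pro:Alfsen}) reduces 1) to the nonnegativity of the signed measure $\mu+\epsilon(\mu-\lambda)$, giving 2), and the equivalence 2)~$\Leftrightarrow$~3) is handled via the Radon--Nikodym derivative exactly as in the paper. Your extra care with the endpoint $c=1$ (noting it forces $\lambda=\mu$) and with the bijection $c=1+\epsilon^{-1}$ is a welcome refinement of a point the paper passes over silently, but it does not change the argument.
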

\begin{proof}
Prop.~\ref{pro:Alfsen} shows that a probability measure $\lambda\in\cP$ lies in 
$F_\cP(\mu)$ if and only if there is $\epsilon>0$ such that 
$\mu+\epsilon(\mu-\lambda)\in\cP$. The latter condition is equivalent to the 
nonnegativity of the set function $\mu+\epsilon(\mu-\lambda)$, and hence to 
part 2) of the theorem. It remains to prove the equivalence  
2)~$\Leftrightarrow$~3).
\par
If $\lambda\ll\mu$ and if there is $c\geq 1$ such that 
$\frac{\dr\lambda}{\dr\mu}(\omega)\leq c$ holds $\mu$-a.s.,
then part~2) follows (with the same constant $c$),
\[\textstyle
\lambda(A)=\int_A \frac{\dr\lambda}{\dr\mu}\,\dr\mu
\leq c\,\mu(A)
\quad 
\text{for all $A\in\cA$.}
\]
Conversely, if $\lambda\ll\mu$ is false, then there is $A\in\cA$ such that
$\lambda(A)>\mu(A)=0$, making part~2) impossible. If $\lambda\ll\mu$ is true
but $\frac{\dr\lambda}{\dr\mu}$ is not bounded $\mu$-a.s., then 
for every $c>0$ there is $A\in\cA$ such that $\mu(A)>0$ and 
$\frac{\dr\lambda}{\dr\mu}(\omega)>c$ holds for all $\omega\in A$. 
Then
\[\textstyle
\lambda(A)=\int_A \frac{\dr\lambda}{\dr\mu}\,\dr\mu
>c\,\mu(A)
\]
proves that part 2) fails. 
\end{proof}
Dubins in \cite{Dubins1962} asserts that a probability measure $\lambda\in\cP$
is contained in the smallest d-face of $\cP$ generated by $\mu\in\cP$ if and 
only if $\lambda\ll\mu$ and there exists $c>0$, such that for all $A\in\cA$ we 
have $\mu(A)\leq c\,\lambda(A)\leq c^2\mu(A)$. Coro.~\ref{cor:Dubins21} 
translates this assertion into Coro.~\ref{cor:ri-faces-propm}.
\par
\begin{Cor}[Dubins]\label{cor:ri-faces-propm}
Let $\lambda,\mu\in\cP$. The following statements are equivalent.
\begin{enumerate}
\item
The measure $\lambda$ lies in $\rai(F_\cP(\mu))$.
\item
There is $c\geq 1$ such that $\mu(A)/c\leq\lambda(A)\leq c\,\mu(A)$ 
for all $A\in\cA$.
\item
We have $\lambda\equiv\mu$ and there are $c_1,c_2\in[1,\infty)$ 
such that $\frac{\dr\lambda}{\dr\mu}\leq c_1$ holds $\mu$-a.s.\
and $\frac{\dr\mu}{\dr\lambda}\leq c_2$ holds $\lambda$-a.s..
\item
We have $\lambda\ll\mu$ and there is $c\in[1,\infty)$ such that
$\frac{1}{c}\leq \frac{\dr\lambda}{\dr\mu}\leq c$ 
holds $\mu$-a.s..
\end{enumerate}
\end{Cor}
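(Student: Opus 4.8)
The plan is to reduce the four-way equivalence to the membership criterion already established in Theorem~\ref{thm:faces-propm} by exploiting the symmetry between $\lambda$ and $\mu$. The starting observation is that, by the equivalence of assertions~1) and~4) in Corollary~\ref{cor:char-ri-Fx} applied with $x=\lambda$ and $y=\mu$, statement~1) of the present corollary holds if and only if both $\lambda\in F_\cP(\mu)$ and $\mu\in F_\cP(\lambda)$. Each of these two memberships is governed by Theorem~\ref{thm:faces-propm}, once in the given orientation and once with the roles of $\lambda$ and $\mu$ interchanged, so the whole corollary becomes a matter of combining two copies of that theorem.

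First I would prove 1)~$\Leftrightarrow$~2). By the equivalence of 1) and 2) in Theorem~\ref{thm:faces-propm}, the membership $\lambda\in F_\cP(\mu)$ is equivalent to the existence of $c_1\in[1,\infty)$ with $\lambda(A)\leq c_1\,\mu(A)$ for all $A\in\cA$, and symmetrically $\mu\in F_\cP(\lambda)$ is equivalent to the existence of $c_2\in[1,\infty)$ with $\mu(A)\leq c_2\,\lambda(A)$ for all $A\in\cA$. Setting $c=\max(c_1,c_2)$ turns the conjunction of these inequalities into the two-sided estimate of~2), and conversely~2) supplies both inequalities with $c_1=c_2=c$. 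This establishes 1)~$\Leftrightarrow$~2).

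Next I would prove 1)~$\Leftrightarrow$~3) in the same fashion, now invoking the equivalence of 1) and 3) in Theorem~\ref{thm:faces-propm}: the membership $\lambda\in F_\cP(\mu)$ means $\lambda\ll\mu$ together with $\frac{\dr\lambda}{\dr\mu}\leq c_1$ $\mu$-a.s., and $\mu\in F_\cP(\lambda)$ means $\mu\ll\lambda$ together with $\frac{\dr\mu}{\dr\lambda}\leq c_2$ $\lambda$-a.s. Their conjunction is precisely the condition $\lambda\equiv\mu$ with the two bounds appearing in~3), so 1)~$\Leftrightarrow$~3).

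Finally I would settle 3)~$\Leftrightarrow$~4), which I expect to be the main obstacle, since it is the only place where the asymmetric formulation of~4) must be reconciled with the symmetric formulation of~3), and the only place requiring genuine bookkeeping with Radon-Nikodym derivatives and their null sets. When $\lambda\equiv\mu$ one has the reciprocal relation $\frac{\dr\mu}{\dr\lambda}=\bigl(\frac{\dr\lambda}{\dr\mu}\bigr)^{-1}$ in the appropriate almost-sure sense, so the bound $\frac{\dr\mu}{\dr\lambda}\leq c_2$ $\lambda$-a.s.\ translates into $\frac{\dr\lambda}{\dr\mu}\geq 1/c_2$ $\mu$-a.s., which together with $\frac{\dr\lambda}{\dr\mu}\leq c_1$ $\mu$-a.s.\ yields~4) with $c=\max(c_1,c_2)$. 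For the converse I would note that the lower bound $\frac{\dr\lambda}{\dr\mu}\geq 1/c$ $\mu$-a.s.\ forces every $\lambda$-null set $N$ to satisfy $\frac{1}{c}\mu(N)\leq\int_N\frac{\dr\lambda}{\dr\mu}\,\dr\mu=\lambda(N)=0$, hence $\mu(N)=0$; thus $\lambda\ll\mu$ in~4) already entails $\mu\ll\lambda$, so $\lambda\equiv\mu$, and the reciprocal relation returns the two bounds of~3). As an alternative to the whole argument, Corollary~\ref{cor:Dubins21} identifies $\rai(F_\cP(\mu))$ with the d-face of $\cP$ generated by $\mu$, so Dubins' own criterion could be quoted directly to obtain~2); I prefer the symmetry route above because it keeps the proof self-contained within the present development.
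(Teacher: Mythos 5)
Your proposal is correct and follows essentially the same route as the paper: reduce 1) to the symmetric condition $\lambda\in F_\cP(\mu)$ and $\mu\in F_\cP(\lambda)$ via Coro.~\ref{cor:char-ri-Fx}, obtain 2) and 3) from two applications of Thm.~\ref{thm:faces-propm}, and handle 3)\,$\Leftrightarrow$\,4) by the almost-sure reciprocal relation of the Radon--Nikodym derivatives in one direction and the integral lower bound $\lambda(A)\geq\mu(A)/c$ in the other. The only cosmetic difference is that you verify $\mu\ll\lambda$ by testing $\lambda$-null sets rather than stating the inequality for all $A\in\cA$ at once.
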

\begin{proof}
Coro.~\ref{cor:char-ri-Fx} shows that $\lambda\in\rai(F_\cP(\mu))$ is equivalent 
to $\lambda\in F_\cP(\mu)$ and $\mu\in F_\cP(\lambda)$, so the equivalences  
1)~$\Leftrightarrow$~2)~$\Leftrightarrow$~3) follow from those of 
Thm.~\ref{thm:faces-propm}. 
\par
Note that $\lambda$-a.s.\ is the same as $\mu$-a.s.\ if  $\lambda\equiv\mu$. 
Hence, 3) implies that
$\frac{\dr\mu}{\dr\lambda}
\cdot\frac{\dr\lambda}{\dr\mu}=1$
and hence
$\frac{\dr\lambda}{\dr\mu}
=(\frac{\dr\mu}{\dr\lambda})^{-1}\geq1/c_2$
holds $\mu$-a.s., see for example \cite[Thm.~A, p.~133]{Halmos1974}.
Conversely, if $\lambda\ll\mu$ and if there is $c\in[1,\infty)$ such 
that $\frac{1}{c}\leq \frac{\dr\lambda}{\dr\mu}$ holds $\mu$-a.s., 
then
\[\textstyle
\lambda(A)=\int_A \frac{\dr\lambda}{\dr\mu}\,\dr\mu
\geq\frac{1}{c}\,\mu(A)
\quad 
\text{for all $A\in\cA$}
\]
implies that $\mu\equiv\lambda$ and that
$\frac{\dr\mu}{\dr\lambda}=
(\frac{\dr\lambda}{\dr\mu})^{-1}
\leq c$ holds $\lambda$-a.s..
\end{proof}
%
%
%
\section{Examples 2: Convex cores}
\label{sec:example-convex-cores}
In a second example, we consider the Borel $\sigma$-algebra $\cB(d)$ of 
$\R^d$. The \emph{convex core} $\cc(\mu)$ of $\mu\in\cP=\cP(\R^d,\cB(d))$ is 
the intersection of all convex sets $C\in\cB(d)$ of full measure 
$\mu(C)=\mu(\R^d)$. The convex core was introduced in \cite{CsiszarMatus2001} 
to extend exponential families in a natural way, such that information 
projections become properly defined. The \emph{mean} of $\mu$ is the integral 
$m(\mu)=\int_{\R^d}x\dr\mu(x)\in\R^d$, provided that each coordinate function 
is $\mu$-integrable; otherwise, $\mu$ does not have a mean.
\par
\begin{Thm}[Csiszár and Matú\v{s}]\label{thm:csiszar-matus}
Let $\mu\in\cP(\R^d,\cB(d))$ have a mean. Then the convex core of $\mu$ equals
$\cc(\mu)=m\left(\left\{\lambda\in\cP\colon\lambda\ll\mu\right\}\right)$.
Moreover, to each $a\in\cc(\mu)$ there exists $\lambda\in\cP$ with 
$\lambda\ll\mu$ and mean $m(\lambda)=a$ such that $\frac{\dr\lambda}{\dr\mu}$ 
is bounded $\mu$-a.s..
\end{Thm}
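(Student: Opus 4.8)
The plan is to prove the sharper identity $\cc(\mu)=m(F_\cP(\mu))=m(D(\mu))$, where $D(\mu)=\{\lambda\in\cP:\lambda\ll\mu\}$ and, by Thm.~\ref{thm:faces-propm}, $F_\cP(\mu)=\{\lambda\ll\mu:\dr\lambda/\dr\mu\text{ bounded }\mu\text{-a.s.}\}$; throughout I read $m$ as a partial map defined on those measures that possess a mean. Since $F_\cP(\mu)\subset D(\mu)$ and every bounded-density measure has a mean, the inclusion $m(F_\cP(\mu))\subset m(D(\mu))$ is immediate, so it suffices to prove $m(D(\mu))\subset\cc(\mu)$ together with the ``moreover'' inclusion $\cc(\mu)\subset m(F_\cP(\mu))$.

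First I would record the elementary fact that the mean of a probability measure concentrated on a convex Borel set lies in that set: if $\lambda$ has mean $a$ and $\lambda(C)=1$ for a convex $C$, then $a\in C$. Strict separation rules out $a\notin\overline{C}$; and if $a\in\overline{C}\setminus C$, a supporting hyperplane $H=\{x:\langle v,x\rangle=\langle v,a\rangle\}$ at $a$ makes the nonnegative function $\langle v,\cdot-a\rangle$ vanish $\lambda$-almost surely, so $\lambda$ concentrates on the lower-dimensional convex set $C\cap H$ and the claim follows by induction on the dimension. Applying this to any $\lambda\ll\mu$ with a mean and to an arbitrary convex Borel set $C$ with $\mu(C)=1$---so that $\lambda(C)=1$---yields $m(\lambda)\in C$, hence $m(\lambda)\in\cc(\mu)$. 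This gives $m(D(\mu))\subset\cc(\mu)$, and in particular $M:=m(F_\cP(\mu))\subset\cc(\mu)$.

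The core of the argument is a support-function computation. The set $M$ is convex because $m$ is affine on the face $F_\cP(\mu)$. For each $v\in\R^d$, concentrating bounded densities on the sets $\{\langle v,\cdot\rangle>t\}$ shows that the support function of $M$ equals $s(v):=\operatorname{ess\,sup}_\mu\langle v,\cdot\rangle$, while the closed half-space $\{\langle v,\cdot\rangle\leq s(v)\}$ is a convex Borel set of full measure and therefore contains $\cc(\mu)$; hence $\cc(\mu)$ has the same support function $s$. Consequently $M$ and $\cc(\mu)$ have the same closure, and since a convex set in $\R^d$ and its closure share the same relative interior \cite[Thm.~6.3]{Rockafellar1970}, we get $\ri(M)=\ri(\cc(\mu))$. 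In particular, every $a\in\ri(\cc(\mu))$ already lies in $M$ and is thus the mean of a bounded-density measure.

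The remaining and most delicate case is a relative boundary point $a\in\cc(\mu)\setminus\ri(\cc(\mu))$, which I would handle by induction on $d$ (the base case $d=0$ being trivial). Choose a proper supporting hyperplane $H=\{\langle v,\cdot\rangle=s(v)\}$ of $\cc(\mu)$ at $a$, so that $\langle v,a\rangle=s(v)$. The decisive observation is that $\mu(H)>0$: otherwise the open half-space $\{\langle v,\cdot\rangle<s(v)\}$ would be a convex Borel set of full measure avoiding $a$, contradicting $a\in\cc(\mu)$. One then forms the conditional measure $\mu_H=\mu(H)^{-1}1_H\,\mu$ on the $(d-1)$-dimensional affine space $H$, verifies that $a\in\cc(\mu_H)$---a convex full-measure set of $\mu_H$ in $H$ becomes one of $\mu$ in $\R^d$ after adjoining the open half-space below $H$---and applies the inductive hypothesis to obtain a bounded-density $\lambda\ll\mu_H$ with $m(\lambda)=a$. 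As $\mu_H\ll\mu$ with bounded density, the chain rule makes $\dr\lambda/\dr\mu$ bounded, finishing the induction. I expect the positive-mass fact $\mu(H)>0$ and the transfer $a\in\cc(\mu_H)$ to be the crux, since this is exactly where the convex core---rather than merely the closed convex support---enters.
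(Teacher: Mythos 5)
The paper does not actually prove this theorem: it is imported as an external result, with the remark that it ``is proved in Thm.~3 of \cite{CsiszarMatus2001}''. So there is no in-paper argument to compare against, and your proposal has to stand on its own --- which, as far as I can check, it does. The chain $m(F_\cP(\mu))\subset m(\{\lambda\in\cP:\lambda\ll\mu\})\subset\cc(\mu)\subset m(F_\cP(\mu))$ is established correctly: the preliminary fact that the mean of a measure carried by a convex Borel set lies in that set is sound (your induction on $\dim C$ works because a point of $\overline{C}\setminus C$ is never in $\ri(C)$, so a \emph{proper} supporting hyperplane exists and strictly drops the dimension); the support-function identity $\sup_{a\in M}\langle v,a\rangle=\operatorname{ess\,sup}_\mu\langle v,\cdot\rangle$ holds in both directions, since the truncations $\mu(A_t)^{-1}1_{A_t}\mu$ have bounded densities and inherit a mean from $\mu$; and the two points you flag as the crux --- $\mu(H)>0$ for the supporting hyperplane $H=\{\langle v,\cdot\rangle=s(v)\}$ at a relative boundary point, and the transfer $a\in\cc(\mu_H)$ by adjoining the open lower half-space --- both check out. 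Two small steps deserve an explicit line in a written-up version: the identification of the support level of the chosen hyperplane with $s(v)$ (it follows because the support function of $\cc(\mu)$ has already been computed to equal $s$), and the chain rule $\frac{\dr\lambda}{\dr\mu}=\frac{\dr\lambda}{\dr\mu_H}\cdot\mu(H)^{-1}1_H$ keeping the density bounded. For context, the original proof of Csisz\'ar and Mat\'u\v{s} proceeds through an increasing family of ``accessible'' faces of the convex core; your induction on the ambient dimension through mass-carrying supporting hyperplanes is a legitimate and arguably more direct alternative, at the price of using the finite-dimensional facts $\ri(\overline{C})=\ri(C)$ and the support-function characterization of closures, which are unavailable in the infinite-dimensional setting the rest of the paper is careful about --- harmless here, since the theorem lives in $\R^d$.
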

Thm.~\ref{thm:csiszar-matus} is proved in Thm.~3 of \cite{CsiszarMatus2001}.
We derive from it a description of the relative algebraic interior of the 
convex core.
\par
\begin{Cor}\label{cor:csiszar-matus}
Let $\mu\in\cP(\R^d,\cB(d))$ have a mean. Then $\cc(\mu)=m(F_\cP(\mu))$. 
The relative algebraic interior of $\cc(\mu)$ is 
$\rai(\cc(\mu))=m(\rai(F_\cP(\mu)))$, which equals
\[\textstyle
\rai(\cc(\mu))
=m\left(\left\{\lambda\in\cP\mid
\lambda\equiv\mu, \exists c\in(1,\infty)\colon 
\frac{1}{c}\leq \frac{\dr\lambda}{\dr\mu}\leq c \mbox{~$\mu$-a.s.}
\right\}\right)
\,\text{.}
\]
\end{Cor}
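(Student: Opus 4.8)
The plan is to derive the two displayed equalities from the Csiszár-Matúš theorem (Thm.~\ref{thm:csiszar-matus}), the description of $F_\cP(\mu)$ in Thm.~\ref{thm:faces-propm}, and the affine invariance of relative algebraic interiors in Thm.~\ref{thm:ri-affine-trafo}; the explicit formula then follows by substituting the characterisation of $\rai(F_\cP(\mu))$ from Coro.~\ref{cor:ri-faces-propm}.

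First I would establish $\cc(\mu)=m(F_\cP(\mu))$. By Thm.~\ref{thm:faces-propm} the face $F_\cP(\mu)$ is exactly the set of $\lambda\in\cP$ with $\lambda\ll\mu$ whose Radon-Nikodym derivative $\frac{\dr\lambda}{\dr\mu}$ is bounded $\mu$-a.s. Since every such $\lambda$ satisfies $\lambda\ll\mu$, the inclusion $m(F_\cP(\mu))\subset m(\{\lambda\in\cP:\lambda\ll\mu\})=\cc(\mu)$ is immediate from Thm.~\ref{thm:csiszar-matus}. For the reverse inclusion I would invoke the ``moreover'' clause of Thm.~\ref{thm:csiszar-matus}: to each $a\in\cc(\mu)$ it provides $\lambda\in\cP$ with $\lambda\ll\mu$, $m(\lambda)=a$, and $\frac{\dr\lambda}{\dr\mu}$ bounded $\mu$-a.s., so that $\lambda\in F_\cP(\mu)$ by Thm.~\ref{thm:faces-propm} and hence $a\in m(F_\cP(\mu))$.

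Next I would pass to relative algebraic interiors via the affine-transformation theorem. Let $V_\mu$ be the real vector space of finite signed measures $\nu$ with $\nu\ll\mu$ whose density $\frac{\dr\nu}{\dr\mu}$ is bounded $\mu$-a.s.; since $\mu$ has a mean, each such $\nu$ has one too, so the mean $m:V_\mu\to\R^d$ is a well-defined linear, hence affine, map, and $F_\cP(\mu)$ is a convex subset of $V_\mu$. As $\mu\in\rai(F_\cP(\mu))$ by Thm.~\ref{thm:WeisShirokov}, the set $\rai(F_\cP(\mu))$ is nonempty, so Thm.~\ref{thm:ri-affine-trafo} yields $\rai(m(F_\cP(\mu)))=m(\rai(F_\cP(\mu)))$. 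Combined with the first step this gives $\rai(\cc(\mu))=m(\rai(F_\cP(\mu)))$. Finally, Coro.~\ref{cor:ri-faces-propm} characterises $\rai(F_\cP(\mu))$ as the set of $\lambda\in\cP$ with $\lambda\equiv\mu$ admitting $c\in[1,\infty)$ with $\frac{1}{c}\le\frac{\dr\lambda}{\dr\mu}\le c$ $\mu$-a.s.; substituting this set into the previous equality produces the stated formula, using that $c=1$ forces $\lambda=\mu$ so that one may equivalently take $c\in(1,\infty)$.

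I expect the main obstacle to be the reverse inclusion $\cc(\mu)\subset m(F_\cP(\mu))$, which rests entirely on the ``moreover'' clause of Thm.~\ref{thm:csiszar-matus}: without it one only controls $m$ on the larger set $\{\lambda\ll\mu\}$, whose extra measures with unbounded density could a priori contribute points of $\cc(\mu)$ not attained from within the face. A secondary point requiring care is the choice of the ambient vector space $V_\mu$, ensuring the mean is everywhere defined and affine so that Thm.~\ref{thm:ri-affine-trafo} is legitimately applicable; since $\rai$ depends only on the affine hull of its argument, computing it inside $V_\mu$ agrees with computing it inside the full space of signed measures.
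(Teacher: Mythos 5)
Your proposal is correct and follows essentially the same route as the paper's proof: Thm.~\ref{thm:faces-propm} together with Thm.~\ref{thm:csiszar-matus} (including its ``moreover'' clause) give $\cc(\mu)=m(F_\cP(\mu))$, then Thm.~\ref{thm:WeisShirokov} and Thm.~\ref{thm:ri-affine-trafo} give $\rai(\cc(\mu))=m(\rai(F_\cP(\mu)))$, and Coro.~\ref{cor:ri-faces-propm} supplies the explicit description. Your added care about the ambient vector space on which $m$ is affine, and about replacing $c\in[1,\infty)$ by $c\in(1,\infty)$, only fills in details the paper leaves implicit.
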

\begin{proof}
Thm.~\ref{thm:faces-propm} and Thm.~\ref{thm:csiszar-matus} show that
$\cc(\mu)=m(F_\cP(\mu))$. As $\mu$ lies in the relative algebraic interior 
of $F_\cP(\mu)$ by Thm.~\ref{thm:WeisShirokov}, we obtain
\[
\rai(\cc(\mu))=m(\rai(F_\cP(\mu)))
\]
from Thm.~\ref{thm:ri-affine-trafo}. Coro.~\ref{cor:ri-faces-propm}
completes the proof.
\end{proof}
The characterization of $\rai(\cc(\mu))$ in Coro.~\ref{cor:csiszar-matus} is 
somewhat stronger than that in Lemma~5 of \cite{CsiszarMatus2001}, which 
ignores the lower bound $0<\frac{1}{c}\leq \frac{\dr\lambda}{\dr\mu}$ $\mu$-a.s..
Lemma~5 of \cite{CsiszarMatus2001} also shows 
$\rai(\cc(\mu))=m\left(\left\{\lambda\in\cP\colon\lambda\equiv\mu\right\}\right)$,
which cannot be deduced from Thm.~\ref{thm:csiszar-matus} with the methods 
developed in this paper, without the assistance of other methods.
\par
%
%
\section{Examples 3: Discrete probability measures}
\label{sec:example-countable}
In a third example, we consider the discrete $\sigma$-algebra $2^\N$ of all 
subsets of $\N$. Consider the Banach space 
$\ell^1=\{x:\N\to\C\mid \|x\|_1<\infty\}$ of ab\-so\-lute\-ly summable 
sequences endowed with the $\ell^1$-norm $\|x\|_1=\sum_{n=1}^\infty|x(n)|$. 
We study $\cP=\cP(\N,2^\N)$ in terms of the set of probability mass functions 
\[\textstyle
\Delta_\N=\left\{p:\N\to\R\mid \forall n\in\N\colon p(n)\geq 0 
\mbox{~and~}\|p\|_1=1\right\}\,\text{.}
\]
The map $\cP\to\Delta_\N$ that maps a probability measure $\mu\in\cP$ to its 
Radon-Nikodym derivative $\frac{\dr\mu}{\dr\nu}$ with respect to the counting 
measure $\nu$, is an affine isomorphism. We endow $\cP$ with the 
\emph{distance in variation} 
\[\textstyle
\|\mu-\lambda\|
:=2\sup_{A\subset\N}|\mu(A)-\lambda(A)|
\,\text{,}
\quad 
\lambda,\mu\in\cP
\,\text{.}
\]
Then $\cP\to\Delta_\N$ is an isometry, as
$\|\mu-\lambda\|=\|\frac{\dr\mu}{\dr\nu}-\frac{\dr\lambda}{\dr\nu}\|_1$ holds 
\cite[Sec.~3.9]{Shiryaev2016}. 
The \emph{support} of $p\in\Delta_\N$ is $\spt(p):=\{n\in\N\colon p(n)>0\}$.
Lemma~\ref{lem:faces-pmf} is proved in \cite[Lemma 2.12]{WeisShirokov2021},
and follows also from Thm.~\ref{thm:faces-propm} and 
Coro.~\ref{cor:ri-faces-propm}. 
\par
\begin{Lem}\label{lem:faces-pmf}
For every $p\in\Delta_\N$ we have
\begin{align*}\textstyle
F_{\Delta_\N}(p) &= \textstyle
\left\{q\in\Delta_{\spt(p)}\colon \sup_{n\in\spt(p)}q(n)/p(n)<\infty\right\}
\,\text{,}\\
\rai(F_{\Delta_\N}(p)) &= \textstyle
\left\{q\in F_{\Delta_\N}(p)\colon \inf_{n\in\spt(p)}q(n)/p(n)>0\right\}
\,\text{.}
\end{align*}
\end{Lem}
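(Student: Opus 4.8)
The plan is to transport the measure-theoretic characterizations of Theorem~\ref{thm:faces-propm} and Corollary~\ref{cor:ri-faces-propm} to the discrete setting through the affine isomorphism $\cP\to\Delta_\N$, $\mu\mapsto\frac{\dr\mu}{\dr\nu}$, where $\nu$ denotes the counting measure. Under this isomorphism a probability measure $\mu\in\cP$ corresponds to $p=\frac{\dr\mu}{\dr\nu}$ and a measure $\lambda\in\cP$ corresponds to $q=\frac{\dr\lambda}{\dr\nu}$. Because the isomorphism is affine, it carries faces to faces and relative algebraic interiors to relative algebraic interiors; hence it suffices to describe $F_\cP(\mu)$ and $\rai(F_\cP(\mu))$ and then read off the resulting conditions on $p$ and $q$.

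First I would translate the absolute-continuity and the boundedness conditions. Since $\mu$ is purely atomic with atoms exactly the points of $\spt(p)$, a $\mu$-null set is any subset of $\N$ disjoint from $\spt(p)$; therefore $\lambda\ll\mu$ is equivalent to $q(n)=0$ for all $n\notin\spt(p)$, that is, $q\in\Delta_{\spt(p)}$. On $\spt(p)$ the Radon-Nikodym derivative is $\frac{\dr\lambda}{\dr\mu}(n)=q(n)/p(n)$, and a $\mu$-almost-sure statement is a statement holding at every point of $\spt(p)$, as each such point carries positive $\mu$-mass. Consequently the existence of $c\in[1,\infty)$ with $\frac{\dr\lambda}{\dr\mu}\leq c$ $\mu$-a.s.\ is equivalent to $\sup_{n\in\spt(p)}q(n)/p(n)<\infty$. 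Substituting these two equivalences into part~3) of Theorem~\ref{thm:faces-propm} yields the asserted formula for $F_{\Delta_\N}(p)$.

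For the relative algebraic interior I would invoke part~4) of Corollary~\ref{cor:ri-faces-propm}: a point lies in $\rai(F_\cP(\mu))$ if and only if $\lambda\ll\mu$ and there is $c\in[1,\infty)$ with $\frac{1}{c}\leq\frac{\dr\lambda}{\dr\mu}\leq c$ $\mu$-a.s. The upper bound again expresses membership of $q$ in $F_{\Delta_\N}(p)$, while the lower bound $\frac{1}{c}\leq q(n)/p(n)$ for all $n\in\spt(p)$ is equivalent to $\inf_{n\in\spt(p)}q(n)/p(n)\geq\frac{1}{c}>0$. Existence of such a finite $c$ is therefore equivalent to a strictly positive infimum, which gives the claimed description of $\rai(F_{\Delta_\N}(p))$.

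The argument is essentially a dictionary between the two pictures, so no step poses a genuine difficulty; the only point deserving care is the handling of the almost-sure qualifiers. One must observe that, because $\mu$ charges every point of $\spt(p)$ positively, the phrase ``$\mu$-a.s.''\ over this countable space collapses to a pointwise condition on $\spt(p)$, whence the existential constants become a finite supremum and a positive infimum respectively. I would also note in passing that a positive infimum forces $q(n)>0$ throughout $\spt(p)$, so that $\spt(q)=\spt(p)$, in accordance with the equivalence $\lambda\equiv\mu$ recorded in part~3) of Corollary~\ref{cor:ri-faces-propm}.
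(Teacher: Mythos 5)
Your argument is correct and is precisely the route the paper itself indicates: the text states that Lemma~\ref{lem:faces-pmf} ``follows also from Thm.~\ref{thm:faces-propm} and Coro.~\ref{cor:ri-faces-propm}'' (the primary reference being Lemma~2.12 of \cite{WeisShirokov2021}), and your translation through the affine isomorphism $\cP\to\Delta_\N$, with the $\mu$-a.s.\ qualifiers collapsing to pointwise conditions on $\spt(p)$, is exactly that derivation. No gaps.
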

\begin{Exa}[Faces with empty relative algebraic interiors]\label{ex:faceI}
For all subsets $I\subset\N$ we define
\[
\Delta_I :=\{p\in\Delta_\N\colon \spt(p)\subset I\}
\quad\text{and}\quad
\Delta_{I,\text{fin}} :=\{p\in\Delta_I\colon \nu(\spt(p))<\infty\}
\,\text{.}
\]
The sets $\Delta_I$ and $\Delta_{I,\text{fin}}$ are faces of $\Delta_\N$,
see \cite[Example 2.10]{WeisShirokov2021}. If $I$ is an infinite set, then 
$\rai(\Delta_I)=\rai(\Delta_{I,\text{fin}})=\emptyset$ holds by 
Lemma~\ref{lem:relint}.3, as each of the faces $\Delta_I$ and 
$\Delta_{I,\text{fin}}$ is strictly larger than the face generated by any 
of its points $p$. This is clear if $J:=\spt(p)$ is finite. Otherwise, 
if $J$ is infinite, let $p_H(n):=p(n)/(\sqrt{r_n}+\sqrt{r_{n+1}})$ for 
$n\in\N$, where $r_n:=\sum_{m\geq n}p(m)$. Then $p_H\in\Delta_J\subset\Delta_I$ 
but $p_H\not\in F_{\Delta_\N}(p)$ holds by Lemma~\ref{lem:faces-pmf}.
\end{Exa}
In \cite[Sec.~2]{WeisShirokov2021}, we raise the question as to whether 
$\Delta_\N$ has other faces with empty relative algebraic interiors, aside 
from those described in Example~\ref{ex:faceI}. 
Example~\ref{exa:infinite-chain} shows the answer is yes. 
\par
\begin{Exa}[Another face with empty relative algebraic interior]%
\label{exa:infinite-chain}
For every $s>1$, let $p_s:\N\to\R$, $n\mapsto\zeta(s)^{-1}\cdot n^{-s}$, where 
$\zeta(s)=\sum_{n\in\N}n^{-s}$ is the Euler-Riemann zeta function. For all
$s,t>1$, Lemma~\ref{lem:faces-pmf} shows that the probability mass function 
$p_s$ is included in $F_{\Delta_\N}(p_t)$ if and only if $s\geq t$. Hence, 
Lemma~\ref{lem:relint}.2 proves
\[
F_{\Delta_\N}(p_s) \subset F_{\Delta_\N}(p_t)
\iff
s\geq t
\,\text{,}
\qquad s,t>1
\,\text{.}
\]
By Lemma~\ref{lem:infinite-chain} below, for every $t\geq 1$,
the union
\[\textstyle
F_t:=\bigcup_{s>t}F_{\Delta_\N}(p_s)
\]
is a face of $\Delta_\N$ and $\rai(F_t)=\emptyset$. Let $t>1$. Then $F_t$ 
is included in $F_{\Delta_\N}(p_t)$, which is properly included in 
$\Delta_\N$ by Ex.~\ref{ex:faceI}. We also have $F_t\neq\Delta_{I,\text{fin}}$ 
and $F_t\neq\Delta_I$ for all $I\subset\N$, because $F_t$ contains the point 
$p_{t+1}$ of support $\N$.
\end{Exa}
\begin{Lem}\label{lem:infinite-chain}
Let $\{x_\alpha\}_{\alpha\in A}$ be a set of points in a convex set $K$ 
indexed by a totally ordered set $A$ that has no greatest element, 
such that $\alpha\leq\beta$ if and only if 
$F_K(x_\alpha)\subset F_K(x_\beta)$ holds for all $\alpha,\beta\in A$. 
Then $F=\bigcup_{\alpha\in A}F_K(x_\alpha)$ is a face of $K$ and 
$\rai(F)=\emptyset$.
\end{Lem}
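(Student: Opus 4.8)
The plan is to prove the two assertions separately, first that $F$ is a face of $K$, then that $\rai(F)=\emptyset$, and to draw on Lemma~\ref{lem:relint}.2 and Lemma~\ref{lem:relint}.3 together with the hypothesis that $A$ has no greatest element.

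For the first assertion I would note that each $F_K(x_\alpha)$ is a face, hence an extreme set of $K$, and that any union of extreme sets is an extreme set; so $F$ is automatically an extreme set. It then remains only to check convexity, and here the point is that the family $\{F_K(x_\alpha)\}_{\alpha\in A}$ is a chain under inclusion. Concretely, given $a,b\in F$, I choose indices with $a\in F_K(x_\alpha)$ and $b\in F_K(x_\beta)$; since $A$ is totally ordered I may assume $\alpha\leq\beta$, so $F_K(x_\alpha)\subset F_K(x_\beta)$ and both points lie in the convex set $F_K(x_\beta)\subset F$, whence $[a,b]\subset F$. Thus $F$ is a convex extreme set, i.e.\ a face.

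For the second assertion I would argue by contradiction. Suppose $\rai(F)\neq\emptyset$ and fix $x\in\rai(F)$. By Lemma~\ref{lem:relint}.3 this gives $F=F_K(x)$. Since $x\in F$, there is $\gamma\in A$ with $x\in F_K(x_\gamma)$, and Lemma~\ref{lem:relint}.2 then yields $F_K(x)\subset F_K(x_\gamma)$; as $F_K(x_\gamma)\subset F$ by the definition of the union, I conclude $F=F_K(x_\gamma)$. The decisive step is now to exploit that $A$ has no greatest element: this provides $\delta\in A$ with $\gamma<\delta$. On the one hand $F_K(x_\delta)\subset F=F_K(x_\gamma)$; on the other hand the defining equivalence $\alpha\leq\beta\iff F_K(x_\alpha)\subset F_K(x_\beta)$ converts $F_K(x_\delta)\subset F_K(x_\gamma)$ into $\delta\leq\gamma$, which contradicts $\gamma<\delta$ by antisymmetry. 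This contradiction forces $\rai(F)=\emptyset$.

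I expect no computational obstacles. The only genuinely subtle point is recognizing that a single point of $\rai(F)$ collapses the whole chain-union onto one member $F_K(x_\gamma)$, which is where Lemma~\ref{lem:relint}.2 and Lemma~\ref{lem:relint}.3 do the work; once that collapse is in hand, the absence of a greatest element immediately produces a strictly larger face sitting inside $F$. Care is needed only in applying the stated equivalence in the direction that turns an inclusion of faces back into an inequality of indices.
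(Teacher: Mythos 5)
Your proposal is correct and follows essentially the same route as the paper: convexity of $F$ via the chain property, extremality via the union of extreme sets, and emptiness of $\rai(F)$ by collapsing $F$ onto a single $F_K(x_\gamma)$ and contradicting the absence of a greatest element. The only cosmetic difference is that you invoke Lemma~\ref{lem:relint}.2 and Lemma~\ref{lem:relint}.3 where the paper uses Lemma~\ref{lem:relint}.1 directly; both yield the same collapse.
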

\begin{proof}
The set $F$ is convex. If $a,b\in F$, then $a\in F_K(x_\alpha)$ and 
$b\in F_K(x_\beta)$ for some $\alpha,\beta\in A$. Both points $x_\alpha$
and $x_\beta$ lie in $F_K(x_{\max(\alpha,\beta)})$, hence the closed 
segment with endpoints $a,b$ lies in $F_K(x_{\max(\alpha,\beta)})\subset F$. 
The set $F$ is an extreme set. If the open segment with endpoints $a\neq b$ 
in $K$ intersects $F$, then it intersects $F_K(x_\alpha)$ for some 
$\alpha\in A$. It follows that $a,b\in F_K(x_\alpha)\subset F$.
\par
Assume there is $a\in\rai(F)$. Since $a\in F$, there is $\alpha\in A$ 
with $a\in F_K(x_\alpha)$. Lemma~\ref{lem:relint}.1 implies 
$F\subset F_K(x_\alpha)$, which shows that $\alpha$ is the greatest 
element of $A$. This is excluded from the assumptions.
\end{proof}
Whereas $\Delta_\N$ is closed in the $\ell^1$-norm \cite{Werner2018}, the 
face $F_{\Delta_\N}(p)$ is not closed for any $p\in\Delta_\N$ of infinite 
support. Indeed, Lemma~\ref{lem:norm-closed-faces} shows that the closure 
of $F_{\Delta_\N}(p)$ is $\Delta_{\spt(p)}$ whereas $F_{\Delta_\N}(p)$ is 
strictly included in $\Delta_{\spt(p)}$ by Ex.~\ref{ex:faceI}. Let the 
function $e_n\in\ell^1$ be defined by $e_n(m)=1$ if $n=m$ and $e_n(m)=0$ 
otherwise, $m,n\in\N$.
\par
\begin{Lem}\label{lem:norm-closed-faces}
The closure of any face $F$ of $\Delta_\N$ in the $\ell^1$-norm is 
$\Delta_{I(F)}$, where $I(F):=\bigcup_{p\in F}\spt(p)$. For every 
$p\in\Delta_\N$ we have $I(F_{\Delta_\N}(p))=\spt(p)$.
\end{Lem}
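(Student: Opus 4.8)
The plan is to prove both assertions by reducing everything to the point masses $e_n$ together with the explicit description of $F_{\Delta_\N}(p)$ in Lemma~\ref{lem:faces-pmf}. I would begin with the second, easier, statement. For $p\in\Delta_\N$, Lemma~\ref{lem:faces-pmf} shows that every $q\in F_{\Delta_\N}(p)$ satisfies $\spt(q)\subset\spt(p)$, so $I(F_{\Delta_\N}(p))=\bigcup_{q\in F_{\Delta_\N}(p)}\spt(q)\subset\spt(p)$; the reverse inclusion is immediate from $p\in F_{\Delta_\N}(p)$. Hence $I(F_{\Delta_\N}(p))=\spt(p)$.

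For the first statement I would prove the two inclusions of $\overline{F}=\Delta_{I(F)}$ separately. The inclusion $\overline{F}\subset\Delta_{I(F)}$ is the soft one: by definition of $I(F)$ every $p\in F$ has $\spt(p)\subset I(F)$, so $F\subset\Delta_{I(F)}$; since each evaluation $p\mapsto p(n)$ is $\ell^1$-continuous, the set $\Delta_{I(F)}=\{p\in\Delta_\N : p(n)=0 \text{ for } n\notin I(F)\}$ is $\ell^1$-closed, and passing to closures gives the inclusion.

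The substantive inclusion is $\Delta_{I(F)}\subset\overline{F}$, and the key observation, which I expect to be the crux, is that the point mass $e_n$ already lies in $F$ itself (not merely in its closure) for every $n\in I(F)$. Indeed, fix $n\in I(F)$ and choose $p\in F$ with $p(n)>0$. Then $F_{\Delta_\N}(p)\subset F$ by Lemma~\ref{lem:relint}.2, and $e_n$ meets the two membership conditions of Lemma~\ref{lem:faces-pmf}: its support $\{n\}$ is contained in $\spt(p)$, and $\sup_{m\in\spt(p)}e_n(m)/p(m)=1/p(n)<\infty$. Hence $e_n\in F$.

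Once this is established, the remainder is a routine truncation argument: the set $\overline{F}$ is convex and $\ell^1$-closed, the finitely supported normalized truncations of any $q\in\Delta_{I(F)}$ are convex combinations of the $e_n$ with $n\in I(F)$ and therefore lie in $\overline{F}$, and these truncations converge to $q$ in $\ell^1$-norm; thus $q\in\overline{F}$. Combining the first assertion with the second yields $\overline{F_{\Delta_\N}(p)}=\Delta_{\spt(p)}$, matching the discussion preceding the lemma.
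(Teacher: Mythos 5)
Your proof is correct and follows essentially the same route as the paper: the crux in both is that $e_n\in F_{\Delta_\N}(p)\subset F$ for every $n\in I(F)$ (via Lemma~\ref{lem:faces-pmf} and Lemma~\ref{lem:relint}.2), combined with an $\ell^1$-truncation/density argument showing that finitely supported elements of $\Delta_{I(F)}$ lie in $F$ and are dense in $\Delta_{I(F)}$. Your renormalized truncations versus the paper's device of dumping the tail mass at position $k$ is an immaterial difference.
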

\begin{proof}
First, the face 
$\Delta_{I,\text{fin}}$ of functions with finite support is dense in 
$\Delta_I$ for all $I\subset\N$. To see this, let $I=\N$ (without loss 
of generality) and let $p\in\Delta_\N$. Then 
$(p_k)_{k\in\N}\subset\Delta_{\N,\text{fin}}$, defined by
\[
p_k(n)=
\left\{\begin{array}{ll}
p(n) & \text{if $n<k$,}\\
\sum_{m\geq k}p(m) & \text{if $n=k$,}\\
0 & \text{else,}
\end{array}\right.
\quad k,n\in\N
\,\text{,}
\]
converges to $p$, as $\|p-p_k\|_1=2\sum_{m>k}p(m)$ for all $k\in\N$.
\par
Second, if $F$ is a face of $\Delta_\N$ then 
$\Delta_{I(F),\text{fin}}\subset F\subset\Delta_{I(F)}$ holds. The right 
inclusion is obvious. To see the left inclusion, let $n\in I(F)$, and let 
$p\in F$ such that $n\in\spt(p)$. By Lemma~\ref{lem:faces-pmf}, we have 
$e_n\in F_{\Delta_\N}(p)$. Then $e_n\in F$ follows from 
Lemma~\ref{lem:relint}.2. This implies $\Delta_{I(F),\text{fin}}\subset F$ 
as $F$ is convex.
\par
The preceding two arguments prove the first assertion. The second assertion 
is a special case of the first one and follows from Lemma~\ref{lem:faces-pmf}.
\end{proof}
Lemma~\ref{lem:norm-closed-faces} shows that the norm closed
faces of $\Delta_\N$ are in a one-to-one correspondence with the subsets of 
$\N$. This assertion is a special case of a more general property of von Neumann 
algebras \cite{AlfsenShultz2001}. The space $\ell^1$ is the predual of the 
von Neumann algebra 
\[\textstyle
\ell^\infty=\{x:\N\to\C\mid \sup_{n\in\N}|x(n)|<\infty\}
\,\text{.}
\]
The set $\Delta_\N\subset\ell^1$ is the \emph{normal state space} of $\ell^\infty$, 
and the subsets $I$ of $\N$ are in a one-to-one correspondence with the projections 
in $\ell^\infty$, that is to say, functions $\N\to\{0,1\}$. In a general von 
Neumann algebra, there is an order preserving isomorphism between the norm closed 
faces of the normal state space and the projections in the algebra 
\cite[Thm.~3.35]{AlfsenShultz2001}. 
\par
%
%
\subsection*{Acknowledgements} 
I thank Didier Henrion, Martin Kru\v{z}ík, Milan Korda, Milan Studený,
and Tobias Fritz for inspiring discussions. This work was co-funded 
by the European Union under the project ROBOPROX 
(reg. no. CZ.02.01.01/00/22\_008/0004590).
\par
%
%
\bibliographystyle{plain}

%
%
%
\vspace{\baselineskip}
\parbox{10cm}{%
Stephan Weis\\
Czech Technical University in Prague\\ 
Faculty of Electrical Engineering\\
Karlovo nám\v{e}stí 13\\
12000, Prague 2\\
Czech Republic\\
e-mail \texttt{maths@weis-stephan.de}}
\end{document}